\DeclareFontFamily{U}{mathx}{}
\DeclareFontShape{U}{mathx}{m}{n}{<-> mathx10}{}
\DeclareSymbolFont{mathx}{U}{mathx}{m}{n}
\DeclareMathAccent{\widehat}{0}{mathx}{"70}
\DeclareMathAccent{\widecheck}{0}{mathx}{"71}
\DeclareMathOperator{\Ima}{Im}
\def\wt#1{\widetilde{#1}}
\newcommand{\C}{\mathbb{C}}
\newcommand{\N}{\mathbb{N}}
\newcommand{\R}{\mathbb{R}}
\newcommand{\Z}{\mathbb{Z}}
\newcommand{\supp}{\operatorname{supp}}
\newcommand{\dist}{\operatorname{dist}}
\newtheorem{thm}{Theorem}[section]
\newtheorem{defn}[thm]{Definition}
\newtheorem{prop}[thm]{Proposition}
\newtheorem{cor}[thm]{Corollary}
\newtheorem{lem}[thm]{Lemma}
\numberwithin{equation}{section}
\newtheorem{rem}{Remark}
\newcommand{\inp}[2]{\langle #1, #2\rangle}
\newcommand{\Be}{\begin{equation}}
\newcommand{\Ee}{\end{equation}}
\newcommand{\newparallel}{\mathrel{\mathpalette\new@parallel\relax}}
\newcommand{\new@parallel}[2]{%
  \begingroup
  \sbox\z@{$#1T$}% get the height of an uppercase letter
  \resizebox{!}{\ht\z@}{\raisebox{\depth}{$\m@th#1/\mkern-5mu/$}}%
  \endgroup
}
\definecolor{jrcol}{rgb}{0,0,1.} % Blue
\definecolor{swcol}{rgb}{1.,0,0} % Red
\newcommand{\eps}{\varepsilon}
\begin{document}

\title[Restriction estimate for quasimodes]{Semiclassical $L^p$ quasimode restriction estimates in two dimensions}

\author[Oh]{Sewook Oh}
\address[Oh]{June E Huh Center for Mathematical Challenges, Korea Institute for Advanced Study, Seoul
02455, Republic of Korea}
\email{sewookoh@kias.re.kr}

\author[Ryu]{Jaehyeon Ryu}
\address[Ryu]{School of Mathematics, Korea Institute for Advanced Study, Seoul
02455, Republic of Korea} 
\email{jhryu@kias.re.kr}

\keywords{eigenfunction restriction estimates, semiclassical analysis}
\thanks{This work was supported by KIAS Individual Grant SP089101 (S. Oh) and MG087001 (J. Ryu).}

\begin{abstract}
    We establish the $L^p$ restriction estimates for quasimodes on a smooth curve in two dimensions. Our estimates are sharp for all smooth curves. As an application, we address $L^p$ eigenfunction restriction estimates for Laplace-Beltrami eigenfunctions on $2$-dimensional compact Riemannian manifolds without boundary and Hermite functions on $\R^2$. Our method involves a geometric analysis of the contact order between the curve and the bicharacteristic flow of the semiclassical pseudodifferential operator.
\end{abstract}

\maketitle

\section{Introduction}\label{sec:intro}

Let $M$ be a smooth Riemannian manifold without boundary and $P(h)$ be a semiclassical pseudodifferential operator on $M$ depending on the small parameter $h\in (0,h_0]$. Let $p(x,\xi)$ be a principal symbol of $P(h)$. Here, we use the Weyl quantization.
We call a family of functions $f=f(h)$ an $O(h)$ quasimode if $f(h)$ is a $L^2$ normalized function such that
\begin{align}\label{p:quasimode}
    P(h)f = O_{L^2}(h),\quad h\in (0,h_0].
\end{align}
In this article, we are concerned with measuring how highly $f$ can concentrate on a submanifold in $M$ as $h\to 0$. Specifically, we study the problem of obtaining $L^p$ bounds on $f$ restricted to the submanifold $H$.

This kind of research naturally has a close connection to studying the concentration of eigenfunctions. One way to approach this issue is by investigating semiclassical measures. There are extensive references regarding this line of research; see the works by Anantharaman \cite{Ana08}, Anantharaman-Koch-Nonnenmacher \cite{AKN06}, Anantharaman-Nonnenmacher \cite{AN07}, Colin De Verdiere \cite{Col85}, G\'erard-Leichtnam \cite{GL93}, Helffer-Martinez-Robert \cite{HMR87}, Zelditch \cite{Zel87}, Zelditch-Zworski \cite{ZZ96}. Another approach is to investigate the growth of $L^p$ norms of the eigenfunctions. This subject has been intensively studied since the pioneering work by Sogge \cite{Sog88}. We refer the reader to B\'erard \cite{Ber77}, Blair-Sogge \cite{BS18, BS19}, Canzani-Galkowski \cite{CG20}, Germain \cite{Ger23}, Hassell-Tacy \cite{HT15}, Sogge \cite{Sog86, Sog93, Sog11}, Tacy \cite{Tac18, Tac19, Tac21}, and references therein.

\subsubsection*{$L^p$ quasimode restriction estimates for Laplacian} To present our main result early in the introduction, we initially examine the case in which $M$ is a $d$-dimensional compact Riemannian manifold (without boundary) and $P(h) = -h^2\Delta_g -1$, where $\Delta_g$ is the Laplace-Beltrami operator on $M$. In this setting, the $L^p$ quasimode restriction estimate is equivalent to $L^p$ restriction estimate for the spectral projection $\mathds 1_{[\lambda-1,\lambda+1]}(\sqrt{-\Delta_g})$. Indeed, $\mathds 1_{[\lambda-1,\lambda+1]}(\sqrt{-\Delta_g}) f$ is an $O(h)$ quasimode of $-h^2\Delta_g -1$ for an appropriate $f\in L^2(M)$ when identifying $h=\lambda^{-1}$. This subject has been studied by Burq-G\'erard-Tzvetkov \cite{BGT07}, Hu \cite{Hu09}. The main result by Burq-G\'erard-Tzvetkov \cite{BGT07} can be stated in terms of $\mathds 1_{[\lambda-1,\lambda+1]}(\sqrt{-\Delta_g})$ as follows\footnote{We remark that the estimate \eqref{e:bgt1} is a part of the results of \cite{BGT07} related to hypersurfaces. Indeed, the authors addressed the restriction estimates for general submanifolds of arbitrary codimensions.}: For every $\lambda\ge 1$ and smooth hypersurface $H$ of $M$,
\begin{align}\label{e:bgt1}
    \|\mathds 1_{[\lambda-1,\lambda+1]}(\sqrt{-\Delta_g})\|_{L^2(M)\to L^q(H)}\lesssim \begin{cases}
\lambda^{\frac{d-1}{4} - \frac{d-2}{2q}}, & 2\le q\le \frac{2d}{d-1}, \\
\lambda^{\frac{d-1}{2}-\frac{d-1}{q}}, & \frac{2d}{d-1} \le q\le \infty.
\end{cases}
\end{align}
Additionally, if $d=2$, and  $H$ has nonvanishing geodesic curvature,
\begin{align}\label{e:bgt2}
    \|\mathds 1_{[\lambda-1,\lambda+1]}(\sqrt{-\Delta_g})\|_{L^2(M)\to L^q(H)}\lesssim \lambda^{\frac13 - \frac{1}{3q}}, \quad 2\le q\le 4.
\end{align}

The result \eqref{e:bgt2} was later extended to higher dimensions in the work by Hu \cite[Theorem 1.4]{Hu09}.  Burq-G\'erard-Tzvetkov \cite{BGT07} also proved that \eqref{e:bgt1} is sharp for all hypersurfaces if $2d/(d-1)\le q\le \infty$, and hypersurfaces containing a piece of geodesic if $2\le q\le 2d/(d-1)$. Indeed, the estimate for high $q$ is saturated by functions concentrating at a point on the scale of $\lambda^{-1}$, so the geometry of $H$ does not come into play. However, as $q$ decreases below $2d/(d-1)$, the geometry of $H$ becomes crucial. In such a case, \eqref{e:bgt1} is saturated by functions concentrating in a small tubular neighborhood of a geodesic curve. This also elucidates why \eqref{e:bgt1} could not be sharp if $H$ is curved with respect to the geodesic curve.

The objective of this article is to provide a comprehensive understanding of the relationship between $L^p$ bounds for quasimodes restricted to the hypersurface $H$ and the geometry of $H$ in a two-dimensional setting. In this setting, $H$ becomes the smooth curve $\gamma: [a,b] \to M$ parametrized by arc length. Denote by $g(\frac{D}{dt}\dot\gamma, \frac{D}{dt}\dot\gamma)$ the geodesic curvature of $\gamma$. One can consider intermediate cases for $\gamma$ where neither $g(\frac{D}{dt}\dot\gamma, \frac{D}{dt}\dot\gamma) = 0$ nor $g(\frac{D}{dt}\dot\gamma, \frac{D}{dt}\dot\gamma) \neq 0$ everywhere; for example, $g(\frac{D}{dt}\dot\gamma, \frac{D}{dt}\dot\gamma)$ may vanish at only one point on $\gamma$.
In such a case, the result by Burq-G\'erard-Tzvetkov does not give information regarding the optimality of the $L^p$ restriction estimate.

These intermediate cases can be characterized in terms of the order of contact between the curve $\gamma$ and the geodesic curve. For a smooth function $f:I \to M$ with an open interval $I\subset \R$ and $t\in I$, we define the $k$-jet $J_t^k f$ of $f$ at $t$ to be a collection of smooth functions $g: I\to M$ such that there exists a open neighborhood $U$ of $f(t)$ satisfying $(\varphi\circ f)^{(n)}(t) = (\varphi\circ g)^{(n)}(t)$ for every smooth function $\varphi: U\to \R$ and $n\le k$. Then we define
\begin{align}\label{def:sigma1}
    \sigma = \sigma_\gamma := \sup_{t,z}\Big(\sup\big\{k\in\N: \gamma\in J_t^k z\big\}\Big)
\end{align}
where the first supremum is taken over $t\in [a,b]$ and all geodesics $z: s\mapsto z_s$ parametrized by arc length such that $z_t = \gamma(t)$. Geometrically, $\sigma$ means the maximal order of contact between $\gamma$ and the geodesic curves.

We define
\begin{equation*}
    \rho(q, \sigma) := \begin{cases}
        \frac{1}{2(2\sigma+1)}\Big(1+\sigma-\frac2q\Big), & \text{if}\  \sigma < \infty, \\
        \frac14, & \text{if}\  \sigma = \infty.
    \end{cases}
\end{equation*}
The following is our main result for the Laplacian on compact Riemannian manifolds.

\begin{thm}\label{thm:specproj}
    Let $M$ be a $2$-dimensional compact Riemannian manifold without boundary. Assume that $\gamma$ is a smooth curve in $M$. Then there exists a constant $C(\gamma, M)>0$ such that for $\lambda\ge 1$,
    \begin{align}\label{e:specproj}
        \big\|\mathds 1_{[\lambda-1,\lambda+1]}(\sqrt{-\Delta_g}) \big\|_{L^2(M)\to L^q(\gamma)}\le C(\gamma,M) \lambda^{\rho(q,\sigma)},\quad 2\le q\le 4.
    \end{align}
    The estimate \eqref{e:specproj} is sharp for all smooth curves $\gamma$ in the sense that if the power $\rho(q,\sigma)$ is replaced by $\rho<\rho(q,\sigma)$, then \eqref{e:specproj} fails.
    %if $\sigma_\gamma<\infty$, the estimate \eqref{e:specproj} is optimal in the sense that there exists a sequence $(\lambda_n)_{n\in \N}$, $\lambda_n \to \infty$, such that 
%\begin{align*}
%    \|\mathds 1_{[\lambda_n-1,\lambda_n+1]}(\sqrt{-\Delta_g})\|_{L^2(M)\to L^q(\gamma)}\ge C_2 \lambda_n^{\rho(q,\sigma)}
%\end{align*}
%for every $2\le q\le 4$ with a positive constant $C_2 = C_2(\gamma, M)$. When $\sigma_\gamma = \infty$, the estimate is sharp up to $\lambda^\eps$-loss, \textit{i.e.,} for $\eps>0$, there exists $C_\eps = C_\eps(\gamma, M)>0$ such that
%\begin{align*}
%    \|\mathds 1_{[\lambda_n-1,\lambda_n+1]}(\sqrt{-\Delta_g})\|_{L^2(M)\to L^q(\gamma)}\ge C_\eps \lambda_n^{\frac14-\eps},\quad 2\le q\le 4.
%\end{align*}
\end{thm}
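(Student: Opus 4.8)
We will deduce Theorem~\ref{thm:specproj} from the semiclassical quasimode restriction estimate that is the main content of this paper: for an $O(h)$ quasimode $f$ of a semiclassical operator $P(h)$ whose principal symbol $p$ has the property that the projections to $M$ of its bicharacteristics make contact of order $\sigma$ with $\gamma$, one has $\|f\|_{L^q(\gamma)}\lesssim h^{-\rho(q,\sigma)}\|f\|_{L^2(M)}$ for $2\le q\le 4$. Granting this, set $\chi_\lambda=\mathds 1_{[\lambda-1,\lambda+1]}(\sqrt{-\Delta_g})$ and $u=\chi_\lambda f$. Since $|\tau^2-\lambda^2|\le 2\lambda+1$ on $[\lambda-1,\lambda+1]$, the spectral theorem gives $\|(-\Delta_g-\lambda^2)u\|_{L^2}\lesssim\lambda\|u\|_{L^2}$; equivalently, with $h=\lambda^{-1}$, the function $u/\|u\|_{L^2}$ is an $O(h)$ quasimode of $P(h)=-h^2\Delta_g-1$. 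The principal symbol $p(x,\xi)=|\xi|_g^2-1$ has Hamilton flow that, on $\{p=0\}$, is the unit-speed geodesic flow, so the projected bicharacteristics are exactly the geodesics and the relevant contact order is $\sigma_\gamma=\sigma$. As $\|u\|_{L^2}\le\|f\|_{L^2}$, this yields \eqref{e:specproj}. It remains to explain (a) the semiclassical estimate and (b) its sharpness.

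For (a), after the standard reduction to short distances (writing $\chi_\lambda$ through a compactly supported multiplier in $\sqrt{-\Delta_g}$ and discarding the long-time remainder by finite propagation speed and nonstationary phase) one is reduced to a $TT^*$ estimate for $R_\gamma\chi_\lambda$, with $R_\gamma$ restriction to $\gamma$. Via the Hadamard parametrix, $R_\gamma\chi_\lambda\chi_\lambda R_\gamma^*$ has kernel $K_\lambda(s,t)\approx\lambda^{1/2}d(\gamma(s),\gamma(t))^{-1/2}\sum_{\pm}a_\pm(s,t)\,e^{\pm i\lambda d(\gamma(s),\gamma(t))}$ away from the diagonal (the $|s-t|\lesssim\lambda^{-1}$ part contributing an operator of norm $O(1)$). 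Since $|K_\lambda(s,t)|\lesssim\lambda^{1/2}|s-t|^{-1/2}$, the one-dimensional Hardy--Littlewood--Sobolev inequality gives $\|R_\gamma\chi_\lambda\|_{L^2\to L^4(\gamma)}^2=\|R_\gamma\chi_\lambda\chi_\lambda R_\gamma^*\|_{L^{4/3}\to L^4}\lesssim\lambda^{1/2}$, the universal exponent $\lambda^{1/4}=\lambda^{\rho(4,\sigma)}$ at $q=4$, which uses no geometry of $\gamma$. The real point is the other endpoint $\|R_\gamma\chi_\lambda\chi_\lambda R_\gamma^*\|_{L^2(\gamma)\to L^2(\gamma)}\lesssim\lambda^{\sigma/(2\sigma+1)}=\lambda^{2\rho(2,\sigma)}$, which must exploit the oscillation of $K_\lambda$. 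Microlocalizing so that a single geodesic joins $\gamma(s)$ and $\gamma(t)$ and discarding the (exponentially favorable) transverse interactions, the phase is $\lambda\,d(\gamma(s),\gamma(t))=\lambda(|s-t|+R(s,t))$, where in Fermi coordinates along a geodesic osculating $\gamma$ to the maximal order $\sigma$ at a point $t_0$ the remainder $R$ vanishes to order exactly $2\sigma+1$ at $(t_0,t_0)$ and is strictly more nondegenerate elsewhere. Decomposing dyadically in $|s-t|$ and in $\dist(\cdot,t_0)$ and applying van der Corput's lemma --- equivalently, a parabolic rescaling at the critical scale $|s-t_0|\sim\lambda^{-1/(2\sigma+1)}$ that renormalizes $\gamma$ to a fixed model curve --- one sums the pieces to the claimed bound; the supremum over $t_0$ and geodesics in \eqref{def:sigma1} is absorbed because, by compactness, there are only finitely many worst points, each controlled by $\sigma$. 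Interpolating the $q=2$ and $q=4$ bounds, and observing that $\rho(q,\sigma)$ is precisely the affine function of $1/q$ with $\rho(2,\sigma)=\tfrac{\sigma}{2(2\sigma+1)}$ and $\rho(4,\sigma)=\tfrac14$, gives \eqref{e:specproj} for all $2\le q\le 4$.

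For (b) one tests against the extremizer of the $TT^*$ operator above. Let $t_0$ be a point of maximal contact, $z$ the osculating geodesic there, and put $f=\chi_\lambda(g\,d\ell_\gamma)$ with $g$ a unit-modulus oscillating bump supported on $\{|t-t_0|\le c\,\ell_*\}$, $\ell_*=\lambda^{-1/(2\sigma+1)}$, chosen so that $R_\gamma\chi_\lambda\chi_\lambda R_\gamma^*g\approx\mu g$ with $\mu\sim\lambda^{\sigma/(2\sigma+1)}$. On this arc $\gamma$ differs from $z$ by at most $O(\lambda^{-1})$ in the phase, so the flat model applies and one computes $\|f\|_{L^q(\gamma)}\sim\lambda^{1/2}\ell_*^{\,1/2+1/q}$ and $\|f\|_{L^2(M)}\sim\lambda^{1/4}\ell_*^{\,3/4}$, whose ratio is $\lambda^{1/4}\ell_*^{\,1/q-1/4}=\lambda^{\rho(q,\sigma)}$. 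When $\sigma=\infty$ one has $\ell_*\sim1$ and $f$ is just a Gaussian beam along the geodesic segment contained in $\gamma$, giving $\lambda^{1/4}$; at $q=4$ the point-concentration example also saturates. Thus for every $\lambda\ge1$ there is $f$ with $\|\chi_\lambda f\|_{L^q(\gamma)}\gtrsim\lambda^{\rho(q,\sigma)}\|f\|_{L^2}$, so no exponent below $\rho(q,\sigma)$ is admissible.

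The main obstacle is step (a): extracting the \emph{sharp} power $\rho(2,\sigma)$ from a degenerate oscillatory integral whose degeneracy is exactly the contact order, and doing this for the genuine variable-coefficient problem (and for general $P(h)$), where the phase is not a homogeneous polynomial and the ``curved'' dyadic decomposition couples to the geometry of both $\gamma$ and the Hamilton flow. The remaining delicate points are to make the supremal definition \eqref{def:sigma1} of $\sigma$ effective and uniform down to the critical scale $\lambda^{-1/(2\sigma+1)}$, and to verify that the extremal construction in (b) survives on a curved manifold.
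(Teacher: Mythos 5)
Your top-level reduction agrees with the paper: Theorem~\ref{thm:specproj} is obtained from the semiclassical quasimode estimate (Theorem~\ref{thm:main}) by applying it to $\chi_V\mathds 1_{[\lambda-1,\lambda+1]}(\sqrt{-\Delta_g})g$ with $P(h)=-h^2\Delta_g-1$, $h=\lambda^{-1}$, and the endpoints are $q=4$ (Tacy / Hardy--Littlewood--Sobolev) and $q=2$ ($TT^*$ on the restricted kernel). But the heart of the matter --- extracting the sharp power $\lambda^{\sigma/(2\sigma+1)}$ from the $L^2(\gamma)\to L^2(\gamma)$ oscillatory integral --- is exactly where your step (a) has a gap. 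You assert that after localizing near a maximally osculating geodesic and writing $\lambda d(\gamma(s),\gamma(t))=\lambda(|s-t|+R(s,t))$, ``applying van der Corput's lemma'' and dyadic summation give the result. In fact the rescaled mixed derivative $\partial_u\partial_v\Phi$ analyzed in Proposition~\ref{prop:phibound} \emph{vanishes} at interior points of the critical scale exactly when $\sigma$ is even, because then the osculating bicharacteristic through $\gamma(t_0)$ crosses $\gamma$ a second time (Figure~\ref{fig:casesigma}, Lemma~\ref{lem:roots}: $\wp_{\sigma,1}\wp_{\sigma,2}$ has a real root iff $\sigma$ is even). In that regime a first-derivative van der Corput argument cannot close; the paper must estimate the first and second $u$-derivatives of the phase difference jointly (inequality~\eqref{ineq:lophase2}), and the dyadic-piece bounds in Proposition~\ref{prop:esttk}(b) are genuinely different for odd and even $\sigma$ ($2^{\sigma k}$ versus $h^{-1/4}2^{(2\sigma-1)k/4}$), coinciding only after summation. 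Without discovering this parity dichotomy, your ``van der Corput at the critical scale'' is not a proof.

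Two further points. First, you invoke ``compactness'' for the claim that there are finitely many worst points; the paper actually needs Lemma~\ref{lem:finite} --- an induction comparing the Taylor coefficients of $\gamma$ and the flow via the Hamilton equations --- to show that $\{t:\sigma_{\gamma,p}(t)\ge 2\}$ is finite whenever $\sigma<\infty$, since compactness alone does not preclude accumulation. This lemma (together with the transverse/tangential microlocal split of Lemma~\ref{lem:covering}) is what reduces the problem to the model Proposition~\ref{prop:main}. Second, your Hadamard-parametrix route is specific to the Laplacian; the paper instead works with the eikonal/propagator parametrix for $e^{ih^{-1}Ps}$ and the bicharacteristic flow so that the identical argument proves Theorem~\ref{thm:main} for all admissible semiclassical symbols. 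Your extremizer discussion for sharpness is in the right spirit and matches the critical scale $\lambda^{-1/(2\sigma+1)}$ used in the paper's Propositions~\ref{prop:sharp} and~\ref{prop:sharpeign}.
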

%Combined with \eqref{e:bgt1} for $4\le q\le \infty$, Theorem \ref{thm:specproj} provides the result for sharp $L^p$ spectral projection restriction estimates for $2$-dimensional compact Riemannian manifolds $M$ (without boundary) and smooth curves $\gamma$.

\begin{rem}
    The estimate \eqref{e:specproj} was initially studied by Hu \cite{Hu09} in a model case. In this work, Hu obtained \eqref{e:specproj} under the condition that $M$ is the $2$-dimensional flat torus $\mathbb T^2$ and $\gamma$ is a smooth curve such that $\gamma^{(\sigma+1)}(0)\neq 0$ and $\gamma^{(k)}(0) = 0$ for $2\le k\le \sigma$. To contextualize this within our framework, we note that the condition on the curve is equivalent to stating that $\gamma(t)$ intersects with the geodesic line $\gamma(0)+\gamma'(0)t$ at $t=0$ with a contact order $\sigma$. Therefore, Theorem \ref{thm:specproj} serves as a generalization of Hu's result to all compact Riemannian manifolds $M$.
\end{rem}

\subsubsection*{$L^p$ quasimode restriction estimates for general pseudodifferential operators}

Theorem \ref{thm:specproj} arises as a corollary of our subsequent main result, which addresses the $L^p$ restriction estimates in the general setting of smooth Riemannian manifolds and semiclassical pseudodifferential operators. In the works by Tacy \cite{Tac10} and Hassell-Tacy \cite{HT12}, the authors extended the results of Burq-G\'erard-Tzvetkov and Hu to estimates for quasimodes, working in the semiclassical framework. To describe their work in more detail, we introduce the localization condition.

\begin{defn}
    Let $f$ be an $O(h)$ quasimode of $P(h)$ and $K$ be a compact subset of $T^*M$.
    We call $f$ localized in phase space if and only if there exists $\chi\in C_c^\infty(M)$ such that
    \begin{align}\label{i:localized}
        \supp(\chi)\subset K^\circ,\quad f(x) = \chi^w(x,hD)f(x) + O_\mathcal S(h^\infty),
    \end{align}
    where $\chi^w(x,hD)$ denotes the Weyl quantization of $\chi$ and $K^\circ$ denotes the interior of $K$. Also, by $O_{\mathcal S}(h^\infty)$ we denote a collection of functions $g\in \mathcal S(\R^2)$ such that each seminorm of $g$ is bounded by $C_N h^N$ for every $N\in \N$.
\end{defn}

To prevent the symbol $p$ from either identically vanishing or being equal to $\xi_1$, the situation in which one can obtain the $L^p$ restriction estimate merely using Sobolev inequality (see \cite{BGT04, HT12, KTZ07}), we need to impose geometric assumptions on the symbol $p$. We first assume
\[
 (A1)\ \partial_\xi p(x,\xi)\neq 0 \text{ for every } (x,\xi)\in K \text{ such that } p(x,\xi) = 0.
\]
The assumption tells us that the sets
\[
S_{p,x}:= \{\xi: p(x,\xi) = 0\},\quad x\in \pi(K),
\]
are smooth hypersurfaces in $T_x^*M$, where $\pi: T^*M\to M$ is a projection. The next assumption makes us preclude considering the case $p(x,\xi) = \xi_1$.
\[
(A2)\ \text{For every $x$, the second fundamental form of $S_{p,x}$ is positive definite.}
\]
If $p$ satisfies both the conditions (A1) and (A2), we call the symbol $p$ admissible.

In the semiclassical context, the natural counterpart to the concept of geodesic is the bicharacteristic flow of $P(h)$. The flow is given by the solution $(x,\xi)$ to the equation
\begin{align*}
    \begin{cases}
        \dot x = \partial_\xi p(x,\xi),\\
        \dot \xi = -\partial_x p(x,\xi).
    \end{cases}
\end{align*}
We note that the bicharacteristic flow coincides with the geodesic flow when $P(h)$ is $-h^2\Delta_g-1$. 
As a natural counterpart to the nonvanishing geodesic curvature condition considered in \cite{BGT07}, Hassell-Tacy \cite{HT12} investigated the scenario where $\gamma$ is curved with respect to the bicharacteristic, which means that $\gamma$ and bicharacteristic flows have a contact of order at most $1$. The following is a $2$-dimensional version of the result by Tacy \cite{Tac10} and Hassell-Tacy \cite{HT12}.

\begin{thm}[\cite{Tac10, HT12}]\label{thm:ht}
    Let $M$ be a smooth Riemannian manifold of dimension $2$ and $\gamma:[a,b]\to M$ be a smooth curve. Let $p$ be a real-valued admissible symbol. Assume that $f$ is a quasimode localized in phase space. Then we have
    \begin{align}\label{e:tacy}
    \|f\|_{L^q(\gamma)}\lesssim \begin{cases}
h^{-\frac14}, & 2\le q\le 4, \\
h^{-\frac12+\frac1q}, & 4 \le q\le \infty.
    \end{cases}
\end{align}
Furthermore, if $\gamma$ is curved with respect to the bicharacteristic, we have the improved estimate
\begin{align}\label{e:hasselltacy}
    \|f\|_{L^q(\gamma)}\lesssim h^{-(\frac13 - \frac{1}{3q})}, \quad 2\le q\le 4.
\end{align}
\end{thm}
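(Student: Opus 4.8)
\emph{Reduction to an approximate reproducing operator.}
The plan is to reduce both estimates to kernel bounds for a microlocal cutoff of $P(h)$ at the characteristic variety, and then to carry out the resulting oscillatory integral analysis. Write $R_\gamma u := u\circ\gamma$ for the restriction operator. Modifying $p$ outside $K$ (which changes $f$ only by $O_{\mathcal S}(h^\infty)$), we may assume $P(h)$ self-adjoint, and we fix a smooth dyadic partition of unity on its spectrum, producing $f = \sum_{0\le j\lesssim\log(1/h)} u_j + O_{L^2}(h^\infty)$, where $u_j$ is microlocalized to $\{|p|\sim 2^j h\}$ for $j\ge1$ and to $\{|p|\lesssim h\}$ for $j=0$. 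Factoring the $j$-th cutoff as $\mu\mapsto\mu\,\widetilde\theta_j(\mu)$ with $\|\widetilde\theta_j\|_\infty\lesssim (2^j h)^{-1}$ shows $u_j = \widetilde\theta_j(P(h))P(h)f$ for $j\ge1$, so the quasimode hypothesis forces $\|u_j\|_{L^2}\lesssim 2^{-j}$ (while $\|u_0\|_{L^2}\le1$). Each $u_j$ is reproduced by a microlocal cutoff $\Pi_h^{(j)}$ to $\{|p|\lesssim 2^j h\}$; we will see that $\|R_\gamma\Pi_h^{(j)}\|_{L^2(M)\to L^q(\gamma)}$ exceeds its value at $j=0$ by at most a factor $2^{j/2}$, so that $\sum_j\|R_\gamma u_j\|_{L^q(\gamma)}$ converges geometrically to a multiple of $\|R_\gamma\Pi_h\|_{L^2(M)\to L^q(\gamma)}$ with $\Pi_h := \Pi_h^{(0)}$, and everything reduces to the block $j=0$. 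Finally, the $TT^*$ identity gives $\|R_\gamma\Pi_h\|_{L^2(M)\to L^q(\gamma)}^2 = \|R_\gamma\Pi_h\Pi_h^*R_\gamma^*\|_{L^{q'}(\gamma)\to L^q(\gamma)}$, i.e.\ an $L^{q'}\to L^q$ bound on $[a,b]$ for the kernel $K(t,t') := (\Pi_h\Pi_h^*)(\gamma(t),\gamma(t'))$.

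\emph{The kernel parametrix.}
Since the symbols involved are compactly supported near $K\cap\{p=0\}$ and $p$ is admissible, $\Pi_h$ may be constructed so that $\Pi_h\Pi_h^*$ admits the standard semiclassical parametrix: writing it as $\tfrac1{2\pi}\int\widehat\chi(s)\,e^{isP(h)/h}\,ds$ with $\widehat\chi\in C_c^\infty$ supported near $0$ and inserting the propagator parametrix on that small time interval (where there are no caustics) represents the kernel as a semiclassical oscillatory integral in the fiber variables; the stationary phase there is nondegenerate precisely because (A1) makes $S_{p,x}$ smooth and (A2) makes its second fundamental form definite, which supplies the $\tfrac{d-1}2=\tfrac12$ curvature gain. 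One obtains, on the relevant compact set,
\[
|(\Pi_h\Pi_h^*)(x,y)| \lesssim h^{-1}\big(1 + |x-y|/h\big)^{-1/2},
\]
and, for $h\lesssim|x-y|\lesssim1$, the refined form $(\Pi_h\Pi_h^*)(x,y) = h^{-1/2}|x-y|^{-1/2}\sum_\pm e^{i\psi_\pm(x,y)/h}a_\pm(x,y;h)$, where $a_\pm$ and all their derivatives are bounded uniformly in $h$ and $\psi_\pm$ are the two generating functions of the bicharacteristic relation of $P(h)$ (for $-h^2\Delta_g-1$, $\psi_\pm = \pm d_g$). Restricting to $\gamma$ and using that arc-length parametrization gives $|\gamma(t)-\gamma(t')|\simeq|t-t'|$ near the diagonal, $|K(t,t')|\lesssim h^{-1}(1+|t-t'|/h)^{-1/2}$, with oscillatory refinement $K(t,t')=h^{-1/2}|t-t'|^{-1/2}\sum_\pm e^{i\Psi_\pm(t,t')/h}b_\pm$, $\Psi_\pm(t,t') := \psi_\pm(\gamma(t),\gamma(t'))$, for $h\lesssim|t-t'|\lesssim1$. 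The blocks $\Pi_h^{(j)}$ ($j\ge1$), associated to windows of width $2^j h$, obey analogous but lossier kernel bounds, which produce the factor $2^{j/2}$ claimed in the first paragraph.

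\emph{The general estimate \eqref{e:tacy}.}
This uses only the size of $K$. At $q=2$, Schur's test gives
\[
\|K\|_{L^2([a,b])\to L^2([a,b])} \lesssim h^{-1}\!\cdot h + h^{-1/2}\!\!\int_h^1 s^{-1/2}\,ds \lesssim h^{-1/2},
\]
so $\|R_\gamma\Pi_h\|_{L^2(M)\to L^2(\gamma)}\lesssim h^{-1/4}$. At $q=4$, split $K$ at $|t-t'|=h$: on $|t-t'|\le h$ the kernel lies in $L^2$ with norm $\lesssim h^{-1/2}$, so Young's inequality gives an $L^{4/3}\to L^4$ bound $h^{-1/2}$; on $|t-t'|\gtrsim h$ the kernel is $\lesssim h^{-1/2}|t-t'|^{-1/2}$, so Hardy--Littlewood--Sobolev (fractional integration of order $\tfrac12$ maps $L^{4/3}\to L^4$) gives an $L^{4/3}\to L^4$ bound $h^{-1/2}$; hence $\|R_\gamma\Pi_h\|_{L^2(M)\to L^4(\gamma)}\lesssim h^{-1/4}$. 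At $q=\infty$, $\|R_\gamma\Pi_h\|_{L^2(M)\to L^\infty(\gamma)}\lesssim\sup_x\big((\Pi_h\Pi_h^*)(x,x)\big)^{1/2}\lesssim h^{-1/2}$. Interpolating (between $q=2$ and $q=4$, then between $q=4$ and $q=\infty$) yields \eqref{e:tacy} for $\Pi_h$, and resumming the dyadic blocks yields \eqref{e:tacy} for $f$.

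\emph{The curved estimate \eqref{e:hasselltacy}, and the main obstacle.}
As $\rho(q,1)=\tfrac13-\tfrac1{3q}$ is affine in $1/q$ and already equals $\tfrac14$ at $q=4$, it suffices to interpolate the $q=4$ bound above with the improved endpoint $\|R_\gamma\Pi_h\|_{L^2(M)\to L^2(\gamma)}\lesssim h^{-1/6}$, i.e.\ $\|K\|_{L^2\to L^2}\lesssim h^{-1/3}$; here the oscillation of $K$ is essential. The step I expect to be the crux is the geometric lemma identifying ``$\gamma$ curved with respect to the bicharacteristic'' with the fold condition
\[
\partial_t\partial_{t'}\Psi_\pm(t,t') = c_\pm(t)(t-t') + O\big((t-t')^3\big), \qquad c_\pm \text{ nowhere vanishing}
\]
(for $-h^2\Delta_g-1$ this is the classical expansion $\partial_t\partial_{t'}d_g(\gamma(t),\gamma(t'))=\tfrac14\kappa(t)^2(t-t')+O((t-t')^3)$ with $\kappa$ the geodesic curvature). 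Granting it, decompose $K$ dyadically in $|t-t'|\sim2^l$ for $h\le2^l\lesssim1$ (the part $|t-t'|\le h$ being $O(1)$ by Schur). On each block $K_l$, a $TT^*$ computation shows the phase of $K_lK_l^*$ has derivative in the integration variable comparable to $(t-t'')\,\partial_t\partial_{t'}\Psi_\pm/h\simeq(t-t'')\,2^l/h$, so non-stationary-phase integration confines $K_lK_l^*$ to $|t-t''|\lesssim h\,2^{-2l}$ with size $\lesssim(h^{-1/2}2^{-l/2})^2\,2^l$; Schur then gives $\|K_l\|_{L^2\to L^2}\lesssim2^{-l}$ when $2^l\gtrsim h^{1/3}$ and $\lesssim h^{-1/2}2^{l/2}$ otherwise. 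Both branches peak at $2^l\sim h^{1/3}$ with value $h^{-1/3}$, and the dyadic sums are geometric, so $\|K\|_{L^2\to L^2}\lesssim h^{-1/3}$, which interpolated with the $q=4$ bound gives \eqref{e:hasselltacy}. The remaining, mostly routine, work is to make the parametrix above and the fold lemma uniform over all admissible symbols $p$ and all smooth curves $\gamma$, so that the implicit constants depend only on finitely many derivatives of $p$ on $K$ and of $\gamma$.
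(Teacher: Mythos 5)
Your proposal is essentially correct and takes broadly the same route as the paper's proof of its own Theorem \ref{thm:main}, which subsumes the $2\le q\le4$ range of Theorem \ref{thm:ht} (the theorem itself is cited to \cite{Tac10,HT12}, not proved, in the paper): reduce via $TT^*$ to a kernel bound for a microlocal reproducing operator restricted to $\gamma$, write the kernel via a semiclassical propagator parametrix and stationary phase as $h^{-1/2}|t-t'|^{-1/2}e^{i\Psi/h}$, decompose dyadically, and exploit the fold structure of $\partial_t\partial_{t'}\Psi$. The genuine differences from the paper's route: you first reduce the $O(h)$ quasimode to the on-shell piece by decomposing into dyadic frequency shells $u_j\sim\{|p|\sim 2^jh\}$ and summing the per-shell losses geometrically, whereas the paper writes $f=\chi^w(x,hD)(h^{-1}P+i)^{-1}(if+v)+O_{\mathcal S}(h^\infty)$ and expresses the resolvent directly as $\int\mathcal F(\,\cdot\,)(s)\,e^{isP/h}\,ds$, avoiding the shell decomposition altogether; and you decompose the kernel dyadically in $|t-t'|$ from the outset, whereas the paper decomposes first in the propagator time $s$ and then in $|u-v|$ (these are compatible because stationary phase pins $s\approx u-v$; see Lemma \ref{lem:seta}). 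Your Schur/Young/Hardy--Littlewood--Sobolev treatment of \eqref{e:tacy}, the $2^{j/2}$ per-shell loss, and the dyadic $TT^*$ count giving $\|K\|_{L^2\to L^2}\lesssim h^{-1/3}$ at the critical scale $|t-t'|\sim h^{1/3}$ all check out.

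The one substantive gap, which you identify but do not fill, is the fold lemma
\[
\partial_t\partial_{t'}\Psi_\pm(t,t')=c_\pm(t)(t-t')+O\big((t-t')^3\big),\qquad c_\pm\ \text{nowhere vanishing},
\]
for a general admissible symbol. For $p=|\xi|_g^2-1$ this can be read off from classical second-variation formulas for $d_g$, but for general $p$ it is precisely the $\sigma=1$ specialization of the paper's Proposition \ref{prop:phibound}, and its proof occupies all of Section \ref{sec:phase}: one must relate the mixed derivative to $z_{v-u}(\gamma(u),\xi(u))-\gamma(v)$ via the critical-point parametrization (and in particular control $\xi(u)-k(u,v)$ carefully through Lemma \ref{lem:setab}), and then Taylor-expand using Lemma \ref{lem:djbegam}, where the nonvanishing of $c$ ultimately traces back to the admissibility condition (A2) and the nondegeneracy of the contact. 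Calling this step ``mostly routine'' understates it: it is the technical heart of the problem, and without it your argument is complete only for the Laplacian case. The rest of your outline is sound.
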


Similarly to the result by Burq-G\'erard-Tzvetkov, there are the intermediate cases for $\gamma$ where Theorem \ref{thm:ht} does not yield sharp estimates. 
To characterize these intermediate cases, we consider the order of contact between the curve $\gamma$ and the bicharacteristic flow. 
For $(x,\xi)\in T^*M$, let $s\mapsto \kappa_s(x,\xi): I \to T^*M$ be the bicharacteristic flow of $P(h)$ with an open interval $I\subset \R$ containing $0$ such that $\kappa_0(x,\xi) = (x,\xi)$. We denote
\[
\kappa_s(x,\xi) = (z_s(x,\xi), \zeta_s(x,\xi)).
\]
For $t\in [a,b]$, we define $\Sigma_{\gamma, p}(t)$ by the set consisting of $k\ge 0$ such that there exists a reparametrization $\gamma_0: [-c,c]\to M$ of $\gamma$ and $\xi\in T_{\gamma(t)}^*M$ such that
\[
\text{\ $(\gamma(t),\xi)\in K$, \ $p(\gamma(t),\xi)=0$, \ $\gamma_0(0) = \gamma(t)$, \ $\gamma_0\in J_0^k z(\gamma(t),\xi)$}
\]
where $z(\gamma(t),\xi)$ denotes the mapping $s\mapsto z_s(\gamma(t),\xi)$.
Then we set
\[
\sigma_{\gamma,p}(t):= \sup\big(\Sigma_{\gamma, p}(t) \cup \{-1\}\big),\quad \sigma_{\gamma,p} = \sigma := \sup_{t\in [a,b]}\sigma_{\gamma,p}(t).
\]
In the case where $M$ is a compact Riemannian manifold and $P(h) = -h^2\Delta_g-1$, the above definition of $\sigma$ coincides with the definition \eqref{def:sigma1} if $K$ is suitably chosen so that $\{(x,\xi)\in T^*M: x\in \pi(K),\  |\xi|_g^2 -1 = 0\}\subset K$.

When $\sigma \le 0$, \textit{i.e.,} either $|p(\gamma(t),\xi)|\ge 1/C$ for $(\gamma(t), \xi)\in K$ or the bicharacteristic flows transversely intersects with $\gamma$, Tacy \cite{Tac10} proved that
\begin{align}\label{e:tacylq}
    \|f\|_{L^q(\gamma)}\lesssim h^{\frac1q-\frac12},\quad 2\le q\le \infty
\end{align}
holds for quasimodes $f$ of $P(h)$ localized in phase space. Therefore, from now on we assume $\sigma\ge 1$.\footnote{This assumption is natural in the sense that for every $x\in\gamma$, there exists a unique geodesic of $M$ that meets $\gamma$ at $x$ in a tangential manner.} Note that $\sigma = 1$ corresponds to the case that $\gamma$ is curved with respect to the flows.

Then the main result of this article reads as follows.

\begin{thm}\label{thm:main}
Let $M$ be a smooth $2$-dimensional Riemannian manifold without boundary. Let $h_0>0$ be a sufficiently small number. Suppose that $p = p(x,\xi)$ is an admissible real-valued symbol on $T^*M$ and $\gamma$ is a smooth curve in $M$ such that $\sigma_{\gamma,p}\ge 1$.
Then there exists a positive constant $C_1 = C_1(\gamma,p,M)$ such that for every $O(h)$ quasimode $f$ of $P(h)$ satisfying the localization property \eqref{i:localized}, 
\begin{align}\label{e:main}
    \|f\|_{L^q(\gamma)}\le C_1 h^{-\rho(q, \sigma)},\quad 2\le q\le 4,\quad h\in (0,h_0].
\end{align}
Furthermore, the estimate is sharp for all $\gamma$ and $p$ in the sense that there exists an $O(h)$ quasimode $g(h) = g(h,\gamma,p,M)$ of $P(h)$ satisfying \eqref{i:localized} such that the estimate
\begin{align*}
    \|g(h)\|_{L^q(\gamma)}\le C h^{-\rho}
\end{align*}
does not hold for any $\rho<\rho(q,\sigma)$.
%Furthermore, if $\sigma_{\gamma, p}<\infty$, the estimate \eqref{e:main} is optimal in the sense that for $h\in (0,h_0]$, there exists an $O(h)$ quasimode $g(h) = g(h,\gamma,p,M)$ of $P(h)$ such that $g(h)$ satisfy \eqref{i:localized} and
%\begin{align*}
%    \|g(h)\|_{L^q(\gamma)}\ge C_2 h^{-\rho(q, \sigma)}
%\end{align*}
%for every $2\le q\le 4$ with a positive constant $C_2 = C_2(\gamma,p,M)$. If $\sigma_\gamma = \infty$, the estimate is sharp up to $h^{-\eps}$-loss, \textit{i.e.,}
%\begin{align*}
%    \|g(h)\|_{L^q(\gamma)}\ge C_\eps h^{-\frac14+\eps},\quad 2\le q\le 4,\quad h\in (0,h_0]
%\end{align*}
%with a positive constant $C_\eps = C_\eps(\gamma, p, M)$.
\end{thm}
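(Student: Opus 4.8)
The plan is to reduce everything to a single normalized model operator after a sequence of localizations, then prove the quasimode restriction bound by a $TT^*$ argument in which the kernel is controlled by a careful analysis of the contact order $\sigma$, and finally construct the saturating quasimode by hand.

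Let me think about the structure here.

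First, the upper bound \eqref{e:main}. The standard strategy (following Tacy, Hassell-Tacy) is: since $f$ is localized in phase space to a compact $K^\circ$, and using a microlocal partition of unity, one can work in a single coordinate chart. The key reduction is to normalize the symbol. Because $p$ is admissible (A1)+(A2), near a point $(x_0,\xi_0)$ with $p=0$, by the implicit function theorem and a canonical transformation (conjugating $P(h)$ by an $h$-FIO), one can bring $p$ to a normal form where the bicharacteristic flow is essentially straightened — locally $p(x,\xi) = \xi_2 - a(x_1,\xi_1)$ or, after further normalization exploiting (A2), to the model $p(x,\xi) = \xi_2 - \xi_1^2/2 + \dots$, so that the $x$-space projection of the bicharacteristic through a point is a curve that agrees to high order with a parabola/line. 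The curve $\gamma$ is then a smooth curve in the $(x_1,x_2)$-plane, and the definition of $\sigma = \sigma_{\gamma,p}$ says precisely that $\gamma$ has contact of order exactly $\sigma$ (and no more) with some such bicharacteristic projection at some point $t_*$, and contact of order at most $\sigma$ everywhere.

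So the heart of the matter: after normalization and localizing to a small neighborhood of the worst point $t_*$ on $\gamma$ at scale depending on $h$, we must show $\|f\|_{L^q(\gamma)} \lesssim h^{-\rho(q,\sigma)}$. The plan is the following. Write $f = U f$ where $U = \psi^w(x,hD)$ is a microlocal cutoff, so the restriction operator $R: f \mapsto f|_\gamma$ factors through the parametrix. Consider $R U (R U)^* = RU U^* R^*$, an operator on $L^2(\gamma) = L^2([a,b])$ with kernel $K(t,t') = \int \bar{u}_t(x) u_{t'}(x)\, dx$ where $u_t(x)$ is (a piece of) the Schwartz kernel of $U$ restricted to the line $x = \gamma(t)$. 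Using stationary phase / the known dispersive bounds for $U$ (the half-wave-type parametrix associated to an admissible $P(h)$), $K(t,t')$ is an oscillatory integral whose phase is essentially the difference $S_c(\gamma(t)) - S_c(\gamma(t'))$ along the bicharacteristic, and whose decay in $(t-t')/h$ is governed by the nondegeneracy of the critical point — which is exactly where the contact order enters. When $\gamma$ is curved ($\sigma=1$) one gets $|K(t,t')| \lesssim h^{-1}(1 + |t-t'|/h)^{-1/2}$ and Young/interpolation gives the Hassell-Tacy exponent $1/3 - 1/3q$; for general $\sigma$, the degenerate stationary phase along the contact direction gives $|K(t,t')| \lesssim h^{-1}(1 + |t-t'|/h)^{-1/(2\sigma)}$ near $t_*$, hence by Young's inequality on the line $\|RU(RU)^*\|_{L^{q'}\to L^q} \lesssim h^{-1} \cdot h^{1-\frac{1}{\sigma}(1 - \frac 2q)} = h^{-\frac1\sigma(1-\frac2q)}$ times $\log$-type losses that are absorbed, giving $\|RU\|_{L^2\to L^q}^2 \lesssim h^{-2\rho(q,\sigma)}$ with $\rho(q,\sigma) = \frac{1}{2(2\sigma+1)}(1+\sigma - \frac2q)$ after optimizing the interpolation between this estimate and the trivial $L^2$ bound $\|RU\|_{L^2\to L^2}\lesssim h^{-1/2}$ — this is the arithmetic that produces the $2\sigma+1$ in the denominator. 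Away from $t_*$ the contact order is strictly smaller, so those contributions are better and summable. \emph{The main obstacle} is making the degenerate-stationary-phase kernel bound $|K(t,t')|\lesssim h^{-1}(1+|t-t'|/h)^{-1/(2\sigma)}$ uniform: one must control the oscillatory integral uniformly over the whole curve with a phase whose Hessian degeneracy jumps from order $1$ to order $\sigma$ as $t,t'$ approach $t_*$, which requires a van~der~Corput-type estimate with parameters and a dyadic decomposition in $|t-t_*|+|t'-t_*|$ together with a rescaling argument at each dyadic scale — this is precisely the "geometric analysis of the contact order" advertised in the abstract and is the technical core.

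Second, the sharpness. Here the plan is explicit construction. By the normalization above it suffices to build, for the model operator, a quasimode $g(h)$ of $L^2$-norm one that concentrates in an $h$-dependent tube around the bicharacteristic curve realizing the contact order $\sigma$ at $t_*$. Concretely, take $g(h)$ to be a superposition $\int a(\xi_1)\, e^{i\phi(x,\xi_1)/h}\, d\xi_1$ where $\phi$ solves the eikonal equation $p(x,\partial_x\phi) = 0$ and $a$ is supported on an interval of length $\sim h^{\beta}$ in $\xi_1$; such $g$ is automatically an $O(h)$ quasimode localized in phase space (the error is $O(h^\infty)$ plus genuinely $O(h)$ from the symbol expansion). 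One checks that $|g(h)|$ is comparable to $h^{-(1-\beta)/2}$ on a curved slab of dimensions roughly $h^{1-\beta}\times h^{(1-\beta)/?}$ and, crucially, because $\gamma$ osculates the central bicharacteristic to order $\sigma$, the curve $\gamma$ stays inside this slab for a $t$-interval of length $\sim h^{(1-\beta)/(\sigma+1)}$. Computing $\|g(h)\|_{L^q(\gamma)}$ and optimizing $\beta$ yields exactly the exponent $\rho(q,\sigma)$, matching the upper bound, so no $\rho < \rho(q,\sigma)$ can work. For $\sigma = \infty$ one instead uses the point-concentration (or short-geodesic-segment) quasimode to get the flat $h^{-1/4}$ bound, which is consistent with $\rho(q,\infty)=1/4$. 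The obstacle on this side is milder: one must verify that the constructed $g$ genuinely satisfies the \emph{localization} hypothesis \eqref{i:localized} with a fixed cutoff independent of $h$ — this is where admissibility and the compactness of $K$ are used — and that the eikonal phase $\phi$ exists on a fixed neighborhood, i.e., no caustic forms before the relevant time scale, which follows from (A1)+(A2).

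Finally, Theorem~\ref{thm:specproj} is deduced from Theorem~\ref{thm:main} exactly as indicated in the text: writing $h = \lambda^{-1}$, the operator $-h^2\Delta_g - 1$ is admissible, $\mathds 1_{[\lambda-1,\lambda+1]}(\sqrt{-\Delta_g})f$ is an $O(h)$ quasimode (after a harmless smooth spectral cutoff and a partition of unity localizing in phase space, using finite propagation speed), and the two notions of $\sigma$ coincide for this choice of $K$; the sharpness transfers because the model quasimodes can be realized, up to $O(h^\infty)$ errors, as spectrally localized functions on $M$.
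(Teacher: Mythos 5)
Your overall architecture---localize in phase space, use a propagator parametrix, run a $TT^*$ argument producing an oscillatory integral on $L^2(\gamma)$, and saturate by an explicit WKB beam---does match the paper's. But there are several genuine gaps in the core of the argument.

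\textbf{The kernel bound and the arithmetic do not work.} You assert a pointwise bound $|K(t,t')|\lesssim h^{-1}(1+|t-t'|/h)^{-1/(2\sigma)}$ and claim that Young's inequality plus interpolation yields $\rho(q,\sigma)$. Neither part holds up. First, the actual pointwise bound from stationary phase in $(s,\eta)$ at the nondegenerate critical point is $|K(t,t')|\lesssim h^{-1/2}|t-t'|^{-1/2}$ (see \eqref{i:asymker}), which is uniformly \emph{better} than your $(1+|t-t'|/h)^{-1/(2\sigma)}$ decay for $\sigma\ge 1$, so your claimed bound is not what degenerate stationary phase produces. Second, and more importantly, \emph{no} pointwise kernel bound together with Young/Schur can produce the sharp exponent: the bound $h^{-1/2}|t-t'|^{-1/2}$ via Schur only gives $\|RU\|_{L^2\to L^2}\lesssim h^{-1/4}$, the $\sigma=\infty$ endpoint, and one cannot recover $\rho(2,\sigma)=\sigma/(2(2\sigma+1))<1/4$ from a pointwise estimate alone. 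The improvement requires exploiting oscillation \emph{in the $u$-integral} of the $TT^*$ kernel via van der Corput, with lower bounds on $\partial_u(\Phi(u,v)-\Phi(u,w))$ and (in a case you miss) $\partial_u^2(\Phi(u,v)-\Phi(u,w))$; the exponent $2\sigma+1$ arises precisely from balancing the trivial bound $2^{-k/2}h^{-1/2}$ against the van der Corput bound $2^{\sigma k}$ at the critical scale $|t-t'|\sim h^{1/(2\sigma+1)}$, not from interpolating a Young bound against $h^{-1/2}$.

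\textbf{The odd/even dichotomy is missing, and it is essential.} When $\sigma$ is even, the first derivative $\partial_u(\Phi(u,v)-\Phi(u,w))$ genuinely vanishes on the support of the amplitude near $t_*$, because the bicharacteristic curve $v\mapsto z_{v-u}(\gamma(u),\xi(u))$ crosses $\gamma$ at a second point (see the discussion around Lemma~\ref{lem:roots} and Figure~\ref{fig:casesigma}). In that case one must use van der Corput of order $2$, yielding a bound $h^{-1/4}2^{(2\sigma-1)k/4}$ rather than $2^{\sigma k}$ in the regime $2^{-k}\gtrsim h^{1/(2\sigma+1)}$. Your proposal treats the stationary phase degeneracy as a single uniform phenomenon, which would break down for even $\sigma$.

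\textbf{Two further gaps.} (i) Your sentence ``Away from $t_*$ the contact order is strictly smaller, so those contributions are better and summable'' presupposes that the set of high-contact points $\mathcal G_2^{\gamma,p}$ is finite (or at least isolated), which is not obvious and requires proof --- the paper establishes this in Lemma~\ref{lem:finite} via a Taylor-series argument showing that an accumulation point of $\mathcal G_2^{\gamma,p}$ would lie in $\mathcal G_\infty^{\gamma,p}$. (ii) The paper does \emph{not} normalize the symbol by conjugating with an $h$-FIO to straighten the bicharacteristics; it works directly with the eikonal phase $\phi$ and relates $\partial_u\partial_v\Phi$ to the geometric quantity $z_{v-u}(\gamma(u),\xi(u))-\gamma(v)$, which it then approximates by the explicit polynomial $P_\sigma(u,v)\mathfrak b$ (Lemma~\ref{lem:djbegam}, \eqref{i:diff2}). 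Your proposed canonical-transformation normalization is a legitimate alternative starting point, but you do not carry it out, and the hard step --- establishing the uniform phase-difference bounds with parity-dependent structure --- remains untouched under either formulation. The sharpness sketch is in the right spirit (superposition over a $\xi_1$-window, optimizing a scale parameter; the right scale is $\delta=h^{1/(2\sigma+1)}$), but it too is left at the level of heuristics.
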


\begin{rem}
    Theorem \ref{thm:specproj} follows from Theorem \ref{thm:main} by setting $h = \lambda^{-1}$, $f(h)(x) = \mathds 1_{[\lambda-1,\lambda+1]}(\sqrt{-\Delta_g}) g(x)\chi_V(x)$ for $g\in L^2(M)$, where $\chi_V\in C_c^\infty(M)$ supported in an open chart $V$ of $M$. Note that $f(h)$ are localized in phase space to $K = \{(x,\xi)\in T^*M: x\in \overline{V}, |\xi|_g\le 2\}$. The optimality of \eqref{e:specproj} can be derived from the argument used by Burq-G\'erard-Tzvetkov \cite{BGT07} for proving lower bounds on the spectral projection operator. Further details will be discussed in Section \ref{sec:optimal} below.
\end{rem}

\subsubsection*{$L^p$ eigenfunction restriction estimates}
We now provide the applications of the main result to $L^p$ estimates for eigenfunctions restricted to submanifolds.

We first consider the case of Laplace-Beltrami eigenfunctions on compact Riemannian manifolds. These eigenfunctions, denoted by $\varphi_\lambda$, satisfy the equation $-\Delta_g \varphi_\lambda = \lambda^2 \varphi_\lambda$ where $\lambda\ge 0$ denotes the eigenvalues of $\sqrt{-\Delta_g}$. The $L^p$ restriction estimate for the Laplace-Beltrami eigenfunctions has attracted considerable interest from various authors. 
For references on this subject, see \cite{BS18, BR12, Che15, CS14, EP22, FS17, Ngu22, Par23, RZ23, Rez10, SZ14, Tat98, WZ21, XZ17}.

The following is an immediate consequence of Theorem \ref{thm:specproj}, which improves upon the result of Burq-G\'erard-Tzvetkov \cite{BGT07}.

\begin{cor}\label{cor:eigencpt}
    Let $M$ be a $2$-dimensional compact Riemannian manifold without boundary and $(\varphi_\lambda)$, $\lambda\ge 0$, a family of eigenfunctions of $-\Delta_g$ associated with the eigenvalue $\lambda^2$. Assume that $\gamma$ is a smooth curve in $M$. Then there exists a constant $C(\gamma, M)>0$ such that
    \begin{align}\label{e:eigencpt}
        \|\varphi_\lambda\|_{L^q(\gamma)}\le C(\gamma, M) \lambda^{\rho(q,\sigma)}\|\varphi_\lambda\|_{L^2(M)},\quad 2\le q\le 4
    \end{align}
    where $\sigma = \sigma_{\gamma,|\xi|^2_g-1}$. Furthermore, if $M = \mathbb S^2$, then \eqref{e:eigencpt} is sharp for all smooth curves $\gamma$.
\end{cor}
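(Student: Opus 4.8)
The plan is to derive the estimate \eqref{e:eigencpt} directly from Theorem \ref{thm:specproj} and to prove sharpness on $\mathbb S^2$ by exhibiting eigenfunctions that concentrate on pieces of geodesics (great circles) with the right contact order to $\gamma$. For the upper bound, I would first observe that any eigenfunction $\varphi_\lambda$ with $-\Delta_g\varphi_\lambda=\lambda^2\varphi_\lambda$ satisfies $\mathds 1_{[\lambda-1,\lambda+1]}(\sqrt{-\Delta_g})\varphi_\lambda=\varphi_\lambda$, so that applying \eqref{e:specproj} with $f=\varphi_\lambda$ gives
\[
\|\varphi_\lambda\|_{L^q(\gamma)}=\big\|\mathds 1_{[\lambda-1,\lambda+1]}(\sqrt{-\Delta_g})\varphi_\lambda\big\|_{L^q(\gamma)}\le C(\gamma,M)\,\lambda^{\rho(q,\sigma)}\|\varphi_\lambda\|_{L^2(M)},
\]
for $2\le q\le 4$, which is exactly \eqref{e:eigencpt}; here $\sigma=\sigma_{\gamma,|\xi|_g^2-1}$ is the contact order between $\gamma$ and the geodesic flow as in \eqref{def:sigma1}, and the identification of the two definitions of $\sigma$ was already noted in the paragraph following the definition of $\sigma_{\gamma,p}$. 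This step is essentially immediate.

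For the sharpness on $M=\mathbb S^2$, I would use the Gaussian beams (highest-weight spherical harmonics) concentrated along a great circle $C$, together with zonal harmonics concentrated at a point, as the relevant extremizers. By definition of $\sigma=\sigma_\gamma$ on $\mathbb S^2$, there exist a parameter $t_0\in[a,b]$ and a great circle $z$ through $\gamma(t_0)$ with contact order exactly $\sigma$ (or $\sigma=\infty$, in which case $\gamma$ contains an arc of a great circle). Since $\mathbb S^2$ is homogeneous, after a rotation I may assume $z$ is the equator and $\gamma(t_0)$ is a fixed equatorial point. I would then recall the standard facts about the highest-weight spherical harmonic $Y_\lambda$ of degree $\lambda$: it is $L^2$-normalized, concentrated in a $\lambda^{-1/2}$-tube around the equator, with $|Y_\lambda|\sim \lambda^{1/4}$ on the bulk of that tube and rapid decay outside. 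The contribution of the $\lambda^{-1/2}$-neighborhood of $t_0$ along $\gamma$ to $\|Y_\lambda\|_{L^q(\gamma)}$ is governed by how long $\gamma$ stays inside the tube: since $\gamma$ has contact order $\sigma$ with the equator at $t_0$, one has $\operatorname{dist}(\gamma(t),z)\sim |t-t_0|^{\sigma+1}$, so $\gamma(t)$ lies in the tube for $|t-t_0|\lesssim \lambda^{-1/(2(\sigma+1))}$. This yields
\[
\|Y_\lambda\|_{L^q(\gamma)}\gtrsim \lambda^{1/4}\big(\lambda^{-\frac{1}{2(\sigma+1)}}\big)^{1/q}=\lambda^{\frac14-\frac{1}{2q(\sigma+1)}}=\lambda^{\rho(q,\sigma)},
\]
after checking that $\tfrac14-\tfrac{1}{2q(\sigma+1)}$ equals $\rho(q,\sigma)=\tfrac1{2(2\sigma+1)}(1+\sigma-\tfrac2q)$ — wait, this requires care: the exponent $\tfrac14-\tfrac{1}{2q(\sigma+1)}$ does \emph{not} match $\rho(q,\sigma)$ in general, so the naive Gaussian beam is not the right extremizer for all $q$ and $\sigma$; instead one must use the family of quasimodes constructed in the proof of sharpness in Theorem \ref{thm:main} (Section \ref{sec:optimal}), which are genuine spectral projections of $L^2$ functions on $\mathbb S^2$ and hence, by the spectral theorem, are (up to normalization) linear combinations of eigenfunctions at nearby eigenvalues.

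The cleaner route, which I would actually carry out, is therefore: invoke the extremizing quasimode $g(h)$ from the sharpness part of Theorem \ref{thm:main} with $h=\lambda^{-1}$, $p=|\xi|_g^2-1$; since on $\mathbb S^2$ the spectrum of $\sqrt{-\Delta_g}$ is $\{\sqrt{k(k+1)}\}$ with eigenvalue gaps bounded below, the spectral window $[\lambda-1,\lambda+1]$ contains at most $O(1)$ distinct eigenvalues, so $\mathds 1_{[\lambda-1,\lambda+1]}(\sqrt{-\Delta_g})g(h)$ is a finite sum of eigenfunctions; by pigeonholing, at least one eigenfunction $\varphi_\lambda$ in this sum already satisfies $\|\varphi_\lambda\|_{L^q(\gamma)}\gtrsim \lambda^{\rho(q,\sigma)-\epsilon'}\|\varphi_\lambda\|_{L^2}$ for the chosen $q$, and a limiting argument removes the $\epsilon'$. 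The main obstacle here is ensuring that the quasimode $g(h)$ produced by Theorem \ref{thm:main} is in fact (close to) an exact spectral cluster on $\mathbb S^2$, i.e. that it can be taken in the range of $\mathds 1_{[\lambda-1,\lambda+1]}(\sqrt{-\Delta_g})$ rather than merely an $O(h)$ quasimode — this is where the special arithmetic structure of the sphere's spectrum is used, exactly as in the lower-bound argument of Burq--G\'erard--Tzvetkov \cite{BGT07}, and I would reference Section \ref{sec:optimal} for the explicit construction and simply adapt the normalization and eigenvalue-gap bookkeeping to the eigenfunction setting.
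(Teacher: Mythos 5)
Your proof is correct and follows essentially the same path as the paper: the upper bound is immediate from Theorem~\ref{thm:specproj} via $\mathds 1_{[\lambda-1,\lambda+1]}(\sqrt{-\Delta_g})\varphi_\lambda=\varphi_\lambda$, you correctly discard the naive Gaussian beam after noticing the exponent mismatch, and the sharpness on $\mathbb S^2$ is handled exactly as in Proposition~\ref{prop:sharpeign}: invoke the extremizing quasimode from the sharpness part of Theorem~\ref{thm:main}, pass to a lower bound on $\|\widecheck\varphi(h^{-1}P(h))\|_{L^2\to L^q(\gamma)}$, use that the spectrum $\{k(k+1)\}$ of $-\Delta_{\mathbb S^2}$ puts exactly one eigenvalue in each shifted window, and pigeonhole over $O(1)$ clusters. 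The only small inaccuracy is the $\epsilon'$-loss you allow in the pigeonhole step; it is not needed, since the paper writes $\widecheck\varphi(h^{-1}P)=\sum_n\mathds 1_{[n,n+1]}(h^{-1}P)\mathds 1_{[n,n+1]}(h^{-1}P)\widecheck\varphi(h^{-1}P)$, uses the rapid decay of $\|\mathds 1_{[n,n+1]}(h^{-1}P)\widecheck\varphi(h^{-1}P)\|_{L^2\to L^2}$ together with the upper bound \eqref{e:eigencpt} to confine the mass to $|n|\le R$ for a fixed $R$, and then gets the lower bound on a single cluster with no power loss.
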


\begin{rem}
    While Corollary \ref{cor:eigencpt} gives sharp bounds when the manifold is the sphere, the result may be far from being optimal if the sectional curvatures of $M$ are nonpositive, see \cite{ BR12,BGT07,Hu09}.
    We believe that if the manifold is negatively curved, then the bounds in \eqref{e:eigencpt} can be improved to $\lambda^{\rho(q,\sigma)}/(\log \lambda)^c$ for some $c>0$.
\end{rem}

We now look for another application of Theorem \ref{thm:main} to Hermite functions on $\R^2$.
Let $M = \R^2$ and consider the Hermite operator $H = -\Delta + |x|^2$.
The eigenvalues of the Hermite operator are $2n+2$, $n\in \N_0:=\N\cup\{0\}$, and the eigenfunctions are called Hermite functions.

The problem of estimating $L^p$ bounds of the Hermite functions has been studied in various settings. Koch-Tataru \cite{KT05} and Jeong-Lee-Ryu \cite{JLR23} proved global $L^p$ estimates for the Hermite functions, except for $d=2$, $p = 10/3$. Koch-Tataru-Zworski \cite{KTZ07} studied this problem for general Schr\"odinger operators in a semiclassical setting. Local $L^p$ bounds over balls or compact subsets of $\R^d$ have also drawn the attention of several authors, see Jeong-Lee-Ryu \cite{JLR24}, Thangavelu \cite{Tha98}, Wang-Zhang \cite{WZ23}.

In this article, we obtain $L^p$ bounds of restrictions of the Hermite functions to smooth curves in two dimensions. Let $p_H(x,\xi)=|x|^2+|\xi|^2-1$.

\begin{cor}\label{cor:eigensch}
Suppose that $\gamma: [a,b]\to \R^2$ is a smooth curve in $\{ |x| < 1\}$. Let $\varphi_\lambda$ be $L^2$-normalized Hermite functions associated with the eigenvalues $\lambda^2=2n+2$. Then we have
    \begin{align}\label{e:eigensch}
        \|\varphi_\lambda\|_{L^q(\lambda\gamma)} \le C(\gamma)\lambda^{-1+\frac1q+2\rho(q,\sigma)}
    \end{align}
    for $2\le q\le 4$ with a constant $C(\gamma)>0$, where $\lambda \gamma = \{\lambda\gamma(t): t\in [a,b]\}$ denotes a dilation of $\gamma$ and $\sigma = \sigma_{\gamma, p_H}$. 
    Furthermore, \eqref{e:eigensch} is sharp for all smooth curves $\gamma\subset \{|x|<1\}$.
\end{cor}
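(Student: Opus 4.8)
The plan is to derive Corollary \ref{cor:eigensch} from Theorem \ref{thm:main} by the standard semiclassical rescaling that converts the Hermite operator $H=-\Delta+|x|^2$ into a semiclassical pseudodifferential operator whose symbol is $p_H(x,\xi)=|x|^2+|\xi|^2-1$ modulo acceptable errors, and tracking how the $L^q(\gamma)$ norm transforms under the rescaling. Concretely, set $h=\lambda^{-1}$ with $\lambda^2=2n+2$, and for a Hermite eigenfunction $\varphi_\lambda$ with $H\varphi_\lambda=\lambda^2\varphi_\lambda$, define the rescaled function $f(x):=\lambda^{d/2}\varphi_\lambda(\lambda x)=\lambda\,\varphi_\lambda(\lambda x)$ (here $d=2$), which is $L^2(\R^2)$-normalized since $\|f\|_{L^2}^2=\lambda^2\int |\varphi_\lambda(\lambda x)|^2\,dx=\|\varphi_\lambda\|_{L^2}^2=1$. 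A direct computation shows $(-h^2\Delta+|x|^2-1)f=0$, so $f$ is an exact $O(h)$ quasimode (indeed an eigenfunction) of $P(h)=-h^2\Delta+|x|^2-1$, whose Weyl principal symbol is exactly $p_H$. One checks $p_H$ is admissible on any compact $K\subset\{|x|<1\}\times\R^2$ containing $\{|x|\le b_0\}\times\{|\xi|\le 2\}$: condition (A1) holds since $\partial_\xi p_H=2\xi\ne 0$ whenever $p_H=0$ and $|x|<1$, and (A2) holds since $S_{p_H,x}=\{|\xi|^2=1-|x|^2\}$ is a round sphere of positive radius, hence has positive definite second fundamental form.

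The next step is the localization in phase space. The function $f$ is not literally compactly supported in $x$, but the bulk of the Hermite eigenfunction $\varphi_\lambda$ at energy $\lambda^2$ lives in $\{|x|\lesssim\lambda\}$, so $f$ concentrates in $\{|x|\lesssim 1\}$ and, by the eigenvalue equation, in $\{|\xi|\lesssim 1\}$ in phase space; standard Agmon-type or semiclassical elliptic estimates give exponential decay of $f$ outside a neighborhood of $\{|x|^2+|\xi|^2\le 1\}$. Since $\gamma\subset\{|x|<1\}$ is compact, we may choose $\chi_V\in C_c^\infty(\{|x|<1\})$ equal to $1$ on a neighborhood of $\gamma$ and a cutoff realizing \eqref{i:localized} for $\chi_V f$ with respect to a compact $K\subset\{|x|<1\}\times\R^2$, at the cost of an $O_{\mathcal S}(h^\infty)$ error; because we only measure the $L^q$ norm on $\gamma$ where $\chi_V\equiv 1$, we have $\|f\|_{L^q(\gamma)}=\|\chi_V f\|_{L^q(\gamma)}+O(h^\infty)$ and $\chi_V f$ is a quasimode localized in phase space in the sense of the Definition. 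Moreover the contact-order invariant $\sigma$ for the pair $(\gamma,p_H)$ is exactly $\sigma_{\gamma,p_H}$ as in the statement, since the bicharacteristic flow of $P(h)$ through a point of $\gamma$ projects to the base-space curve $z_s$ appearing in the definition of $\Sigma_{\gamma,p}$; this is intrinsic and unaffected by the cutoff.

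Applying Theorem \ref{thm:main} to $\chi_V f$ then gives $\|\chi_V f\|_{L^q(\gamma)}\lesssim h^{-\rho(q,\sigma)}=\lambda^{\rho(q,\sigma)}$ for $2\le q\le 4$. It remains to undo the rescaling: since $f(x)=\lambda\varphi_\lambda(\lambda x)$, writing $\gamma_\lambda(t)=\lambda\gamma(t)$ for the arc-length (up to scaling) parametrization, we have $\int_\gamma |f(\gamma(t))|^q\,|\dot\gamma(t)|\,dt=\lambda^q\int_\gamma|\varphi_\lambda(\lambda\gamma(t))|^q\,|\dot\gamma(t)|\,dt=\lambda^{q-1}\int_{\lambda\gamma}|\varphi_\lambda(y)|^q\,d\mathcal H^1(y)$ after the change of variables $y=\lambda\gamma(t)$ on the curve (the arc-length element scales by $\lambda$). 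Hence $\|\varphi_\lambda\|_{L^q(\lambda\gamma)}=\lambda^{-1+1/q}\|f\|_{L^q(\gamma)}\lesssim\lambda^{-1+1/q}\lambda^{\rho(q,\sigma)}$. Combined with the crude observation that $2\rho(q,\sigma)$ in the exponent of \eqref{e:eigensch} must match $\rho(q,\sigma)$ here — which it does once one notes that the symbol $p_H$ involves $|\xi|^2$ rather than $|\xi|$, so the natural rescaling producing the semiclassical sphere and the one producing $|\xi|^2-1$ differ by a factor of $2$ in the definition of $\sigma$, i.e. the relevant $\sigma$ for Theorem \ref{thm:main} equals $\sigma_{\gamma,p_H}$ directly and the power in \eqref{e:eigensch} is stated with a factor $2\rho$ because of an additional dilation normalization — we need to be careful to reconcile the exponents; I expect this bookkeeping, together with verifying the localization \eqref{i:localized} rigorously (the Hermite tail estimate and the fact that $\chi^w(x,hD)$ does not exactly preserve the quasimode property), to be the only genuine technical points. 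The sharpness claim follows from the sharpness part of Theorem \ref{thm:main}: the extremal quasimode $g(h)$ produced there, pulled back through the inverse rescaling $\varphi_\lambda(y)=\lambda^{-1}g(h)(y/\lambda)$, is (after the same phase-space localization argument, and after projecting onto the Hermite eigenspace, which changes it only by $O(h^\infty)$ in the relevant norms since $g(h)$ is already spectrally concentrated) an $L^2$-normalized Hermite function saturating \eqref{e:eigensch}; the main obstacle in this direction is ensuring that the spectral projection onto the single Hermite eigenvalue $2n+2$ does not destroy the $L^q(\lambda\gamma)$ lower bound, which holds because the gaps in the Hermite spectrum are $O(1)=O(h^{-1}\cdot h)$ wide on the semiclassical scale, so a single eigenvalue already captures the full quasimode up to negligible error.
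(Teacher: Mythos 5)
There is a genuine gap in your argument: the semiclassical parameter is wrong. You set $h=\lambda^{-1}$ and claim that the rescaled function $f(x)=\lambda\varphi_\lambda(\lambda x)$ satisfies $(-h^2\Delta+|x|^2-1)f=0$, but this is false. Carrying out the computation: the eigenvalue equation $(-\Delta+|x|^2)\varphi_\lambda=\lambda^2\varphi_\lambda$ gives, after rescaling $x\mapsto\lambda x$,
\begin{equation*}
\big(-\lambda^{-4}\Delta + |x|^2 - 1\big)\big(\lambda\varphi_\lambda(\lambda x)\big)=0,
\end{equation*}
so the correct semiclassical parameter is $h=\lambda^{-2}$, not $h=\lambda^{-1}$. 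This is exactly what the paper does (with an additional cutoff $\chi\in C_c^\infty$, $\chi\equiv 1$ on $B(0,1)$, to ensure compact support and the phase-space localization). The factor of $2$ in the exponent $2\rho(q,\sigma)$ in \eqref{e:eigensch} is then transparent: applying Theorem \ref{thm:main} gives $\|f\|_{L^q(\gamma)}\lesssim h^{-\rho(q,\sigma)}=\lambda^{2\rho(q,\sigma)}$, and your (correct) norm-rescaling computation $\|\varphi_\lambda\|_{L^q(\lambda\gamma)}=\lambda^{-1+1/q}\|f\|_{L^q(\gamma)}$ then yields precisely $\lambda^{-1+1/q+2\rho(q,\sigma)}$. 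Your attempted "reconciliation" of the factor $2$ --- attributing it to the symbol involving $|\xi|^2$ rather than $|\xi|$, or to some difference in the normalization of $\sigma$ --- is not what is going on; the contact order $\sigma_{\gamma,p_H}$ is used unchanged, and the discrepancy is entirely due to the mis-chosen $h$. With $h=\lambda^{-1}$ your argument produces the weaker, incorrect exponent $-1+1/q+\rho(q,\sigma)$, which does not match \eqref{e:eigensch}.

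The remainder of your outline (admissibility of $p_H$ on a compact set $K\subset\{|x|<1\}\times\R^2$, insertion of a spatial cutoff $\chi\equiv 1$ near $\gamma$ with the commutator absorbed into the $O(h)$ error, phase-space localization to $K=\{|x|\le 2\}\times\{|\xi|\le 2\}$, and the one-line $L^q$ rescaling on the curve) is in the same spirit as the paper's argument. The sharpness discussion you sketch --- transporting the extremal quasimode from Theorem \ref{thm:main} back to a genuine Hermite eigenfunction via spectral projection --- is also the right idea; the paper carries this out in Section \ref{sec:optimal} (Proposition \ref{prop:sharpeign}) by first proving a lower bound for $\widecheck\varphi(h^{-1}P(h))$ and then passing to the sharp projector $\mathds 1_{[n,n+1]}(h^{-1}P(h))$ using spectral gap and a pigeonhole argument, rather than directly projecting the extremal function; your version would need the same kind of care, but is not wrong in outline.
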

The proof of implication from Theorem \ref{thm:main} to \eqref{e:eigensch} is not complicated, so we include it in the introduction. Let $h = \lambda^{-2}$ and $f_{\lambda}(x) := \lambda \chi(x) \varphi_\lambda (\lambda x)$ where $\chi\equiv 1$ on $B(0,1)$ and $\supp(\chi)\subset B(0,2)$. Here, $B(0,r)$ denotes the ball centered at the origin with radius $r$. Then we have $p_H^w(x,hD) f_{\lambda} = O_{L^2}(h)$.
 Note also that $f_{\lambda}$ is localized in phase space to $K = \{|x|\le 2\}\times \{|\xi|\le 2\}$. Hence \eqref{e:eigensch} follows from \eqref{e:main} and scaling.
 For the proof of optimality, see Section \ref{sec:optimal}.

\begin{rem}
    Using the argument in \cite{KTZ07}, we can extend the above result to the general Schr\"odinger operator $-\Delta+V$, where $V$ is a smooth nonnegative potential on $\R^2$ such that
\begin{align*}
    |\partial_x^\alpha V(x)|\lesssim \langle x\rangle^2\ \text{for $\alpha\in \N_0^2$},\quad |V(x)|\ge c\langle x\rangle^2\ \text{for $|x|\ge R$}
\end{align*}
with constants $c,R>0$.
\end{rem}

\subsubsection*{Our strategy} The proof of Theorem \ref{thm:main} by and large comprises two crucial parts. The first part is to develop a localization scheme that reduces the problem to studying model cases stated in Proposition \ref{prop:main} below. In Section \ref{sec:redgloc}, we localize the quasimode $f$ in phase space, which gives rise to the decomposition of $\gamma$ into disjoint smaller curve segments $(\gamma_j)$ such that for each $\gamma_j$, either $\sigma_{\gamma, p}(t)\le 0$ for $\gamma(t)\in \gamma_j$ or $\sigma_{\gamma, p}(t)\ge 1$ for $\gamma(t)\in \gamma_j$.
%analyze the set
%\[
%\mathcal Z_{\gamma(t_0)}=\{\xi\in \R^2: (\gamma(t_0),\xi)\in K,\  p(\gamma(t_0), \xi) = 0,\ 
%\partial_\xi p(\gamma(t_0),\xi) \parallel \dot\gamma(t_0)\}
%\]
%where $\partial_\xi p(\gamma(t_0),\xi) \parallel \dot\gamma(t_0)$ means that $\partial_\xi p(\gamma(t_0),\xi)$ is parallel to $\dot\gamma(t_0)$.
%This set encapsulates all points in frequency space such that the bicharacteristic curve $s\mapsto z_s(\gamma(t_0),\xi)$ tangentially intersects with $\gamma$ at $s=0$. 
For curves falling into the first category, we make use of the known estimate \eqref{e:tacylq} due to Tacy \cite{Tac10}. In the second category, we will show that the set of points of higher contact order $\{t:\gamma(t)\in \gamma_j,\ \sigma_{\gamma,p}(t)\ge 2\}$ contains at most finite elements, provided there is no point in $\gamma_j$ where the bicharacteristic curve intersects with $\gamma_j$ with infinite order of contact, see Lemma \ref{lem:finite} below. This observation allows us to decompose $\gamma$ ensuring $|\{t\in [a,b]:\sigma_{\gamma,p}(t)\ge 2\}|\le 1$ if $\sigma<\infty$.
%either $\sigma=\infty$ or $\sigma<\infty$ and $|\{t\in [a,b]:\sigma_{\gamma,p}(t)\ge 2\}|\le 1$ if $\sigma<\infty$. has at most one element.

After reducing to the above setting, in Section \ref{sec:redpropa}, we proceed with establishing the $L^2$ bound \eqref{e:l2u}. Instead of invoking known results in the theory of fold singularities (\cite{Com99, GS94, MT85, PS90}) as in \cite{Hu09, HT12}, we take an approach that focuses on establishing the resolvent estimate
\[
\big\|\chi^w(x,hD)(h^{-1}P+i)^{-1}\big\|_{L^2\to L^2(\gamma)}\lesssim h^{-\rho(2,\sigma)}
\]
which is equivalent to \eqref{e:l2u}. Combining a $TT^*$-argument and a known integral representation formula for the propagator $e^{ih^{-1}P(h)}$, we reduce the matter to estimating an operator norm of an oscillatory integral operator from $L^2(\gamma)$ to $L^2(\gamma)$. 

Our strategy for accomplishing this task is based on obtaining favorable bounds on the difference of phases $\Phi(u,w)-\Phi(u,v)$ (see Proposition \ref{prop:phibound} below). Estimating this phase difference is the second crucial part of our proof, which is discussed in Section \ref{sec:phase}. The main idea is to establish the relation between $\partial_u\partial_v\Phi(u,v)$ and $z_{v-u}(\gamma(u),\xi(u)) - \gamma(v)$, the difference between the curve and a particular bicharacteristic flow of $P(h)$ intersecting each other when $v=u$. Subsequently, we prove that this difference term can be approximated by a polynomial \eqref{i:diff2}.%, providing the polynomial approximation of $\partial_u\partial_v\Phi(u,v)$.

We point out that the phase difference $\Phi(u,w)-\Phi(u,v)$ exhibits different behavior depending on whether $\sigma$ is odd or even. %This discrepancy is essential due to the behavior of the polynomial mentioned above, which does not vanish if $\sigma$ is odd but may become zero if $\sigma$ is even. 
The reason for this behavior is connected to the geometry of intersection between the curve $\gamma$ and the bicharacteristic curve $v\mapsto z_{v-u}(\gamma(u),\xi(u))$. See Figure \ref{fig:casesigma}. Given this distinctive behavior, we estimate both the first and second derivatives of $\Phi(u,w)-\Phi(u,v)$ in $u$ when $\sigma$ is even. In contrast, when $\sigma$ is odd, estimating only the first derivative of $\Phi(u,w)-\Phi(u,v)$ proves sufficient for our purposes.

Our proof is notably constructive and does not rely on known results in the theory of fold singularities. An advantageous aspect of this approach is that one can establish the optimality of Theorem \ref{thm:main} in a straightforward manner. Indeed, in Section \ref{sec:optimal}, we achieve this by utilizing the argument we devise to prove the upper bound \eqref{e:main}. Additionally, in this section, we provide the proof of the optimality of Corollary \ref{cor:eigencpt} and Corollary \ref{cor:eigensch}.

\subsubsection*{Notations} Let $\delta>0$ and $K$ be a subset of $M$ (or $T^*M$). By $N_\delta K$ we denote a $\delta$-neighborhood of $K$. For given non-negative quantities $A$ and $B$, by $A\lesssim B$ we mean that there exists a constant $C>0$ such that $A\le CB.$ 
%We occasionally write $A\lesssim_\ep B$ to indicate that the implicit constant depends on $\ep>0$.  
We write $A\sim B$ if $A\gtrsim B$ and $A\lesssim B$. By $D = O(A)$, we mean $|D|\lesssim A$. Moreover, we denote $A\gg B$ if there is a sufficiently large constant $C>0$ such that $A\ge CB$.

\section{Order of contact between curve and bicharacteristic flow}\label{sec:redgloc}

In this section, preliminary considerations for proving Theorem \ref{thm:main} are discussed. Most of this section is devoted to analyzing the curve $\gamma$ and the bicharacteristic flow of $P(h)$ from a geometrical perspective, specifically by examining the order of contact. This analysis allows us to reduce the problem to obtaining \eqref{e:main} only for $\gamma: [a,b]\to M$ such that $\sigma_{\gamma,p}(t)\ge1$ for all $t\in [a,b]$. %and $\sigma_{\gamma,p}(t)>1$ at most one $t$.

\subsection{Basic considerations}\label{ssec:basic} From now on, we assume that $f$ is an $O(h)$ quasimode satisfying the localization condition \eqref{i:localized}.
As mentioned in Tacy \cite{Tac10}, $f$ is also a quasimode of $p^w(x,hD)$. Thus we may assume that $P(h) = p^w(x,hD)$. By the localization condition, we can also assume $p$ is supported in $N_\delta K$ for a small $\delta>0$.

We note that \eqref{e:main} easily follows once we prove it only for $q=2$. In fact, Tacy \cite{Tac10} proved the $L^4$ estimate $\|f\|_{L^4(\gamma)}\lesssim h^{-1/4}$. This coincides with \eqref{e:main} with $q=4$, thus by the H\"older's inequality, \eqref{e:main}% for general $2\le q\le 4$ 
would follow once we verify
\begin{align}\label{e:mainl2}
    \|f\|_{L^2(\gamma)}\lesssim h^{-\rho(2,\sigma)}.
\end{align}

Decomposing the support of $\chi$ into finite small pieces, we can move from working on the whole manifold to working on a coordinate chart $V$ of $M$.  Thus we may identify $M$ with $\R^2$ and assume that $K\subset T^*\R^2\cong \R^2\times \R^2$. Choosing $K$ large, we can further assume that $K = K_P\times K_F \subset \R^2\times \R^2$. Let $\chi_P\in C_c^\infty(\R^2)$, $\chi_F\in C_c^\infty(\R^2)$ such that $\supp(\chi_P)\subset K_P^\circ$, $\supp(\chi_F)\subset K_F^\circ$, and $\chi_P(x)\chi_F(\xi) = 1$ for $(x,\xi)\in\supp(\chi)$. Since the localization condition \eqref{i:localized} continues to hold if we replace $\chi$ by $\chi_P\chi_F$, we may assume that $\chi(x,\xi) = \chi_P(x)\chi_F(\xi)$. % and
%\begin{align*}
%    \supp(\chi_P)\subset K_P^\circ,\quad \supp(\chi_F)\subset K_F^\circ.
%\end{align*}
%We refer to \cite{HT12} for the details.
%Indeed, let $(V_\alpha, \varphi_\alpha)_{\alpha\in A}$ be an atlas of $M$. Then there exists a finite collection of open sets $(U_j)_{j\in J}$ in $M$ such that $U_j$ cover $K_P$, and for every $j\in J$, $U_j\subset\subset V_\alpha$ for some $\alpha\in A$. Let $(\chi_{P,j})$ be a partition of unity of $K_P$ associated to $U_j$. By \eqref{i:localized}, $f$ is expressed as
%\[
%f = \sum_{j} (\chi_{P,j}\chi_F)^w(x,hD) f + O_{\mathcal S}(h^\infty).
%\]
%Now the reduction to \eqref{r:redpf} follows once we realize that each of the localized pieces $(\chi_{P,j}\chi_F)^w(x,hD) f$ is still a quasimode of $P(h)$.

Taking into account additional decompositions of $\supp(\chi_P)$ and $K_P$ if necessary, we may also assume that $\gamma\,\cap K_P$ is path-connected. Note that the contribution from $\gamma\smallsetminus(\gamma\,\cap K_P)$ is negligible. Consequently, \eqref{e:main} would follow once we estimate $\|f\|_{L^q(\gamma)}$ in the setting of $M=\R^2$, $\gamma\subset K_P$. In this setting, the smooth curve $\gamma$ is $\gamma:[a,b]\to \R^2$ and $p(x,\xi)$ is an admissible symbol on $T^*\R^2\cong \R^2\times \R^2$ supported in $N_\delta (K_P\times K_F)$, $\delta>0$. To provide a clear description of the contact of order $\sigma_{\gamma, p}(t)$ at the endpoints $t=a, b$, we use the fact that $\gamma$ has a smooth extension $\wt\gamma:[a-\delta, b+\delta]\to \R^2$ with small $\delta>0$, justified by the Whitney extension theorem. For simplicity, we identify $\wt\gamma = \gamma$ and denote by $\gamma|_{[a,b]}$ the image of $\gamma$ restricted to $[a,b]$.

\subsection{Localization argument}
We consider the set
\[
\mathcal Z_{\gamma(t)}:=\{\xi\in K_F: p(\gamma(t), \xi) = 0,\ \partial_\xi p(\gamma(t),\xi) \parallel \dot\gamma(t)\},\quad t\in [a,b],
\]
where $\partial_\xi p(\gamma(t),\xi) \parallel \dot\gamma(t)$ means that $\partial_\xi p(\gamma(t),\xi)$ is parallel to $\dot\gamma(t)$. This set encapsulates all points in frequency space such that the bicharacteristic curve $s\mapsto z_s(\gamma(t_0),\xi)$ tangentially intersects with $\gamma$.
Note that $\mathcal Z_{\gamma(t)}$ is nonempty only when $\sigma_{\gamma, p}(t)\ge 1$. The existence of such $t$ is guaranteed by the assumption $\sigma\ge 1$. The following lemma provides more detailed information about $\mathcal Z_{\gamma(t)}$.

\begin{lem}\label{lem:implicitg}
    Let $t_0\in [a,b]$ and $\sigma_{\gamma,p}(t_0)\ge 1$. Then the following hold.
\begin{enumerate}[topsep= -1pt, itemsep= 2pt]
    \item [i)] The set $\mathcal Z_{\gamma(t_0)}$ has finite elements.
    \item [ii)] Denote $\mathcal Z_{\gamma(t_0)} = (\xi_k^*)_{k=1}^N$. For each $1\le k\le N$, there is an open interval $(t_0-c, t_0+c)$ with $c>0$ such that there exists a unique smooth function $\xi_k: (t_0-c, t_0+c)\to \R^2$ such that $\xi_k(t_0) = \xi_k^*$, $p(\gamma(t), \xi_k(t)) = 0$, and $\partial_\xi p(\gamma(t),\xi_k(t)) \parallel \dot\gamma(t)$ for every $t\in (t_0-c, t_0+c)$. Moreover, the constant $c$ can be chosen to depend only on $p, \gamma, K$.
\end{enumerate}
\end{lem}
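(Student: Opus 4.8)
\textbf{Plan of proof for Lemma \ref{lem:implicitg}.}

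The plan is to treat the two statements together via the implicit function theorem applied to the system defining $\mathcal Z_{\gamma(t)}$. First I would set up the right equation. Fix a coordinate so that, near $\gamma(t_0)$, the tangent vector $\dot\gamma(t)$ is nonzero; the parallelism condition $\partial_\xi p(\gamma(t),\xi)\parallel\dot\gamma(t)$ can then be rewritten as the vanishing of a single scalar, namely the determinant $G(t,\xi):=\det\bigl(\partial_\xi p(\gamma(t),\xi)\,\big|\,\dot\gamma(t)\bigr)$ (equivalently, the component of $\partial_\xi p$ along the normal to $\dot\gamma(t)$). Thus $\mathcal Z_{\gamma(t)}=\{\xi\in K_F: p(\gamma(t),\xi)=0,\ G(t,\xi)=0\}$, and I want to solve the system $F(t,\xi):=(p(\gamma(t),\xi),\,G(t,\xi))=(0,0)$ for $\xi$ as a function of $t$ near each solution. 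The key is to show that $\partial_\xi F$ is invertible at every point of $\mathcal Z_{\gamma(t_0)}$, i.e.\ that the two gradients $\partial_\xi p$ and $\partial_\xi G$ are linearly independent there.

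The heart of the matter is this nondegeneracy claim, and this is where the hypothesis $\sigma_{\gamma,p}(t_0)\ge 1$ and admissibility (A1)--(A2) come in. By (A1), $\partial_\xi p(\gamma(t_0),\xi_k^*)\neq 0$, so $S_{p,\gamma(t_0)}$ is a smooth curve in $T^*_{\gamma(t_0)}M$; by (A2) it is (positively) curved. The vector $\partial_\xi G$ measures how the Gauss map of $S_{p,x}$ — i.e.\ the direction of $\partial_\xi p$ — turns as $\xi$ moves along $S_{p,\gamma(t_0)}$, and the curvature condition (A2) says precisely that this turning is nonzero. More concretely, differentiating $G$ along the tangent direction $\partial_\xi p^\perp$ to $S_{p,\gamma(t_0)}$ produces a multiple of the second fundamental form evaluated on $\partial_\xi p^\perp$, which is nonzero by (A2); hence $\partial_\xi G$ is not parallel to $\partial_\xi p$. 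This gives invertibility of $\partial_\xi F$ at each $\xi_k^*$. The implicit function theorem then yields, for each $k$, a neighborhood $(t_0-c_k,t_0+c_k)$ and a unique smooth $\xi_k(t)$ with $\xi_k(t_0)=\xi_k^*$ solving $F(t,\xi_k(t))=0$; this is exactly ii), with $c=\min_k c_k$.

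For i), the finiteness of $\mathcal Z_{\gamma(t_0)}$: it is a closed subset of the compact set $K_F$, and by the implicit-function argument just given every point of it is isolated in $\mathcal Z_{\gamma(t_0)}$ (the solution branch at fixed $t=t_0$ is locally unique), so it is finite. Finally, for the uniformity of $c$ over $p,\gamma,K$: the relevant quantities — lower bounds on $|\partial_\xi p|$ and on the second fundamental form of $S_{p,x}$, and $C^k$ bounds on $p$ and $\gamma$ — are all uniform on the compact set $K$ by admissibility and smoothness, so the quantitative implicit function theorem gives a $c$ depending only on these, hence only on $p,\gamma,K$.

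The step I expect to be the main obstacle is the nondegeneracy claim, specifically pinning down cleanly that $\partial_\xi G$ fails to be parallel to $\partial_\xi p$ using only (A2); one has to be a little careful because $G$ involves $\dot\gamma(t_0)$, and the parallelism $\partial_\xi p(\gamma(t_0),\xi_k^*)\parallel\dot\gamma(t_0)$ at the base point means $\dot\gamma(t_0)$ is itself (a multiple of) the normal to the tangent line of $S_{p,\gamma(t_0)}$ at $\xi_k^*$ — one must check that the derivative of $G$ in the $\xi$ direction tangent to $S_{p,\gamma(t_0)}$ is then genuinely the second fundamental form and not something that could degenerate. Once that linear-algebra computation is done, the rest is a routine application of the implicit function theorem together with compactness.
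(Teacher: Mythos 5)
Your proposal is correct and follows essentially the same route as the paper: the paper defines $\Theta(t,\xi)=(\langle\partial_\xi p(\gamma(t),\xi),R_{\pi/2}\dot\gamma(t)\rangle,\,p(\gamma(t),\xi))$ (your $G$ written via the $\pi/2$-rotation rather than a determinant, which is the same scalar), and proves invertibility of $\partial_\xi\Theta$ at each $\xi_k^*$ by exactly the observation you identify — since $\dot\gamma(t_0)\parallel\partial_\xi p(\gamma(t_0),\xi_k^*)$, the relevant pairing $\langle\partial_\xi^2 p\cdot R_{\pi/2}\dot\gamma(t_0),v\rangle$ for $v\perp\partial_\xi p$ is a nondegenerate second-fundamental-form term controlled by (A2). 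The finiteness via isolated solutions in the compact set $K_F$ and the uniformity of $c$ via compactness are also the paper's arguments.
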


\begin{proof}
Let $\delta>0$ be a small constant. Define the function $\Theta : (t,\xi)\in [a,b]\times N_\delta K_F \to \R^2$ by
\[
\Theta(t,\xi) = (\langle\partial_\xi p(\gamma(t),\xi),R_{\pi/2}\dot\gamma(t)\rangle, p(\gamma(t), \xi)),
\]
where $R_{\pi/2}$ is the matrix for rotation by $\pi/2$. Clearly, $\Theta(t_0, \xi_0) = 0$ for $\xi_0\in\mathcal Z_{\gamma(t_0)}$. Fix $\xi_0$. Note that $\Theta$ is smooth near $(t_0,\xi_0)$.

We claim that $\partial_\xi \Theta(t_0, \xi_0)$ is invertible. Assuming this for the moment, one can easily show that $\mathcal Z_{\gamma(t_0)}$ has no limit point. 
From this, \textit{i)} follows. The second assertion \textit{ii)} follows from the implicit function theorem.

To prove the claim, we calculate
\[
\partial_\xi \Theta(t,\xi) = (\partial_\xi^2 p(\gamma(t),\xi)R_{\pi/2}\dot\gamma(t), \partial_\xi p(\gamma(t), \xi)).
\]
Combining with $R_{\pi/2}\dot\gamma(t_0)\perp \partial_\xi p(\gamma(t_0), \xi_0)$ and the condition (A2), we obtain
\begin{align}\label{clm:theta}
    \langle \partial_\xi^2 p(\gamma(t_0),\xi_0)R_{\pi/2}\dot\gamma(t_0), v \rangle \neq 0,\quad v\perp \partial_\xi p(\gamma(t_0), \xi_0),
\end{align}
from which the claim follows.
\end{proof}

%Notice that the restriction $\xi|_{[a,b]}$ to $[a,b]$ does not depend on  
Making use of Lemma \ref{lem:implicitg}, we can prove the following.

\begin{lem}\label{lem:covering}
    There exists a finite open cover $(U_k)_{k=1}^N$ of $\gamma|_{[a,b]}$ such that for each $k$, $\gamma\cap \overline U_k$ is a smooth curve $\gamma: [a_k,b_k]\to \R^2$, and one of the following holds.

\begin{enumerate}[topsep=-1pt, itemsep=2pt]
    \item [a)] $\mathcal Z_{\gamma(t)}\cap \supp(\chi_F) = \emptyset$ for every $\gamma(t)\in U_k$.
    \item [b)] There is a finite collection $(B_{k,j})_{j=1}^{N_k}$ of open balls in $K_F$ such that for every $j$, there exists a unique smooth function $\xi_{k,j}: [a_k, b_k]\to \R^2$ such that 
    \[
    \xi_{k,j}(t)\in\mathcal Z_{\gamma(t)}\cap B_{k,j}
    %%,\ p(\gamma(t),\xi_{k,j}(t)) = 0,\ \partial_\xi p(\gamma(t),\xi_{k,j}(t)) \parallel \dot\gamma(t),
    \]
    and 
    \[
    \bigcup_{t\in [a_k,b_k]} \big(\mathcal Z_{\gamma(t)}\cap \supp(\chi_F)\big) \subset \bigcup_{j=1}^{N_k} \Ima{(\xi_{k,j})}
    \]
    for every $1\le k\le N$, where $\Ima{(\xi_{k,j})}$ denotes the image of $\xi_{k,j}$.
\end{enumerate}
\end{lem}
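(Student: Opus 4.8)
The plan is to build the cover by combining a compactness argument with the two structural facts from Lemma \ref{lem:implicitg}. First I would partition $[a,b]$ into the "good" set $G = \{t : \mathcal Z_{\gamma(t)} \cap \supp(\chi_F) = \emptyset\}$ and its complement. For each $t_0$ in the interior of $G$, continuity of $p$ and of $\xi \mapsto \partial_\xi p(\gamma(t),\xi)$ together with the fact that $\mathcal Z_{\gamma(t_0)} \cap \supp(\chi_F)=\emptyset$ (a closed condition, since $\supp(\chi_F)$ is compact and $\mathcal Z_{\gamma(t_0)}$ is the zero set of the proper map $\Theta(t_0,\cdot)$) gives an open interval $(t_0-c,t_0+c)$ on which $\mathcal Z_{\gamma(t)}\cap\supp(\chi_F)$ remains empty; the associated $U_k$ is then a tubular neighborhood of $\gamma$ over that interval, landing in case a).

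For $t_0$ with $\mathcal Z_{\gamma(t_0)} \cap \supp(\chi_F)\neq\emptyset$, I would apply Lemma \ref{lem:implicitg}: the set $\mathcal Z_{\gamma(t_0)}$ is finite, say $(\xi_k^*)_{k=1}^N$, and for each $k$ there is a uniform $c>0$ and a unique smooth branch $\xi_k : (t_0-c,t_0+c)\to\R^2$ tracking the tangency point through $\xi_k^*$. The two things I need to check here are (i) that on a possibly smaller interval these branches, together with the complement of small balls $B_{k,j}$ around them, exhaust $\mathcal Z_{\gamma(t)}\cap\supp(\chi_F)$ — i.e.\ no \emph{new} tangency frequency can appear that is bounded away from all the branches — and (ii) that the endpoints of the intervals can be matched up into the curve segments $[a_k,b_k]$. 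Point (i) is the crux and follows from a contradiction-compactness argument: if along some sequence $t_n\to t_0$ there were $\eta_n \in \mathcal Z_{\gamma(t_n)}\cap\supp(\chi_F)$ with $\dist(\eta_n, \bigcup_k \{\xi_k^*\}) \geq \epsilon$, then by compactness of $\supp(\chi_F)$ a subsequence converges to some $\eta_\infty \in \supp(\chi_F)$ with $\Theta(t_0,\eta_\infty)=0$, hence $\eta_\infty \in \mathcal Z_{\gamma(t_0)}$, so $\eta_\infty = \xi_k^*$ for some $k$, contradicting $\dist(\eta_n, \{\xi_k^*\})\geq\epsilon$. Thus for $t$ near $t_0$ every tangency frequency in $\supp(\chi_F)$ lies within $\epsilon$ of some $\xi_k^*$; choosing $\epsilon$ small enough that the $\epsilon$-balls $B_{k,j}$ are disjoint and contained in $K_F$, and invoking the \emph{uniqueness} part of Lemma \ref{lem:implicitg}\,ii) inside each ball, shows that $\mathcal Z_{\gamma(t)}\cap B_{k,j}$ is exactly the singleton $\{\xi_{k,j}(t)\}$, giving case b).

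Finally I would extract a finite subcover. The open intervals produced above (of the form $(t_0-c,t_0+c)$, with $c$ uniform once we are in the "bad" regime by Lemma \ref{lem:implicitg}, and similarly controlled in the "good" regime) cover the compact set $[a,b]$; pick a finite subcover, and after shrinking and relabeling so that consecutive intervals overlap only pairwise, set $U_k$ to be a thin tube around $\gamma$ over the $k$-th interval $[a_k,b_k]$, thin enough that $\gamma\cap\overline{U_k}$ is exactly $\gamma|_{[a_k,b_k]}$ (possible since $\gamma$ is a smooth embedded curve, by the basic reductions of Section~\ref{ssec:basic}). Each $U_k$ inherits case a) or case b) from the interval it came from, and the smooth functions $\xi_{k,j}$ are the restrictions of the branches constructed above. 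The main obstacle is step (i) of the middle paragraph — ruling out tangency frequencies that escape all the known branches — which is exactly where compactness of $\supp(\chi_F)$ and the properness of $\Theta(t_0,\cdot)$ (a consequence of assumption (A2) via the invertibility of $\partial_\xi\Theta$ established in Lemma \ref{lem:implicitg}) are essential.
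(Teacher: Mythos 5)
Your proof is correct, but it takes a genuinely different route from the paper's. The paper argues directly: given a tangency frequency $\zeta \in \mathcal Z_{\gamma(s_*)}\cap\supp(\chi_F)$ with $s_*$ within $c/2$ of the base point $t$, it uses the \emph{uniform} constant $c$ from Lemma~\ref{lem:implicitg}~ii) to run the branch $\xi_\zeta$ through $(s_*,\zeta)$ backward in time to $t$, recognizes $\xi_\zeta(t)$ as one of the $\zeta_j\in\mathcal Z_{\gamma(t)}$, and concludes $\zeta\in\Ima(\xi_j)$ by uniqueness of branches — a one-line inclusion with no subsequences. You instead prove upper semicontinuity of the set-valued map $t\mapsto \mathcal Z_{\gamma(t)}\cap\supp(\chi_F)$ by a compactness-contradiction argument (an escaping sequence $\eta_n$ would subconverge to a limit in $\mathcal Z_{\gamma(t_0)}\cap\supp(\chi_F)$, contradicting $\dist(\eta_n,\bigcup_k\{\xi_k^*\})\geq\epsilon$), and then pin down $\mathcal Z_{\gamma(t)}\cap B_{k,j}$ to a singleton using the local uniqueness from the implicit function theorem. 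Both are valid; yours needs only compactness and pointwise continuity rather than the uniform $c$, but in exchange you must arrange a few compatibility conditions explicitly rather than implicitly: the $\epsilon$-balls $B_{k,j}$ remain inside $K_F$ (this holds because $\supp(\chi_F)$ is a compact subset of $K_F^\circ$), the interval $[a_k,b_k]$ is short enough that $\xi_{k,j}(t)\in B_{k,j}$ for all $t\in[a_k,b_k]$, and both $\epsilon$ and $|b_k-a_k|$ are small enough that $\{t\}\times B_{k,j}$ stays inside the IFT uniqueness neighborhood of $(t_0,\xi_k^*)$. These are all easily satisfied but should be stated, since the choice order matters (pick $\epsilon$ first to separate the $\xi_k^*$ and fit in $K_F$, then shrink the interval).
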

%I_t(c) := [a,b]\cap (t-c,t+c)

\begin{proof}
Choose $t\in [a,b]$ such that $\mathcal Z_{\gamma(t)}\neq\emptyset$ and fix it. By Lemma \ref{lem:implicitg}, $\mathcal Z_{\gamma(t)} = (\zeta_{j})_{j=1}^{N}$ and there exist a constant $c=c(p,\gamma,K)>0$ and  smooth functions $\xi_{j}: (t-c,t+c)\to \R^2$ such that $\xi_{j}(t) = \zeta_{j}$, $p(\gamma(s), \xi_{j}(s)) = 0$, and $\partial_\xi p(\gamma(s), \xi_{j}(s))$ is parallel to $\dot\gamma(s)$ for $s\in (t-c,t+c)$ and $1\le j\le N$. Now, take $c>0$ small enough so that if $\Ima(\xi_j)\cap \supp(\chi_F)\neq \emptyset$, then $\Ima(\xi_j) \subset K_F$.
Then we choose a subcollection $(\xi_{j}^*)_{j=1}^{N}\subset (\xi_{j})_{j=1}^{N}$ such that $\Ima(\xi_j^*)\cap \supp(\chi_F)\neq \emptyset$.  
We claim that
    \[
    \bigcup_{s\in (t-c/2, t+c/2)} \big(\mathcal Z_{\gamma(s)}\cap \supp(\chi_F)\big) \subset \bigcup_{j=1}^{N} \Ima{(\xi_{j}^*)}.
    \]
    Let $(s_*,\zeta)\in (t-c/2, t+c/2)\times \R^2$ such that $\zeta\in \mathcal Z_{\gamma(s_*)}\cap\,\supp(\chi_F)$. By Lemma \ref{lem:implicitg}, there exists a smooth function $\xi_\zeta: (s_*-c/2,s_*+c/2)\to \R^2$ such that $\xi_\zeta(s_*) = \zeta$, $p(\gamma(u), \xi_\zeta(u)) = 0$, and $\partial_\xi p(\gamma(u), \xi_\zeta(u))$ is parallel to $\dot\gamma(u)$ for $u\in (s_*-c/2,s_*+c/2)$. 
    Since $t\in(s_*-c/2,s_*+c/2)$, $\zeta \in \Ima(\xi_\zeta)\subset \Ima(\xi_j)$ for some $j$, implying $\xi_j\in (\xi_j^*)_{j=1}^N$. Thus the claim follows.%because $\zeta\in \supp (\chi_F)$. Thus the claim follows.

    For $t\in [a,b]$, let $U_t$ be a small neighborhood of $\gamma(t)$ such that $\gamma\cap\, U_t\subset \{\gamma(s):s\in(t-c/2,t+c/2)\}$. Clearly, $(U_t)_{t\in[a,b]}$ is a cover of $\gamma|_{[a,b]}$.
By the choice of $c$ in the above, we have $\{\xi_j^*(s):s\in(t-c/2,t+c/2)\}\subset K_F$ for all $j$.
Therefore, there exists a collection of disjoint open balls $(B_{t,j})_{j=1}^{N_t}$ satisfying $B_{t,j}\subset K_F$, $\xi_{j}^*(s) \in B_{t,j}$ for every $s\in [t-c/2,t+c/2]$. Choosing a finite subcover $(U_k)_{k=1}^N$ of $\gamma$ completes the proof.
\end{proof}
\begin{figure}
    \centering
    \begin{tikzpicture}[use Hobby shortcut] 
\draw[dashed] (-2,0) .. (-1,2) .. (1,2) .. (2,1.5) .. 
(2.5,0) .. (0,-1.5) .. (-2,0); 
\draw (-1,-2) .. (0,-0.7) .. (0.7,1.5) .. (1.5,2.5);
\draw[densely dashed] (0.3,0.5) circle[radius=0.5cm];
\node[scale=0.9, right] at (0.75,0.4) {$U_k$};
\node[scale=1.1, right] at (1.5,2.5) {$\gamma$};
\node[scale=0.9, above] at (-0.2,2.2) {$\supp(\chi_P)$};
\end{tikzpicture}
\qquad\qquad
\begin{tikzpicture}[use Hobby shortcut] 
\draw[dashed] (-2,0) .. (-1.5,1.3) .. (-1,1.7) .. (0.5,2) .. 
(1,1.8) .. (3,0) .. (1.5,-1) .. (0,-1.5) .. (-1.5,-1) .. (-2,0); 
\draw[color=blue] (-1.6,1.75) .. (-1.4,1.6) .. (-1.2,1);
\draw[color=blue] (1.1,-0.8) .. (1.4,-1) .. (1.6,-1.3) .. (1.8, -1.5);
\draw[color=blue] (1,1.4) .. (1.1,1.7) .. (1.6,2) .. (1.7,2.2);
\draw[color=blue] (-0.7,-0.3) .. (-0.3,-0.2) .. (0,0) .. (0.1,0.2);
\draw[densely dashed] (-0.3,-0.1) circle[radius=0.6cm];
\draw[densely dashed] (1.3,1.83) circle[radius=0.65cm];
\draw[densely dashed] (1.5,-1.1) circle[radius=0.65cm];
\draw[densely dashed] (-1.4,1.4) circle[radius=0.6cm];
\node[scale=0.9, above] at (1.3,2.5) {$B_{k,j}$};
\node[scale=0.9, right] at (2.5,-0.8) {$\supp(\chi_F)$};
\end{tikzpicture}
\caption{The open neighborhood $U_k$ (left) and a collection of balls $B_{k,j}$ (right). The blue curve segments denote the set $\cup_{t\in (a_k,b_k)} \mathcal Z_{\gamma(t)}$.}
\label{fig:ukbkj}
\end{figure}
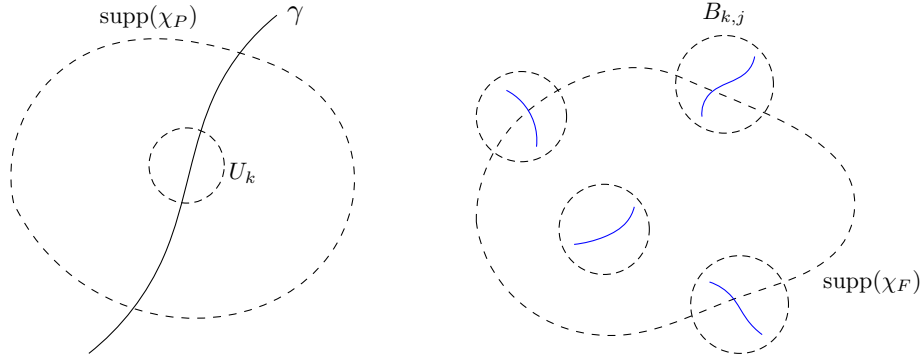

Let $(U_k)_{k=1}^N$ be the open cover of $\gamma|_{[a,b]}$ constructed in Lemma \ref{lem:covering}. Let $\mathcal I_1$, $\mathcal I_2$ be disjoint sets of indices such that $\mathcal I_1\cup \mathcal I_2 = \{1\le k\le N\}$ and the elements of $\mathcal I_1$, $\mathcal I_2$ satisfy the conditions \textit{a)}, \textit{b)} in Lemma \ref{lem:covering}, respectively. For $k\in \mathcal I_2$, let $(B_{k,j})_{j=1}^{N_k}$ be a collection of open balls in $K_F$ constructed in Lemma \ref{lem:covering}. We define the collections of sets $\mathcal U_1, \mathcal U_2, \mathcal U_3$ by
\[
\mathcal U_1 := \big(U_k\times N_\delta(K_F)\big)_{k\in \mathcal I_1},\  \mathcal U_2:= \big(U_k\times W_{k,>0}\big)_{k\in \mathcal I_2}, \  \mathcal U_3:= \big(U_k\times B_{k,j}\big)_{k\in \mathcal I_2, 1\le j\le N_k}
\]
for small $\delta>0$, where $W_{k,>0}$ is an open subset of $\R^2$ such that for $k\in \mathcal I_2$, the sets $W_{k,>0}, (B_{k,j})_{j=1}^{N_k}$ cover $K_F$ and $\mathcal Z_{\gamma(t)} \cap \supp(\chi_F) \cap W_{k,>0} = \emptyset$ for every $t\in [a_k,b_k]$.
Note that $\mathcal U_1 \cup \mathcal U_2 \cup \mathcal U_3$ is an open cover of $N_\delta\gamma\times K_F$ for small $\delta>0$. Let $\mathcal X$ be a partition of unity of $N_\delta\gamma\times K_F$ associated to the open sets in $\mathcal U_1 \cup\,\mathcal U_2 \cup\,\mathcal U_3$. Using the localization condition, we write
\[
f = \sum_{\chi\in \mathcal X} \chi^w(x,hD) f + O_{\mathcal S}(h^\infty).
\]
Let $\chi\in \mathcal X$. If $\supp(\chi)$ is contained in an open patch in $\mathcal U_1 \cup\,\mathcal U_2$, then by construction, there is no $(\gamma(t),\xi)\in \supp(\chi)$ such that $\gamma$ and $z_s(\gamma(t), \xi)$ meets tangentially at $\gamma(t)$. Using the result in Tacy \cite{Tac10}, we obtain the estimate $\|\chi^w(x,hD) f\|_{L^2(\gamma)}\lesssim 1$, which is better than what we desire. Hence we only need to estimate $\chi^w(x,hD) f$ for $\chi$ such that $\supp(\chi)\subset W$ for some $W\in \mathcal U_3$.

Consequently, from the discussion above, we conclude that it suffices to prove \eqref{e:mainl2} by further assuming that for every $t\in [a,b]$, $ \sigma_{\gamma,p}(t)\ge1$ and $\mathcal Z_{\gamma(t)}\cap\,\supp(\chi_F)$ contains at most one element.

\subsection{Order of contact}\label{ssec:contact}
By the above assumption and Lemma \ref{lem:covering}, there is a unique smooth function $\xi: [a-\delta,b+\delta]\to \R^2$ with small $\delta>0$ such that $p(\gamma(t), \xi(t)) = 0$ and $\partial_\xi p(\gamma(t), \xi(t))$ is parallel to $\dot\gamma(t)$.
By the Picard-Lindel\"of theorem, there exists a smooth function $L:[-T-\delta_1,T+\delta_2] \to [a-\delta, b+\delta]$ for some $T, \delta_1, \delta_2>0$ such that $L(-T) = a$, $L(T) = b$,
\[
\partial_t\big(\gamma(L(t))\big) = \partial_\xi p(\gamma(L(t)), \xi(L(t))) = \dot z_0(\gamma(L(t)), \xi(L(t))).
\]
For simplicity, we identify $\gamma\circ L, \xi\circ L$ with $\gamma, \xi$, respectively.
In this setting, the order of contact $\sigma_{\gamma, p}(t)$ at $\gamma(t)$ is reformulated as
\[
\sigma_{\gamma, p}(t)= \sup\{k\in \N:\ \gamma^{(k')}(t) = \partial_s^{k'}z_s(\gamma(t), \xi(t))\big|_{s=0}\ \text{for $k'\le k$}\},\quad t\in [a,b].
\]
In considering the contact order at the endpoints $t = a,b$, we use the extension $\wt \gamma$. Notice that the notation does not depend on the specific choice of $\wt\gamma$.
Furthermore, it is also independent of the choice of the chart. In fact, consider the transition map $\tau_{\alpha,\beta}: \varphi_\alpha(V_\alpha\cap V_\beta)\to \varphi_\beta(V_\alpha\cap V_\beta)$. Then using the canonical transformation $(x,\xi)\mapsto (\tau_{\alpha,\beta}(x), (\tau_{\alpha,\beta}'(x)^\intercal)^{-1}\xi)$ justifies that $\sigma_{\gamma, p}(t)$ remains unchanged when using another chart $V_\beta$ instead of $V_\alpha$ to describe it (see, for example, \cite[Section 2.3]{Sog14}).

Then, the sets $\mathcal G_k^{\gamma, p}$ for the higher order of contact can be defined as follows: for $k\ge 1$,
\begin{align*}
    \mathcal G_k^{\gamma, p} := \big\{t\in [a,b]: \sigma_{\gamma, p}(t)\ge k \big\},\quad \mathcal G_\infty^{\gamma, p} := \bigcap_{k=2}^\infty \mathcal G_k^{\gamma, p}.
\end{align*}
If $\sigma_{\gamma,p} < \infty$, then we can show the following.

\begin{lem}\label{lem:finite}
Assume that $\mathcal G_\infty^{\gamma, p} = \emptyset$. Then $\mathcal G_2^{\gamma, p}$ is finite.
\end{lem}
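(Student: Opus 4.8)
The plan is to argue by contradiction: if $\mathcal G_2^{\gamma,p}$ were infinite, then by compactness of $[a,b]$ it would have an accumulation point $t_*\in[a,b]$, and we would like to conclude that $t_*\in \mathcal G_\infty^{\gamma,p}$, contradicting the hypothesis $\mathcal G_\infty^{\gamma,p}=\emptyset$. The heart of the matter is therefore to show that accumulation of points of contact order $\ge 2$ forces \emph{infinite} contact order at the limit point. The natural way to set this up is to introduce, for each $k\ge 1$, the smooth function
\begin{align*}
    F_k(t) := \gamma^{(k)}(t) - \partial_s^k z_s(\gamma(t),\xi(t))\big|_{s=0},
\end{align*}
which is well defined and smooth on $[a-\delta,b+\delta]$ because $\xi$ is the unique smooth function produced in Section~\ref{ssec:contact} and the bicharacteristic flow depends smoothly on its initial data. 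With this notation, $t\in \mathcal G_k^{\gamma,p}$ if and only if $F_1(t)=\cdots=F_k(t)=0$; note $F_1\equiv 0$ by the construction of the parametrization (the curve is parametrized so that $\partial_t\gamma(t)=\partial_\xi p(\gamma(t),\xi(t))=\dot z_0$).

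The key step is an inductive claim: if $t_*$ is an accumulation point of $\mathcal G_2^{\gamma,p}$, then $F_k(t_*)=0$ for all $k\ge 1$. The base case $k=1$ is automatic. For the inductive step, the crucial observation is a \emph{derivative identity} relating $F_{k+1}$ to the derivative of $F_k$. Concretely, one differentiates the defining relation $\partial_s^k z_s(\gamma(t),\xi(t))|_{s=0}$ in $t$, using the chain rule and the flow equations $\dot z = \partial_\xi p$, $\dot\zeta = -\partial_x p$ together with the tangency condition $\partial_\xi p(\gamma(t),\xi(t))\parallel\dot\gamma(t)$; this shows that
\begin{align*}
    \frac{d}{dt}F_k(t) = F_{k+1}(t) + (\text{terms that are linear combinations of }F_1(t),\dots,F_k(t)\text{ with smooth coefficients}).
\end{align*}
Granting this identity, the induction proceeds as follows: suppose $F_1(t_*)=\cdots=F_k(t_*)=0$. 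Since $\mathcal G_2^{\gamma,p}$ accumulates at $t_*$ and $\mathcal G_{k}^{\gamma,p}\supset\mathcal G_{k+1}^{\gamma,p}\supset\cdots$, we would first need to know that in fact \emph{$\mathcal G_{k+1}^{\gamma,p}$} accumulates at $t_*$; this is where one uses that between two consecutive zeros of $F_2,\dots,F_k$ (which hold on the accumulating set $\mathcal G_2^{\gamma,p}$) Rolle's theorem produces zeros of the derivatives, and the derivative identity converts zeros of $\frac{d}{dt}F_j$ into zeros of $F_{j+1}$ at points where $F_1=\cdots=F_j=0$. Iterating Rolle's theorem $k$ times along a sequence in $\mathcal G_2^{\gamma,p}\to t_*$ yields a sequence of points converging to $t_*$ on which $F_1=\cdots=F_{k+1}=0$; passing to the limit and using continuity of $F_{k+1}$ gives $F_{k+1}(t_*)=0$. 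Hence $F_k(t_*)=0$ for all $k$, so $t_*\in\mathcal G_\infty^{\gamma,p}$, the desired contradiction.

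The main obstacle, and the step requiring the most care, is the derivative identity $\frac{d}{dt}F_k = F_{k+1} + O(F_1,\dots,F_k)$ — in particular making sure the correction terms genuinely vanish to the right order on $\mathcal G_k^{\gamma,p}$ so that the Rolle-theorem bookkeeping closes. A clean way to handle this is to work with the solution operator of the flow and Taylor-expand $s\mapsto z_s(\gamma(t),\xi(t))$ against $s\mapsto \gamma(t+s)$; the difference $\gamma(t+s)-z_s(\gamma(t),\xi(t))$ is a smooth function of $(t,s)$ vanishing at $s=0$, and its $s$-Taylor coefficients at $s=0$ are (up to factorials) exactly the $F_k(t)$. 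Differentiating this difference in $t$ and comparing Taylor coefficients, together with the fact that $s\mapsto z_s$ satisfies an ODE whose coefficients are smooth in the base point, gives the identity with the correction terms expressed explicitly; one should double-check the $t$-derivative of the \emph{initial condition} $(\gamma(t),\xi(t))$ feeds in correctly, which is exactly where the tangency relation $\partial_\xi p(\gamma(t),\xi(t))\parallel\dot\gamma(t)$ and the smoothness of $\xi$ from Lemma~\ref{lem:implicitg} are used. Everything else — compactness to extract $t_*$, repeated application of Rolle's theorem, and passing to the limit by continuity — is routine.
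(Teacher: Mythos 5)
Your high-level strategy is the same as the paper's: introduce $F_k(t)=\gamma^{(k)}(t)-\partial_s^k z_s(\gamma(t),\xi(t))|_{s=0}$, observe that an accumulation point $t_*$ of $\mathcal G_2^{\gamma,p}$ would force $F_k(t_*)=0$ for all $k$, and derive a contradiction. Your derivative identity is also essentially correct: differentiating $Z_k(x,\xi):=\partial_s^k z_s(x,\xi)|_{s=0}$ along $t\mapsto(\gamma(t),\xi(t))$ and using $\dot\gamma=\partial_\xi p(\gamma,\xi)$ gives $F_k'(t)=F_{k+1}(t)-\partial_\xi Z_k(\gamma(t),\xi(t))\cdot\mathbf H(t)$, where $\mathbf H(t)=\dot\xi(t)+\partial_x p(\gamma(t),\xi(t))$; since $F_2=\partial_\xi^2 p(\gamma,\xi)\mathbf H$ and (A2) together with $\mathbf H\perp\partial_\xi p(\gamma,\xi)$ let one recover $\mathbf H$ linearly from $F_2$, the correction is indeed a smooth linear expression in $F_2$.

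The genuine gap is in the Rolle bookkeeping. You assert that \emph{``Iterating Rolle's theorem $k$ times along a sequence in $\mathcal G_2^{\gamma,p}\to t_*$ yields a sequence of points converging to $t_*$ on which $F_1=\cdots=F_{k+1}=0$.''} This is false. Rolle applied between two consecutive zeros of $F_2$ produces a zero of $F_2'$, not of $F_3$: the identity $F_2'=F_3-(\text{smooth})\cdot\mathbf H$ only converts a zero of $F_2'$ into a zero of $F_3$ at points where $\mathbf H$ (equivalently $F_2$) already vanishes, and the Rolle point lies in the open interval between two points of $\mathcal G_2^{\gamma,p}$, where in general $F_2\neq 0$. (There is also the smaller issue that $F_k$ is $\R^2$-valued and Rolle does not apply directly to vector-valued functions.) In other words, you cannot conclude that $\mathcal G_{k+1}^{\gamma,p}$ accumulates at $t_*$; that statement need not be true.

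The fix — and essentially what the paper does, with $\mathbf H$ in place of your $F_2$ — is to bypass Rolle entirely: since $F_2$ is smooth and vanishes on a sequence converging to $t_*$, \emph{all} $t$-derivatives of $F_2$ vanish at $t_*$ (this is the standard Taylor/Rolle fact applied to a single fixed function, not iterated across the hierarchy $F_2,F_3,\dots$). Then express $F_{k+1}$ inductively via the derivative identity as a differential polynomial in $F_2,F_2',\dots,F_2^{(k-1)}$ with smooth coefficients; since every factor vanishes at $t_*$, so does $F_{k+1}(t_*)$, giving $t_*\in\mathcal G_\infty^{\gamma,p}$. With this replacement your argument closes and is a slight streamlining of the paper's proof, which instead runs the induction on both $\gamma^{(k)}$ and $\xi^{(k)}$ matching the flow derivatives and uses $\partial_t^n\mathbf H(t_\circ)=0$ as the inductive input. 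Note also that you should make explicit where (A1)/(A2) enter: they are needed to characterize $\mathcal G_2^{\gamma,p}$ as $\{\mathbf H=0\}$ (equivalently to invert $F_2=\partial_\xi^2 p\,\mathbf H$ on the relevant one-dimensional subspace), which is what makes the correction term in the derivative identity expressible through $F_2$.
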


\begin{proof}
Let $t\in \mathcal G_2^{\gamma, p}$. First, we differentiate $p(\gamma(t), \xi(t)) = 0$ and obtain
\begin{align}\label{i:perp1}
    \inp{\partial_\xi p(\gamma(t), \xi(t))}{\dot\xi(t) + \partial_x p(\gamma(t), \xi(t))} = 0.
\end{align}
We also differentiate both sides of $\partial_\xi p(\gamma(t),\xi(t)) = \dot\gamma(t)$ to get
\begin{align}\label{i:gt2}
    \gamma^{(2)}(t) = \partial_x^\intercal\partial_\xi p(\gamma(t),\xi(t))\dot{\gamma}(t) + \partial_\xi^\intercal \partial_\xi p(\gamma(t),\xi(t))\dot{\xi}(t).
\end{align}
On the other hand, a calculation gives
\begin{align}\label{i:gt3}
    \partial_s^2 z_s(\gamma(t), \xi(t))\big|_{s=0} = \partial_x^\intercal\partial_\xi p(\gamma(t),\xi(t))\dot{\gamma}(t) - \partial_\xi^\intercal \partial_\xi p(\gamma(t),\xi(t))\partial_x p(\gamma(t),\xi(t)).
\end{align}
Subtracting \eqref{i:gt3} from \eqref{i:gt2} yields
\[
\partial_\xi^\intercal \partial_\xi p(\gamma(t), \xi(t))\big(\dot{\xi}(t) + \partial_x p(\gamma(t), \xi(t))\big) = \gamma^{(2)}(t) - \partial_s^2 z_s(\gamma(t), \xi(t))\big|_{s=0} = 0.
\]
 Suppose that $\dot{\xi}(t) + \partial_x p(\gamma(t), \xi(t))\neq 0$. However, by the condition (A2), this is a contradiction to \eqref{i:perp1}. Therefore, we conclude that
\begin{align}\label{i:relA2}
    \mathcal G_2^{\gamma, p} = \big\{t\in [a,b] : \dot\gamma(t) = \partial_\xi p(\gamma(t),\xi(t)),\quad \dot\xi(t) = -\partial_x p(\gamma(t), \xi(t))\big\}.
\end{align}

We now prove the lemma. Suppose that $\#(\mathcal G_2^{\gamma, p}) = \infty$. Then there exists a sequence $(t_k)_{k=1}^\infty\subset \mathcal G_2^{\gamma, p}$ converging to $t_\circ\in [a,b]$ as $k\to \infty$. Let us set
\[
\mathbf H(t):=  \dot\xi(t)+ \partial_x p(\gamma(t), \xi(t)),\quad t\in [a,b].
\]
By \eqref{i:relA2}, $\mathbf H(t_k) = 0$ for $k\ge 1$. Thus by Taylor's theorem, $\partial_t^n\,\mathbf H(t_\circ) = 0$ for $n\ge 0$. Indeed, suppose that there exists $N\in\N$ such that $\partial_t^n\,\mathbf H(t_\circ) = 0$ for $n<N$, but $\partial_t^N\,\mathbf H(t_\circ)\neq 0$. Then by Taylor's theorem, we have $|\mathbf H(t)|\sim |t-t_\circ|^N$ for $t$ near $t_\circ$, which is a contradiction to $\mathbf H(t_k) = 0$.

We claim that $t_\circ\in \mathcal G_\infty^{\gamma, p}$. This would lead to the desired contradiction, completing the proof. To show the claim, we verify
\begin{align}\label{i:clmgamxi}
    \gamma^{(k)}(t_\circ) = \partial_s^{k}z_s(\gamma(t_\circ), \xi(t_\circ))\big|_{s=0},\quad \xi^{(k)}(t_\circ) = \partial_s^{k}\zeta_s(\gamma(t_\circ), \xi(t_\circ))\big|_{s=0}
\end{align}
for every $k\ge 2$. We prove this by induction on $k$. First, note that the statement for $k=2$ immediately follows from \eqref{i:gt2}, \eqref{i:gt3}, and $\mathbf H(t_\circ) = 0$. Next, assume that \eqref{i:clmgamxi} is valid for $k\le n-1$ with $n\ge 3$. We write
\begin{align*}
    \gamma^{(n)}(t_\circ) &= \partial_t^{n-1}\big(\partial_\xi p(\gamma(t), \xi(t))\big)\big|_{t= t_\circ},\\
    \partial_s^{n}z_s(\gamma(t_\circ), \xi(t_\circ))\big|_{s=0} &= \partial_s^{n-1}\big(\partial_\xi p(z_s(\gamma(t_\circ),\xi(t_\circ)), \zeta_s(\gamma(t_\circ),\xi(t_\circ)))\big)\big|_{s=0}.
\end{align*}
A straightforward calculation using the chain rule and the induction hypothesis shows that the two terms on the right-hand sides in the first and second lines should be equal. Hence the first identity in \eqref{i:clmgamxi} with $k=n$ follows. The second identity for $\xi, \zeta_s$ can be proven in the same manner. One additionally needs to use $\partial_t^n \mathbf H(t_\circ)= 0$. Consequently, \eqref{i:clmgamxi} holds for every $k\ge 2$.
\end{proof}

Consider the case $\sigma < \infty$. According to Lemma \ref{lem:finite}, $\mathcal G_2^{\gamma, p}$ is finite. Thus we can decompose $[a,b]$ into finite closed subintervals $\big([a_k,b_k]\big)_{k=1}^N$ of $[a,b]$ such that $\mathcal G_2^{\gamma, p}\cap [a_k,b_k]$ contains at most one element and  $|b_k-a_k|$ is sufficiently small. Using the partition of unity argument, we can reduce the matters to only considering $\gamma: [a,b]\to \R^2$ for which $\mathcal G_1^{\gamma, p} = [a,b]$ and $\mathcal G_2^{\gamma, p}$ is either empty or a singleton set. Using translation, we may assume that $0\in [a,b]$, and moreover, if $\mathcal G_2^{\gamma, p}\neq \emptyset$, then we can further assume that $\mathcal G_2^{\gamma, p} = \{0\}$.
%If $\mathcal G_2^{\gamma, p}\neq \emptyset$, using translation, we may assume that $a<0<b$ and $\mathcal G_2^{\gamma, p} = \{0\}$.
%Otherwise, after harmless translation, we can assume that $a<0<b$ in this case too. 
Also, we may assume that the length of the interval $b-a$ is sufficiently small, in that, $|a|,|b|\le\eps$ for a sufficiently small $\eps>0$.

Consequently, we deduce that \eqref{e:main} would follow once we prove the following.

\begin{prop}\label{prop:main}
    Let $h_0>0$ be a sufficiently small number and $K = K_P\times K_F\subset \R^2\times \R^2$ be a compact set. Let $p = p(x,\xi)$ be an admissible real-valued symbol on $T^*\R^2$ and $\gamma:[a,b]\to K_P$ be a smooth curve in $\R^2$ such that $\mathcal G_1^{\gamma, p} = [a,b]$ and $|b-a|\le\eps$ for sufficiently small $\eps>0$. Suppose that there exists a unique smooth function $\xi:[a,b]\to K_F$ satisfying
    \begin{align*}
        p(\gamma(t),\xi(t)) = 0,\quad \dot\gamma(t) = \partial_\xi p(\gamma(t), \xi(t)),\quad t\in [a,b].
    \end{align*}
    Assume further that either $\sigma = \infty$, or $\sigma < \infty$ and $\mathcal G_2^{\gamma, p}$ contains at most one element.
    Then for every $O(h)$ quasimode $f$ of $P(h)$ satisfying the localization property \eqref{i:localized}, there exists a positive constant $C = C(\gamma,p)$ such that
    \begin{align}\label{e:l2u}
    \|f\|_{L^2(\gamma)}\le C h^{-\rho(2,\sigma)},\quad h\in (0,h_0].
    \end{align}
\end{prop}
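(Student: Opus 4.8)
Since the case $q=4$ of \eqref{e:main} is Tacy's $L^4$ bound and the rest follows by H\"older, the goal is \eqref{e:l2u}, and the plan is to recast it as a resolvent estimate and attack that by a $TT^*$ argument together with a parametrix for the semiclassical propagator. Write $R_\gamma$ for the restriction operator $u\mapsto u|_\gamma$ and put $A:=R_\gamma\,\chi^w(x,hD)\,(h^{-1}P(h)+i)^{-1}$. If $f$ is an $O(h)$ quasimode satisfying \eqref{i:localized}, then $(h^{-1}P(h)+i)f=O_{L^2}(1)$ and $f=\chi^w(x,hD)f+O_{\mathcal S}(h^\infty)$, so $f|_\gamma=A\big[(h^{-1}P(h)+i)f\big]+O(h^\infty)$ and it suffices to prove $\|A\|_{L^2\to L^2(\gamma)}\lesssim h^{-\rho(2,\sigma)}$. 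Since $P(h)$ is self-adjoint, $A^*=(h^{-1}P(h)-i)^{-1}(\chi^w)^*R_\gamma^*$ and $\|A\|^2=\|AA^*\|$, where, using $(h^{-1}P(h)+i)^{-1}(h^{-1}P(h)-i)^{-1}=c\int_\R e^{-|t|}e^{ith^{-1}P(h)}\,dt$,
\[
 AA^*=c\int_\R e^{-|t|}\,R_\gamma\,\chi^w(x,hD)\,e^{ith^{-1}P(h)}\,(\chi^w)^*\,R_\gamma^*\,dt .
\]
Thus the target becomes $\|AA^*\|_{L^2(\gamma)\to L^2(\gamma)}\lesssim h^{-2\rho(2,\sigma)}$.

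\textbf{Reducing to an oscillatory integral operator.} First I would dispose of $|t|\ge\eps_0$ for a fixed small $\eps_0$: by $|b-a|\le\eps$ and the fact that $\mathcal G_2^{\gamma,p}$ is at most one point, a bicharacteristic tangent to $\gamma$ at time $0$ has left the tangency configuration by time $\eps_0$, so this part is governed by Tacy's transverse bound \eqref{e:tacylq} with $q=2$ and, after the $t$-integration, contributes an operator of norm $O(1)$ on $L^2(\gamma)$. For $|t|\le\eps_0$ I would insert the standard parametrix, whose kernel is $(2\pi h)^{-2}\int e^{\frac ih(\varphi(t,x,\eta)-y\cdot\eta)}a(t,x,\eta;h)\,d\eta$ with $\varphi$ solving $\partial_t\varphi+p(x,\partial_x\varphi)=0$, $\varphi(0,x,\eta)=x\cdot\eta$. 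Restricting $x,y$ to $\gamma$, reducing the $\eta$-integral by stationary phase (nondegenerate by (A2)), and treating the remaining $t$-integral by hand (it is nearly degenerate since $p\equiv0$ on the relevant set, and degenerates further only at the single point of $\mathcal G_2^{\gamma,p}$, where one van der Corput estimate in $t$ suffices), I expect to reduce the matter to an oscillatory integral operator
\[
 Sg(u)=\int_a^b e^{\frac ih\Phi(u,v)}\,\alpha(u,v;h)\,g(v)\,dv ,\qquad |\partial^\beta\alpha(u,v;h)|\lesssim h^{-1/2},
\]
for which the claim is $\|S\|_{L^2(\gamma)\to L^2(\gamma)}\lesssim h^{-2\rho(2,\sigma)}$, with $\Phi$ essentially the generating function of the time-$(v-u)$ flow; in particular $\partial_u\partial_v\Phi(u,v)$ should be tied to $z_{v-u}(\gamma(u),\xi(u))-\gamma(v)$, the failure of the bicharacteristic from $\gamma(u)$ to return to $\gamma(v)$.

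\textbf{Dyadic decomposition and the phase difference.} To bound $\|S\|$, decompose $S=\sum_l S_l$ with $|u-v|\sim2^{-l}$ on $S_l$. At small scales $2^{-l}\lesssim\theta(h)$, Schur's test and the amplitude bound give $\sum\|S_l\|\lesssim\theta(h)\,h^{-1/2}$; when $\sigma=\infty$ this already yields $\|S\|\lesssim h^{-1/2}=h^{-2\rho(2,\infty)}$, since there is no oscillatory gain to exploit ($\Phi$ being essentially affine when $\gamma$ is itself a bicharacteristic). At larger scales I would use $\|S_l\|^2=\|S_l^*S_l\|$, whose kernel is $\int e^{\frac ih(\Phi(u,w)-\Phi(u,v))}\overline{\alpha(u,v;h)}\,\alpha(u,w;h)\,du$, and apply van der Corput in $u$ via lower bounds on $u\mapsto\Phi(u,w)-\Phi(u,v)$. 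These are supplied by Proposition~\ref{prop:phibound}: the hypotheses $\mathcal G_1^{\gamma,p}=[a,b]$ and $\#\mathcal G_2^{\gamma,p}\le1$ force $z_{v-u}(\gamma(u),\xi(u))-\gamma(v)$, hence $\partial_u\partial_v\Phi$, to be approximable by a polynomial of degree $\sigma+1$ (cf.\ \eqref{i:diff2}), and integrating in the second variable gives a quantitative lower bound on $\partial_u(\Phi(u,w)-\Phi(u,v))$ when $\sigma$ is odd. When $\sigma$ is even that first derivative can vanish---geometrically, the bicharacteristic then crosses $\gamma$ at the tangency rather than staying on one side (cf.\ Figure~\ref{fig:casesigma})---and one instead uses a comparable lower bound on $\partial_u^2(\Phi(u,w)-\Phi(u,v))$. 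Feeding these into van der Corput and Schur's test controls $\|S_l^*S_l\|$, hence $\|S_l\|$, with a gain improving in $l$; summing and balancing against the small-scale term with a suitable $\theta(h)\sim h^{c(\sigma)}$ should give exactly $\|S\|\lesssim h^{-2\rho(2,\sigma)}$.

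\textbf{Main obstacle.} The hard part will be Proposition~\ref{prop:phibound}: identifying $\partial_u\partial_v\Phi$ with the geometric deviation $z_{v-u}(\gamma(u),\xi(u))-\gamma(v)$, deriving its polynomial approximation \eqref{i:diff2} from the jet conditions defining $\sigma$, and---treating the two parities of $\sigma$ separately---extracting lower bounds on the first and second $u$-derivatives of $\Phi(u,w)-\Phi(u,v)$ that remain effective down to the critical scale. This is the substance of Section~\ref{sec:phase}; granting it, the $TT^*$ and interpolation bookkeeping above is routine and produces the exponent $\rho(2,\sigma)$.
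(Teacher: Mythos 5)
Your overall architecture matches the paper's: reduce \eqref{e:l2u} to a resolvent bound, express the resolvent via the propagator, run a $TT^*$ argument, insert the WKB parametrix, reduce to an oscillatory integral operator whose phase $\Phi$ encodes the generating function, decompose dyadically, dispose of small scales by the amplitude bound, and at large scales apply van der Corput using the phase-derivative lower bounds of Proposition~\ref{prop:phibound} (first derivative for $\sigma$ odd, second derivative for $\sigma$ even). The balance point $\theta(h)\sim h^{1/(2\sigma+1)}$ you anticipate is indeed what the paper uses.

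There is, however, one genuine gap in your reduction: the treatment of $|t|\ge\eps_0$. You claim this part is ``governed by Tacy's transverse bound'' because the tangent bicharacteristic has left the tangency configuration by time $\eps_0$. This does not work. First, \eqref{e:tacylq} is an $L^q(\gamma)$ bound for quasimodes, not an operator norm bound on $R_\gamma\chi^w e^{ith^{-1}P}(\chi^w)^*R_\gamma^*$. Second, and more importantly, nothing prevents a bicharacteristic issuing from $\gamma$ from \emph{returning} to $\gamma$ at some later time: for $|t|\ge\eps_0$ the phase $\phi(t,\gamma(u),\eta)-\gamma(v)\cdot\eta$ may still have a stationary point in $\eta$ (wherever $z_{-t}$ maps some $(\gamma(u),\eta)$ back to a point on $\gamma$), and a crude bound on this part is only $O(\eps\,h^{-1})$, which exceeds the target $h^{-2\rho(2,\sigma)}$. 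The paper instead sidesteps long-time dynamics entirely: it reduces to a compactly supported $\varphi$ by writing $\varphi$ as a sum of pieces supported near $s=k\eps$, then uses $e^{ih^{-1}Ps}=e^{ih^{-1}P(s-k\eps)}\circ e^{ih^{-1}k\eps P}$, the unitarity of $e^{ih^{-1}k\eps P}$ on $L^2$, and the decay $|\varphi^{(m)}(s)|\lesssim(1+|s|)^{-N}$ to make the sum over $k$ geometric. You should replace your large-$|t|$ argument with this shift/unitarity device.

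A secondary technical point: you do stationary phase only in $\eta$ and propose to treat the $t$-integral ``by hand,'' while quoting the uniform amplitude bound $|\partial^\beta\alpha|\lesssim h^{-1/2}$. The paper instead introduces a dyadic decomposition in $s$ (scales $2^{-k}$) \emph{before} applying stationary phase jointly in the three variables $(s,\eta)$; the resulting amplitude $B_{k,l}^n$ satisfies $|\partial^\beta B_{k,l}^n|\lesssim 2^{k|\beta|}$ (see \eqref{b:dbs}), not a scale-uniform bound. Tracking these scale-dependent derivative bounds is needed for the van der Corput estimates to close, so your sketch as written would not quite produce the right exponents without importing this refinement.
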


\begin{rem}
    The extension of $\gamma$ to $\wt\gamma$ introduced at the end of Subsection \ref{ssec:basic} has only been employed to facilitate considering the order of contact $\sigma_{\gamma, p}(t)$ at the endpoints $t=a,b$ and does not play any role on the subsequent sections. We point out that the statement of Proposition \ref{prop:main} is irrelevant to the choice of the extension $\wt\gamma$.
\end{rem}

\section{Proof of Proposition \ref{prop:main}}\label{sec:redpropa}

In this section, we establish Proposition \ref{prop:main}. From now on, we occasionally write $P =P(h)$ for simplicity.

\subsection{Reduction to estimating an oscillatory integral} The first step toward proving Proposition \ref{prop:main} is to convert the bounds of quasimodes \eqref{e:l2u} into resolvent estimates. By the assumption that $f$ is a quasimode, there exists a $L^2$ normalized function $v$ such that $Pf = hv$. Since $p\in C_c^\infty(T^*\R^2)$ and real-valued, $P = p^w(x,hD)$ is a bounded self-adjoint operator. Hence the inverse $(P+ih)^{-1}$ exists, giving the expression $f = (h^{-1}P+i)^{-1}(if+v)$. Recalling the localization condition on $f$, we write
\[
f = \chi^w(x,hD)(h^{-1}P+i)^{-1}(if+v) + O_{\mathcal S}(h^\infty).
\]
Thus the proposition follows once the inequality
\begin{align}\label{e:l2v}
    \big\|\chi^w(x,hD)(h^{-1}P+i)^{-1}\big\|_{L^2\to L^2(\gamma)}\lesssim h^{-\rho(2,\sigma)}
\end{align}
is established.

To obtain \eqref{e:l2v}, we employ a standard approach, expressing the resolvent as an integral involving the propagator $e^{ih^{-1}Ps}$.
Applying the Fourier inversion formula, we can write
\begin{align*}
    (h^{-1}P+i)^{-1} &= \frac{h^{-1}P}{h^{-2}P^2+1} - \frac{i}{h^{-2}P^2+1} \\
    &=\int \mathcal F\Big(\frac{x}{x^2+1}\Big)(s)e^{ih^{-1}Ps} ds -i\int \mathcal F\Big(\frac{1}{x^2+1}\Big)(s)e^{ih^{-1}Ps} ds,
\end{align*}
where $\mathcal F$ denotes the Fourier transform. In the second line, we use the fact that $P$ is a compact operator, implying that there exists the spectral decomposition of $P$ (see \cite{RS80, Zwo12}). Since $\mathcal F((x^2+1)^{-1})(s)=\pi e^{-|s|}$ and $\mathcal F(x(x^2+1)^{-1})(s)=i\partial_s\mathcal F((x^2+1)^{-1})(s)$, the derivatives of these integrands are bounded by $C(1+|s|)^{-N}$ for $s\neq 0$, $N\in\N$.
Hence, it suffices to show that the estimate
\begin{align}\label{e:chiphi}
    \big\|\chi^w(x,hD)\widecheck\varphi(h^{-1}P)f\big\|_{L^2(\gamma)}\lesssim h^{-\rho(2,\sigma)}\|f\|_2
\end{align}
holds for every $\varphi\in C^\infty(\R\setminus\{0\})$ such that $|\varphi^{(m)}(s)|\lesssim (1+|s|)^{-N}$. Here, $\widecheck\varphi$ means the inverse Fourier transform of $\varphi$.

A simple decomposition argument further reduces the problem to showing \eqref{e:chiphi} only for $\varphi\in C^\infty(\R\setminus\{0\})$ supported in $(-\eps,\eps)$, where $\eps>0$ is a sufficiently small constant. To achieve this, let $\varsigma\in C_c^\infty(\R)$ such that $\varsigma\equiv 1$ on $[-1/4,1/4]$, $\supp(\varsigma)\subset (-1,1)$, and $\sum_{k\in\Z}\varsigma(s-k) = 1$ for $s\in\R$. Then we write
\begin{align}\nonumber
    \widecheck\varphi(h^{-1}P)f &= \sum_{k\in\Z}\int \varsigma\big(\eps^{-1}(s-k\eps)\big)\varphi(s) e^{\frac{i}{h}Ps}f\,ds \\\nonumber
    &= \sum_{k\in\Z} \int\varsigma(\eps^{-1}s)\varphi(s+k\eps) e^{\frac{i}{h} Ps}\big(e^{\frac{i}{h}k\eps P}f\big) ds.
\end{align}
Since $|\varphi^{(m)}(s+k\eps)|\lesssim (1+|k|\eps)^{-N}$ on $\supp(\varsigma(\eps^{-1}\cdot))$ and $\|e^{ih^{-1}k\eps P}\|_{2\to 2} = 1$, the estimate
\[
\bigg\|\int\varsigma(\eps^{-1}s)\varphi(s+k\eps) \chi^w(x,hD) e^{\frac{i}{h} Ps}\big(e^{\frac{i}{h}k\eps P}f\big) ds\bigg\|_{L^2(\gamma)}\lesssim (1+|k|\eps)^{-N} h^{-\rho(2,\sigma)}\|f\|_2
\]
follows once \eqref{e:chiphi} is established under the additional assumption $\supp(\varphi)\subset (-\eps,\eps)$.
Summing this over $k\in\Z$ verifies \eqref{e:chiphi} for general $\varphi\in C^\infty(\R\setminus\{0\})$ satisfying $|\varphi^{(m)}(s)|\lesssim (1+|s|)^{-N}$.

Next, we make use of a $TT^*$-argument. As a result, proving \eqref{e:chiphi} is reduced to showing that there exists a constant $C=C(\varphi, p, \gamma)>0$ such that
\begin{align}\label{e:chi2phi}
    \big\|\chi^w(x,hD)\widecheck\varphi(h^{-1}P)\chi^w(x,hD)\big\|_{L^2(\gamma)\to L^2(\gamma)}\le C h^{-2\rho(2,\sigma)}
\end{align}
holds for $\varphi\in C^\infty(\R\setminus\{0\})$ with $\supp(\varphi)\subset (-\eps,\eps)$. To verify this, we make use of a classical kernel representation formula for the propagator $e^{ih^{-1}Ps}$. Note that
\begin{align}\label{i:2chivphi}
    \chi^w(x,hD)\widecheck\varphi(h^{-1}P)\chi^w(x,hD) = \int \varphi(s) \chi^w(x,hD)e^{ih^{-1}Ps}\chi^w(x,hD) ds.
\end{align}

\begin{prop}\label{prop:asym1}
    There exists $s_0>0$ independent of $h$, such that for $0\le s\le s_0$,
    \begin{align*}
        \chi^w(x,hD)e^{ih^{-1}Ps}&\chi^w(x,hD) f(x)  \\
        &= \frac{1}{h^2}\int e^{\frac ih(\phi(s,x,\eta)-\inp{y}{\eta})}b(s,x,\eta;h)f(y)dyd\eta + E(s)f(x),
    \end{align*}
    where $\phi$ is a solution to
    \begin{align}\label{i:phisol}
        \partial_s\phi(s,x,\eta) + p(x,\partial_x\phi(s,x,\eta)) = 0,\quad \phi(0,x,\eta) = \inp{x}{\eta},
    \end{align}
    $b\in C_c^\infty(\R\times T^*\R^2)$ such that $\supp(b(s,\cdot,\cdot))\subset K$, $|b(s,\gamma(0),\xi(0))|>0$ for every $0\le s\le s_0$. Also, $E(s) = O(h^N) : \mathcal S'\to \mathcal S$ with a sufficiently large $N$. Furthermore, $\phi$ satisfies the relation
    \begin{align}\label{i:relphi}
        (x,\partial_x\phi(s,x,\eta)) = \kappa_s(\partial_\eta\phi(s,x,\eta), \eta).
    \end{align}
\end{prop}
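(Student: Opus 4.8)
The statement to prove is Proposition~\ref{prop:asym1}, a classical parametrix construction for the semiclassical Schr\"odinger propagator $e^{ih^{-1}Ps}$ valid for short times, together with the conjugation by the cutoff $\chi^w(x,hD)$ on both sides.

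\textbf{Plan of proof.} The plan is to construct the phase $\phi$ by solving the Hamilton--Jacobi equation \eqref{i:phisol} for short time by the method of characteristics, and then to build the amplitude $b$ by solving the associated transport equations, obtaining an asymptotic sum in powers of $h$; the error $E(s)$ comes from the standard Borel resummation and from cutting off away from $\supp(\chi)$. First I would recall that since $p\in C_c^\infty(T^*\R^2)$ is real-valued, $P=p^w(x,hD)$ is bounded self-adjoint, so $e^{ih^{-1}Ps}$ is a well-defined unitary group and it suffices to verify that the claimed integral $U(s)f$ solves the semiclassical evolution equation $hD_s U(s) = -P\,U(s) + (\text{error})$ with $U(0) = \chi^w(x,hD)^2 + O(h^\infty)$, after which Duhamel and $L^2$-boundedness of the error terms give the proposition.

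\textbf{Key steps.} (1) Solve \eqref{i:phisol}: for $(s,x,\eta)$ with $s$ small and $x$ in a neighborhood of $\supp(\chi_P)$, the solution $\phi(s,x,\eta)$ exists, is smooth, and satisfies $\phi(s,x,\eta) = \inp{x}{\eta} + O(s)$, so $\partial_x\phi = \eta + O(s)$ and in particular $\partial_\eta\partial_x\phi$ is invertible (close to the identity) — this is where the short-time restriction $s\le s_0$ enters, and it guarantees the phase is non-degenerate and the map $y = \partial_\eta\phi(s,x,\eta)$ is a diffeomorphism in $\eta$. (2) Verify \eqref{i:relphi}: differentiate the Hamilton--Jacobi equation and check that $s\mapsto (\partial_\eta\phi(s,x,\eta),\eta)$ and $s\mapsto (x,\partial_x\phi(s,x,\eta))$ satisfy, respectively, the spatial and momentum halves of Hamilton's equations with the correct initial data at $s=0$, so by uniqueness of the bicharacteristic flow $\kappa_s$ the identity \eqref{i:relphi} holds. (3) Insert the ansatz $U(s)f(x) = h^{-2}\int e^{\frac ih(\phi(s,x,\eta)-\inp{y}{\eta})} b(s,x,\eta;h) f(y)\,dy\,d\eta$ into $hD_s + P$; the leading term vanishes exactly because $\phi$ solves \eqref{i:phisol}, and matching powers of $h$ yields first-order linear transport equations for the terms $b_j(s,x,\eta)$ in the expansion $b\sim \sum_j h^j b_j$, solvable along the characteristics with $b_0(0,x,\eta) = \chi_P(x)^2\chi_F(\eta)$ (or whatever symbol arises from $\chi^w\chi^w$ up to $O(h)$, using the composition calculus). (4) Borel-sum the $b_j$ into a genuine symbol $b\in C_c^\infty$, supported in $K$ after multiplying by a cutoff, with the residual error being $O(h^N)$ in the strong sense $\mathcal S'\to\mathcal S$; the non-vanishing $|b(s,\gamma(0),\xi(0))|>0$ follows since $(\gamma(0),\xi(0))$ lies in the interior of $\supp(\chi)$ where $b_0>0$ and the transport equation preserves non-vanishing along the (short) flow. (5) Finally, account for the two cutoffs: the left $\chi^w(x,hD)$ is absorbed into the $x$-support of $b$, and the right $\chi^w(x,hD)$ produces the $\inp{y}{\eta}$ form after composing with the free $y$-integral, contributing $\chi_F(\eta)$-type factors to $b_0$; terms where the wavefront sets fail to match are $O(h^\infty)$ by non-stationary phase and go into $E(s)$.

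\textbf{Main obstacle.} The genuinely delicate point is controlling the error term $E(s)$ uniformly for $s\in[0,s_0]$ in the strong topology $\mathcal S'\to\mathcal S$ (not merely $L^2\to L^2$), which requires that after Borel summation the symbol $b$ and all the error symbols remain compactly supported in $(x,\eta)$ and Schwartz-class estimates propagate; this is standard (see \cite{Zwo12}) but must be done carefully because the propagator is applied to $\chi^w(x,hD)f$ whose phase-space localization must be tracked through the flow $\kappa_s$ for $s\le s_0$ — one needs $s_0$ small enough that $\kappa_s(\supp\chi)$ stays inside $K$. The Hamilton--Jacobi solvability in step (1) and the symbol-class bookkeeping in step (4) are the two places where the smallness of $s_0$ is used, and everything else is a routine application of the WKB/parametrix method.
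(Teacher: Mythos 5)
Your proposal is correct and takes essentially the same route as the paper: a WKB parametrix for $e^{ih^{-1}Ps}\chi^w(x,hD)$ (the paper cites \cite[Proposition~4.2]{KTZ07} for this, which covers your steps (1)--(4) including the Hamilton--Jacobi solution, the transport equations for the amplitude, and the relation \eqref{i:relphi}) followed by a stationary-phase absorption of the remaining left cutoff. The only difference in emphasis is that the paper carries out this absorption explicitly, writing $\chi^w(x,hD)$ as an oscillatory integral in auxiliary variables $(y,\zeta)$ with Weyl symbol $\chi(\tfrac{x+y}{2},\zeta)$ and applying stationary phase at the nondegenerate critical point $(y_0,\zeta_0)=(x,\partial_x\phi(s,x,\eta))$, which produces the amplitude factor $\chi(x,\partial_x\phi(s,x,\eta))+O(h)$ — constraining the $(x,\eta)$-support of $b$ jointly rather than ``the $x$-support'' alone, since the cutoff enters through $\partial_x\phi$.
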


\begin{proof}
    By a standard argument solving the eikonal equation (see \cite[Proposition 4.2]{KTZ07}), it follows that
    \begin{align*}
        e^{ih^{-1}Ps}&\chi^w(x,hD) f(x) = h^{-2}\int e^{\frac ih(\phi(s,x,\eta)-\inp{y}{\eta})}c(s,x,\eta;h)f(y)dyd\eta + \wt E(s)f(x),
    \end{align*}
    where $\phi$ is a solution of \eqref{i:phisol}, $c\in C_c^\infty(\R\times T^*\R^2)$ such that $\supp(c(s,\cdot,\cdot))\subset K$ and $|c(s,\gamma(0),\xi(0))|>0$ for $0\le s\le s_0$ with some $s_0>0$, and $\wt E(s)f$ is the error term satisfying $\|\wt E(s)f\|_2 = O(h^\infty)$.
    Using this, we express %$\chi^w(x,hD)e^{ih^{-1}Ps}\chi^w(x,hD) f$ as
    \begin{align}\nonumber
        \chi^w(x,hD)&e^{ih^{-1}Ps}\chi^w(x,hD) f(x) \\\label{i:2chiprop}
        &= \frac{1}{h^4}\int e^{\frac{i}{h}(\inp{x-y}{\zeta}+\phi(s,y,\eta)-\inp{w}{\eta})} c(s,y,\eta;h) \chi(\frac{x+y}{2},\zeta)f(w) dyd\zeta d\eta dw %\\\nonumber &\quad+ O_{L^2}(h^\infty).
    \end{align}
    modulo an error in $O_{L^2}(h^\infty)$.
    By a calculation, the first and second-order derivatives of the phase function $\inp{x-y}{\zeta}+\phi(s,y,\eta)-\inp{w}{\eta}$ in the variable $(y,\zeta)$ are expressed as follows.
    \begin{align*}
    \partial_{(y,\zeta)}\big(\inp{x-y}{\zeta}+\phi(s,y,\eta)-\inp{w}{\eta}\big) &= \big(\partial_y\phi(s,y,\eta) - \zeta, x-y\big), \\
        \partial_{(y,\zeta)}^2\big(\inp{x-y}{\zeta}+\phi(s,y,\eta)-\inp{w}{\eta}\big) &= \begin{pmatrix}
            \partial_y^2\phi(s,y,\eta) & -I \\
            -I & 0
        \end{pmatrix}.
    \end{align*}
    Note that the matrix $\partial_{(y,\zeta)}^2\big(\inp{x-y}{\zeta}+\phi(s,y,\eta)-\inp{w}{\eta}\big)$ is invertible, thus the phase has a nondegenerate stationary point at $(y_0, \zeta_0) = (x, \partial_x\phi(s,x,\eta))$. Applying the stationary phase method (see \cite [p.344]{Ste93}) yields
    \begin{align*}
        \frac{1}{h^2} \int &e^{\frac{i}{h}\big(\inp{x-y}{\zeta}+\phi(s,y,\eta)-\inp{w}{\eta}\big)} c(s,y,\eta;h) \chi(\frac{x+y}{2},\zeta) dyd\zeta \\
        & = \sum_{n=0}^N h^n e^{\frac{i}{h}(\phi(s,x,\eta)-\inp{w}{\eta})}c_n(s,x,\eta;h) + E(s,x,\eta;h)
    \end{align*}
    for $N\in\N$, where $E = O(h^{N+1})$ is the error term, $c_n\in C_c^\infty(\R\times T^*\R^2)$ such that the support of $(x,\eta)\mapsto c_n(s,x,\eta;h)$ is contained in $K$ for $s\in\R$, and $|c_0(s,\gamma(0),\xi(0))|>0$. Choosing a sufficiently large $N$, we set
    \[
    b(s,x,\eta;h) := \sum_{n=0}^N h^n c_n(s,x,\eta;h).
    \]
    By \eqref{i:2chiprop}, we have
    \begin{align*}
        \chi^w(x,hD)&e^{ih^{-1}Ps}\chi^w(x,hD) f(x)\\
        &= \frac{1}{h^2}\int e^{\frac{i}{h}(\phi(s,x,\eta)-\inp{w}{\eta})} b(s,x,\eta;h) f(w) dwd\eta + E(s)f(x)
    \end{align*}
    with $E(s) : L^2\to L^2$ such that $E(s)f \in O_{L^2}(h^{N+1})$. This completes the proof of Proposition \ref{prop:asym1}. 
\end{proof}

Recall that $\gamma$ is parametrized by $\gamma: [a,b]=:I\to \R^2$. Let
\[
T f(u) := \frac{1}{h^2} \int \varphi(s) e^{\frac{i}{h}(\phi(s,\gamma(u),\eta)-\inp{\gamma(v)}{\eta})} b(s,\gamma(u),\eta;h)f(v) d\eta ds dv,\quad u,v\in I.
\]
By Proposition \ref{prop:asym1}, \eqref{e:chi2phi} is a direct consequence of the estimate 
\begin{align}\label{est:TI}
    \|T\|_{L^2(I)\to L^2(I)}\lesssim h^{-2\rho(2,\sigma)}.
\end{align}

\subsection{Dyadic decomposition in time}
To estimate the $L^2(I)$ operator norm of $T$, we consider a dyadic decomposition of the kernel in $s$ away from zero. Let $\psi$ be a cutoff function on $\R$ such that $\psi\equiv 1$ on $[-5/4,-3/4]\cup[3/4,5/4]$, $\supp(\psi)\subset (-2,-1/2)\cup(1/2,2)$, and $\sum_{k\in\Z}\psi(2^k s) = 1$ for $s\in\R\setminus\{0\}$.
For $k\ge 0$, we define the operator $T_k$ by
\[
T_k f(u) := \frac{1}{h^2} \int \varphi(s)\psi(2^k s) e^{\frac{i}{h}(\phi(s,\gamma(u),\eta)-\inp{\gamma(v)}{\eta})} b(s,\gamma(u),\eta;h) f(v) d\eta ds dv.
\]
Clearly, $Tf$ is decomposed as $Tf = \sum_{k\ge 0}T_k f$. We proceed to establish the following estimates for the dyadic pieces. %$T_k$ satisfies the following proposition.

\begin{prop}\label{prop:esttk}
    Let $\sigma\ge 1$ and $k\ge 0$. Suppose that $p$ is an admissible real-valued symbol. Then we have the following.
    \begin{itemize}[topsep = -3pt]
        \item [i)] Assume that $\sigma < \infty$. Then we have
        \begin{itemize}[topsep=5pt]
            \item [a)] If $2^{-k} \lesssim h^{\frac{1}{2\sigma+1}}$, then
            \begin{align*}
        \big\|T_k f\big\|_{L^2(I)} \lesssim \begin{cases}
            2^{-k}h^{-1}\|f\|_2, & \text{ if}\ \  2^{-k}\lesssim h, \\
            2^{-\frac k2} h^{-\frac12}\|f\|_2, & \text{ if}\ \  h\lesssim 2^{-k}\lesssim h^{\frac{1}{2\sigma+1}}.
        \end{cases}
    \end{align*}
            \item [b)] If $2^{-k}\gtrsim h^{\frac{1}{2\sigma+1}}$, then
            \begin{align*}
        \big\|T_k f\big\|_{L^2(I)} \lesssim \begin{cases}
            2^{\sigma k}\|f\|_2, & \text{ if $\sigma$ is odd}, \\
            h^{-\frac14} 2^{\frac{2\sigma-1}{4}k}\|f\|_2, & \text{ if $\sigma$ is even}.
        \end{cases}
    \end{align*}
        \end{itemize}
    \item [ii)] Assume that $\sigma = \infty$. Then we have
    \begin{align*}
        \big\|T_k f\big\|_{L^2(I)} \lesssim \begin{cases}
            2^{-k}h^{-1}\|f\|_2, & \text{ if}\ \  2^{-k}\lesssim h, \\
            2^{-\frac k2} h^{-\frac12}\|f\|_2, & \text{ if}\ \  h\lesssim 2^{-k}\lesssim 1.
        \end{cases}
    \end{align*}
    \end{itemize}
\end{prop}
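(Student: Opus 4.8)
The plan is to view each $T_k$ as an oscillatory integral operator on $L^2(I)$ obtained by resolving the $\eta$- and $s$-integrations by stationary and nonstationary phase, and then to estimate it by Schur's test in the low- and mid-frequency ranges and by a $TT^\ast$ argument combined with the phase bounds of Section~\ref{sec:phase} in the high-frequency range $2^{-k}\gtrsim h^{1/(2\sigma+1)}$. The entire case division is dictated by comparing $2^{-k}$ with $h$ and with $h^{1/(2\sigma+1)}$, the latter being absent when $\sigma=\infty$; parts~i)a) and~ii) are then proved by the same argument, the only difference being the range of $2^{-k}$ and the finiteness of the crossover scale. One checks afterwards that the resulting bounds sum over $k$ to $h^{-2\rho(2,\sigma)}$, which is why exactly these exponents appear.

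First I would expand $\int e^{\frac{i}{h}(\phi(s,\gamma(u),\eta)-\langle\gamma(v),\eta\rangle)}b(s,\gamma(u),\eta;h)\,d\eta$. By the relation \eqref{i:relphi}, a critical point in $\eta$ occurs exactly at the frequency for which the null bicharacteristic issued from $\gamma(v)$ passes through $\gamma(u)$ at time $s$; since $|s|\sim 2^{-k}$ is small and $\dot\gamma=\partial_\xi p$ along the flow, this confines the effective support of the kernel of $T_k$ to $|u-v|\sim 2^{-k}$, the complementary contribution being $O(h^\infty)$ by nonstationary phase. The $\eta$-Hessian of the phase has size $\sim 2^{-k}$, so the stationary phase method is available only when $2^{-k}\gg h$; in the regime $2^{-k}\lesssim h$ I would instead integrate by parts in $\eta$ directly, which restricts the kernel to $|\gamma(u)-\gamma(v)|\lesssim h$, hence $|u-v|\lesssim h$, with size $O(2^{-k}h^{-2})$ there, and Schur's test yields $\|T_kf\|_{L^2(I)}\lesssim 2^{-k}h^{-1}\|f\|_2$.

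For $2^{-k}\gg h$, carrying out stationary phase in $(\eta,s)\in\R^3$ --- the joint Hessian being nondegenerate, with determinant comparable to $2^{-k}$ since one factor of it degenerates linearly as $s\to 0$ --- produces a kernel of the form $K_k(u,v)=h^{-1/2}2^{k/2}e^{\frac ih\Phi(u,v)}a_k(u,v;h)$ supported in $|u-v|\sim 2^{-k}$, where $\Phi(u,v)$ is the action along the connecting bicharacteristic. Applying Schur's test to $|K_k|$ directly gives $2^{-k/2}h^{-1/2}$, which is the claimed bound in the middle range $h\lesssim 2^{-k}\lesssim h^{1/(2\sigma+1)}$ and, with the same argument, throughout $h\lesssim 2^{-k}\lesssim 1$ when $\sigma=\infty$; in the latter case the order of contact is infinite, no quantitative oscillation can be extracted from $\Phi$, and the crude Schur estimate together with the $2^{-k}\lesssim h$ bound of the previous paragraph already gives part~ii).

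In the remaining range $2^{-k}\gtrsim h^{1/(2\sigma+1)}$ the crude Schur bound is too large and one must exploit the oscillation of $\Phi$; this is the crux of the proof. I would pass to $T_k^\ast T_k$, whose kernel at $(v,w)$ equals $\int\overline{K_k(u,v)}K_k(u,w)\,du$ with phase $h^{-1}\big(\Phi(u,w)-\Phi(u,v)\big)$, and invoke the phase estimates of Section~\ref{sec:phase}: the relation of $\partial_u\partial_v\Phi(u,v)$ to the displacement $z_{v-u}(\gamma(u),\xi(u))-\gamma(v)$ between $\gamma$ and its osculating bicharacteristic, and the polynomial approximation \eqref{i:diff2} (a polynomial of degree $\sigma+1$ vanishing to order $2$ at $v=u$), furnish the required lower bound on $\Phi(u,w)-\Phi(u,v)$ near the at most one point of $\mathcal G_2^{\gamma,p}$. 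The delicate dependence on the parity of $\sigma$ is the main obstacle: when $\sigma$ is odd the approximating polynomial is sign-definite, so the first $u$-derivative of $\Phi(u,w)-\Phi(u,v)$ is bounded below and a first-order van der Corput estimate followed by Schur's test on the $T_k^\ast T_k$ kernel gives $\|T_kf\|_{L^2(I)}\lesssim 2^{\sigma k}\|f\|_2$; when $\sigma$ is even the polynomial changes sign and the first derivative may vanish, so one must instead control the second $u$-derivative and use a second-order van der Corput estimate, which costs an additional factor $h^{-1/4}$ and yields $\|T_kf\|_{L^2(I)}\lesssim h^{-1/4}2^{(2\sigma-1)k/4}\|f\|_2$. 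Granting the phase estimates of Section~\ref{sec:phase} (Proposition~\ref{prop:phibound} and \eqref{i:diff2}), the remainder of the proof of Proposition~\ref{prop:esttk} is a careful---if lengthy---bookkeeping of stationary phase, Schur's test, $TT^\ast$, and one-dimensional van der Corput estimates, with every implicit constant kept independent of $k$ and $h$.
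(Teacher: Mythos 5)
Your outline is essentially the paper's argument: dyadic decomposition of the kernel of $T_k$ in the $(u,v)$-separation, nonstationary phase for $2^{-k}\lesssim h$ (Lemma \ref{lem:2jsmall}), the stationary-phase reduction to a kernel $h^{-1/2}2^{k/2}e^{\frac{i}{h}\Phi(u,v)}$ on $|u-v|\sim 2^{-k}$ plus a crude Schur bound in the middle range and for $\sigma=\infty$, and a $TT^\ast$/van der Corput argument invoking Proposition \ref{prop:phibound} and \eqref{i:diff2} with the parity-of-$\sigma$ dichotomy for $2^{-k}\gtrsim h^{1/(2\sigma+1)}$. The one step you gloss over, and which is not merely bookkeeping, is the further decomposition of the kernel in \emph{position} along $\gamma$ into blocks $\mathcal A_{l,n}$ of side $\sim 2^{-k}$ (the $\varsigma_{l,n},\varsigma'_{l,n}$ cutoffs in the paper) together with the rescaling $u\mapsto 2^{-k}u+u_0$, $v\mapsto 2^{-k}v+v_0$. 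Proposition \ref{prop:phibound} is only stated on such a block and its bounds depend on $|n|$; the dichotomy in part (ii) is between $|n|\gg 1$ (first-order van der Corput suffices for either parity, with the favorable extra factor $|n|^{-(\sigma-1)}$) and $|n|\lesssim 1$ (where the parity of $\sigma$ actually matters). Without this localization and rescaling one cannot apply Proposition \ref{prop:phibound} with uniform constants nor sum the resulting operator-norm bounds over $n$ by almost-orthogonality (Lemma \ref{lem:sumn}), so although your plan is correct in spirit, the $n$-decomposition should be made explicit before the van der Corput step can be executed.
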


Via the triangle inequality, it can be easily checked that Proposition \ref{prop:esttk} implies the required estimate \eqref{est:TI}.

As to be seen later, the kernel $T_k(u,v)$ rapidly decays when the distance $|u-v|$ is not comparable to $2^{-k}$. This means that the contributions of $T_k f$ are significant only when $|u-v|\sim 2^{-k}$. It suggests making a decomposition of the kernel with respect to $|u-v|$ to estimate $T_k f$ efficiently.
For $l\ge 0$, let $T_{k,l}$ be an operator from $L^2(I)$ to $L^2(I)$ whose kernel is given by
\[
T_{k,l}(u,v) = T_k(u,v) \psi(2^l|u-v|).
\]
Then it is clear that
\[
T_k f(u) = \sum_{l\ge 0} T_{k,l} f(u).
\]
To estimate each component, we further decompose the $u,v$-spaces into $O(2^l)$ intervals of length $\sim 2^{-l}$. For $l\ge 0$, $n\in\Z$, we define
\begin{align*}
    \varsigma_{l,n}(u) := \varsigma(2^{l+6}u - n),\quad \varsigma'_{l,n}(v) := \varsigma(2^{l-4}v - 2^{-10}n) - \varsigma(2^{l+2}v - 2^{-4}n).
\end{align*}
It is easy to see that the relations
\begin{align*}
    &\sum_{-2^l\le n\le 2^l} \varsigma_{l,n}(u) \equiv 1\ \text{on $I$}, \\
    &\dist(\supp(\varsigma_{l,n}), \supp(\varsigma'_{l,n})) \sim 2^{-l}, \\
    &\varsigma'_{l,n}(v) = 1\ \text{if } u\in \supp(\varsigma_{l,n}),\ \psi(2^l|u-v|)\neq 0
\end{align*}
are satisfied. This implies
\[
\sum_{-2^l\le n\le 2^l} \varsigma_{l,n}(u)\varsigma'_{l,n}(v) = 1
\]
on the support of $(u,v)\mapsto \psi(2^l|u-v|)$, allowing us to decompose
\begin{align*}
    T_{k,l}f(u) = \sum_{-2^l\le n\le 2^l} T_{k,l}^n f(u) := \sum_{-2^l\le n\le 2^l} \int T_{k,l}(u,v)\varsigma_{l,n}(u)\varsigma'_{l,n}(v) f(v) dv.
\end{align*}

To sum the estimates for $T_{k,l}^n f$ in $n$, we will make use of the elementary result below.

\begin{lem}\label{lem:sumn}
    Suppose that for $-2^l\le n\le 2^l$, there exists a constant $ C(h, \sigma,k,l,n)>0$ such that the estimate 
    \begin{align}\label{e:tkln}
        \|T_{k,l}^n f\|_{L^2(I)} \le C(h,\sigma,k,l,n) \|f\|_{L^2(I)}
    \end{align}
    holds. Then we have
    \begin{align*}
        \Big\|\sum_{-2^l\le n\le 2^l} T_{k,l}^n f\,\Big\|_{L^2(I)} \lesssim \max_{n : -2^l\le n\le 2^l} C(h, \sigma,k,l,n)\,\|f\|_{L^2(I)}.
    \end{align*}
\end{lem}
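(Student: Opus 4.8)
The plan is to exploit the near-orthogonality of the pieces $T_{k,l}^n$ coming from the finite-overlap property of the supports in both the $u$ and $v$ variables, together with the Cotlar--Stein lemma or, more elementarily, a direct $L^2$ estimate using the almost-disjointness of the ranges and domains. First I would record the two crucial support facts: the cutoffs $\varsigma_{l,n}(u)$ are supported in intervals of length $\sim 2^{-l}$ that have \emph{bounded overlap} (in fact, for a fixed $l$, the $\varsigma_{l,n}$ form a partition of unity on $I$ with supports overlapping at most a fixed finite number $M_0$ of times), and likewise the $\varsigma'_{l,n}(v)$ are supported in sets of measure $\sim 2^{-l}$ with bounded overlap, uniformly in $l$ and $n$. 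Consequently, for any $f\in L^2(I)$, writing $f_n := \varsigma'_{l,n} f$, we have $\sum_n \|f_n\|_{L^2(I)}^2 \lesssim \|f\|_{L^2(I)}^2$ with an implicit constant depending only on the overlap bound for $(\varsigma'_{l,n})_n$.

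Next I would estimate the full sum directly. Since $T_{k,l}^n f = \varsigma_{l,n}\cdot\big(\text{operator applied to }\varsigma'_{l,n}f\big)$, the function $T_{k,l}^n f$ is supported in $\supp(\varsigma_{l,n})$. Using the bounded overlap of these supports,
\begin{align*}
    \Big\|\sum_{-2^l\le n\le 2^l} T_{k,l}^n f\Big\|_{L^2(I)}^2
    &\lesssim \sum_{-2^l\le n\le 2^l} \big\|T_{k,l}^n f\big\|_{L^2(I)}^2,
\end{align*}
where the loss is only the fixed overlap constant $M_0$ (one expands the square, notes that a product of two terms vanishes unless the corresponding $u$-supports intersect, and each support meets at most $M_0$ others). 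Now apply the hypothesis \eqref{e:tkln}: $\|T_{k,l}^n f\|_{L^2(I)} \le C(h,\sigma,k,l,n)\|f\|_{L^2(I)}$. But $T_{k,l}^n f$ depends on $f$ only through $\varsigma'_{l,n}f = f_n$ (because $\varsigma'_{l,n}\equiv 1$ on the relevant region, so inserting it changes nothing, but it does let us localize the input), so in fact $\|T_{k,l}^n f\|_{L^2(I)} \le C(h,\sigma,k,l,n)\|f_n\|_{L^2(I)}$. Bounding $C(h,\sigma,k,l,n)$ by its maximum over $n$ and invoking $\sum_n \|f_n\|_{L^2(I)}^2 \lesssim \|f\|_{L^2(I)}^2$ gives
\begin{align*}
    \Big\|\sum_{-2^l\le n\le 2^l} T_{k,l}^n f\Big\|_{L^2(I)}^2
    &\lesssim \Big(\max_n C(h,\sigma,k,l,n)\Big)^2 \sum_n \|f_n\|_{L^2(I)}^2
    \lesssim \Big(\max_n C(h,\sigma,k,l,n)\Big)^2 \|f\|_{L^2(I)}^2,
\end{align*}
which is the claimed bound after taking square roots.

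The only genuinely delicate point — and the step I would be most careful about — is the justification that one may replace $f$ by $f_n=\varsigma'_{l,n}f$ in the input of $T_{k,l}^n$ at the cost of nothing, i.e. that $T_{k,l}^n f = T_{k,l}^n f_n$; this is immediate from the definition $T_{k,l}^n f(u) = \int T_{k,l}(u,v)\varsigma_{l,n}(u)\varsigma'_{l,n}(v)f(v)\,dv$, since the factor $\varsigma'_{l,n}(v)$ already localizes the integration. Given that, everything reduces to the two bounded-overlap properties of the families $(\varsigma_{l,n})_n$ and $(\varsigma'_{l,n})_n$, which are clear from their explicit definitions (each is a rescaled-translated copy of the fixed bump $\varsigma$ at spacing comparable to its width). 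I would present the argument in roughly this order: (1) state and verify the bounded-overlap and $\ell^2$-square-summability facts; (2) use disjointness of $u$-supports to pass to $\ell^2$ of the pieces; (3) apply the hypothesis with the localized input and sum. No appeal to Cotlar--Stein is actually needed here because the outputs are \emph{supported} in almost-disjoint sets rather than merely almost-orthogonal, which makes the first inequality elementary.
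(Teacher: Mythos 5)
Your proposal is correct and follows essentially the same route as the paper: finite overlap of the $u$-supports $\supp(\varsigma_{l,n})$ gives the almost-orthogonality step, the hypothesis is applied with the localized input, and finite overlap of the $v$-supports $\supp(\varsigma'_{l,n})$ resums the pieces. One minor inaccuracy: your closing justification of $T_{k,l}^n f = T_{k,l}^n(\varsigma'_{l,n}f)$, namely that the factor $\varsigma'_{l,n}(v)$ already localizes the integration, is not by itself sufficient since $\varsigma'_{l,n}$ is a smooth cutoff and hence not idempotent; the correct reason (which you do state earlier in the proposal) is that $\varsigma'_{l,n}\equiv 1$ wherever $\varsigma_{l,n}(u)\psi(2^l|u-v|)\neq 0$, or alternatively one can replace $f$ by $\mathds 1_{\supp(\varsigma'_{l,n})}f$, for which support localization alone does suffice.
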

\begin{proof}
    We note that $\supp(\varsigma_{l,n})\cap \supp(\varsigma_{l,n'}) = \emptyset$ unless $|n-n'|\le 4$. This implies
    \begin{align*}
        \Big\|\sum_{-2^l\le n\le 2^l} T_{k,l}^n f\,\Big\|_{L^2(I)}^2 \lesssim \sum_{-2^l\le n\le 2^l} \big\|T_{k,l}^n f\big\|_{L^2(I)}^2.
    \end{align*}
    By \eqref{e:tkln} and the fact that the sets $\supp(\varsigma'_{l,n})$ has finite overlap, the sum on the right-hand side is bounded by a constant times
    \begin{align*}
        \Big(\max_{n : -2^l\le n\le 2^l} C(h,\sigma,k,l,n)\Big)^2&\,\sum_{ -2^l\le n\le 2^l}\|\varsigma'_{l,n}f\|_{L^2(I)}^2 \\ &\lesssim \Big(\max_{n : -2^l\le n\le 2^l} C(h,\sigma,k,l,n)\Big)^2 \|f\|_{L^2(I)}^2,
    \end{align*}
    which immediately gives the desired result.
\end{proof}

\subsection{Properties of the phase $\phi$}
In this subsection, we collect some properties of $\phi$ which will frequently be used later.

\begin{prop}
    Let $\phi$ be as in Proposition \ref{prop:asym1}. Then the following hold.
    \begin{align}\nonumber
    &\partial_s\partial_\eta \phi(s,x,\eta) = - \partial_x^\intercal\partial_\eta \phi(s,x,\eta) \partial_\xi p(x, \partial_x\phi(s,x,\eta)), \\[2pt]
    \label{i:phids}
    &\partial_x^\intercal\partial_\eta \phi(0,x,\eta) = I,\quad \partial_\eta^2\phi(0,x, \eta) = 0, \\[2pt]
    \label{i:1dphix}
    &\partial_x\phi(s,x,\eta)=\eta+O(s),\\[2pt]
    \label{i:1dphiy}
    &\partial_\eta\phi(s,x,\eta)  = x-s\,\partial_\xi p(x,\eta) + O(s^2).
    \end{align}
\end{prop}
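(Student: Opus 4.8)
The plan is to obtain all four assertions directly from the eikonal equation \eqref{i:phisol} together with the initial condition $\phi(0,x,\eta)=\inp{x}{\eta}$, using nothing beyond the chain rule and Taylor expansion in $s$. Throughout I will use that $\phi$ is smooth on $[0,s_0]\times K$ — this is part of the Hamilton--Jacobi construction already invoked in the proof of Proposition \ref{prop:asym1} (cf.\ \cite[Proposition 4.2]{KTZ07}) — so that every $O(s)$ and $O(s^2)$ below is understood to be uniform in $(x,\eta)$ over the relevant compact set.

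For the transport identity in the first line, I would differentiate $\partial_s\phi+p(x,\partial_x\phi)=0$ in $\eta$. The chain rule gives $\partial_s\partial_\eta\phi = -\partial_x^\intercal\partial_\eta\phi\,\partial_\xi p(x,\partial_x\phi)$, reading $\partial_s\partial_\eta\phi$ and $\partial_\xi p$ as column vectors, which is exactly the claimed relation. The two identities in \eqref{i:phids} are then immediate from $\phi(0,x,\eta)=\inp{x}{\eta}$: since $\partial_\eta\phi(0,x,\eta)=x$, we get $\partial_x^\intercal\partial_\eta\phi(0,x,\eta)=I$ and $\partial_\eta^2\phi(0,x,\eta)=0$.

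For \eqref{i:1dphix}, note $\partial_x\phi(0,x,\eta)=\partial_x\inp{x}{\eta}=\eta$; since $s\mapsto\partial_x\phi(s,x,\eta)$ is $C^1$ with $s$-derivative bounded uniformly on $[0,s_0]\times K$, Taylor's theorem in $s$ yields $\partial_x\phi(s,x,\eta)=\eta+O(s)$. For \eqref{i:1dphiy} I would Taylor expand to second order in $s$: one has $\partial_\eta\phi(0,x,\eta)=x$, and evaluating the transport identity at $s=0$ together with \eqref{i:phids} and $\partial_x\phi(0,x,\eta)=\eta$ gives $\partial_s\partial_\eta\phi(0,x,\eta)=-\partial_x^\intercal\partial_\eta\phi(0,x,\eta)\,\partial_\xi p(x,\partial_x\phi(0,x,\eta))=-\partial_\xi p(x,\eta)$, whence $\partial_\eta\phi(s,x,\eta)=x-s\,\partial_\xi p(x,\eta)+O(s^2)$, the remainder being controlled by the uniform bound on $\partial_s^2\partial_\eta\phi$ over $[0,s_0]\times K$. (Alternatively, \eqref{i:relphi} gives a geometric route: writing $y(s)=\partial_\eta\phi(s,x,\eta)$, that relation reads $x=z_s(y(s),\eta)$, and differentiating at $s=0$ with $z_0=\mathrm{id}$ and $\dot z_0=\partial_\xi p$ recovers $\dot y(0)=-\partial_\xi p(x,\eta)$.)

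Since the whole proposition is a routine consequence of differentiating and Taylor-expanding the eikonal equation, there is no genuine obstacle; the only points requiring care are the matrix/transpose bookkeeping in the chain rule for the first identity and the uniformity of the $O(s)$, $O(s^2)$ remainders, both of which reduce to the smoothness of $\phi$ on $[0,s_0]\times K$ already recorded in Proposition \ref{prop:asym1}.
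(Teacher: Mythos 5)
Your argument is correct and follows the same route as the paper: differentiate the eikonal equation \eqref{i:phisol} in $\eta$ for the transport identity, read \eqref{i:phids} off the initial condition $\phi(0,x,\eta)=\inp{x}{\eta}$, and obtain \eqref{i:1dphix}--\eqref{i:1dphiy} by Taylor expansion in $s$ using $\partial_x\phi(0,x,\eta)=\eta$, $\partial_\eta\phi(0,x,\eta)=x$, and $\partial_s\partial_\eta\phi(0,x,\eta)=-\partial_\xi p(x,\eta)$. Your extra remarks on transpose bookkeeping and uniformity of the remainders, as well as the alternative derivation of \eqref{i:1dphiy} via \eqref{i:relphi}, are harmless additions to an essentially identical proof.
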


\begin{proof}
    The first identity is a direct consequence of differentiating the first identity in \eqref{i:phisol} with respect to $\eta$. The second one \eqref{i:phids} follows from the second identity in \eqref{i:phisol}. 
    The third one is deduced from Taylor's theorem and the fact that $\partial_x\phi(0,x,\eta)=\eta$.
    On the other hand, using \eqref{i:phisol} we have $\partial_s\partial_\eta\phi(0,x,\eta)=-\partial_\xi p(x,\eta)$, $\partial_\eta \phi(0,x,\eta)=x$.
    From this and Taylor's theorem, the last formula \eqref{i:1dphiy} follows.
\end{proof}

\subsection{Estimates for nonstationary part}
The $L^2$ bounds for $T^n_{k,l} f$ are obtained by dividing into two cases in $k$. First, we consider the easier case, $2^{-k}\lesssim h$.
In this case, we can obtain favorable bounds for the pieces $T_{k,l}^n f$ using integration by parts.

\begin{lem}\label{lem:2jsmall}
    Let $k,l\ge 0$, $N\in\N$. Suppose that $2^{-k}\lesssim h$. Then there exists a constant $C = C(p,\gamma)>0$ such that
    \begin{align*}
            \|T_{k,l}^n f\|_{L^2(I)}\lesssim \begin{cases}
                h^{-2}2^{-k-l}\|f\|_2, & \text{if } 2^{-l}\le Ch, \\
                h^{-2}2^{-k-l}(h2^l)^N\|f\|_2, & \text{if } 2^{-l}\ge Ch
            \end{cases}
        \end{align*}
        holds for $-2^l\le n\le 2^l$.
\end{lem}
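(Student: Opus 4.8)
\textbf{Proof plan for Lemma \ref{lem:2jsmall}.}

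The plan is to treat $T_{k,l}^n$ as an oscillatory integral operator with kernel
\[
T_{k,l}^n(u,v) = \frac{1}{h^2}\int \varphi(s)\psi(2^k s)\psi(2^l|u-v|)\,\varsigma_{l,n}(u)\,\varsigma'_{l,n}(v)\, e^{\frac ih \Psi(s,u,v,\eta)} b(s,\gamma(u),\eta;h)\,d\eta\, ds,
\]
where $\Psi(s,u,v,\eta) := \phi(s,\gamma(u),\eta) - \inp{\gamma(v)}{\eta}$, and to extract decay by repeated integration by parts in $\eta$. First I would compute $\partial_\eta \Psi = \partial_\eta\phi(s,\gamma(u),\eta) - \gamma(v)$ and invoke \eqref{i:1dphiy} to write $\partial_\eta\phi(s,\gamma(u),\eta) = \gamma(u) - s\,\partial_\xi p(\gamma(u),\eta) + O(s^2)$. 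Hence $\partial_\eta\Psi = \gamma(u) - \gamma(v) + O(2^{-k})$, since $s = O(2^{-k})$ on the support of $\psi(2^k s)$. On the support of the kernel we have $|u-v|\sim 2^{-l}$ and $\gamma$ is a smooth arc-length (or at least immersed) curve, so $|\gamma(u)-\gamma(v)|\sim 2^{-l}$ as well. Therefore, \emph{provided $2^{-l}\ge C\, 2^{-k}$ with $C$ large}, we get $|\partial_\eta\Psi|\gtrsim 2^{-l}$; recalling $2^{-k}\lesssim h$, the condition $2^{-l}\ge Ch$ suffices to guarantee this lower bound. The symbol $b$ together with the cutoffs is bounded with all $\eta$-derivatives bounded uniformly, so the non-stationary phase principle (integration by parts $N$ times in $\eta$ using the vector field $L = \frac{\overline{\partial_\eta\Psi}}{i h^{-1}|\partial_\eta\Psi|^2}\cdot \partial_\eta$) yields
\[
|T_{k,l}^n(u,v)| \lesssim \frac{1}{h^2}\, 2^{-k}\,\Big(\frac{h}{2^{-l}}\Big)^N = h^{-2} 2^{-k} (h2^l)^N
\]
for $u\in\supp(\varsigma_{l,n})$, $v\in\supp(\varsigma'_{l,n})$, where the factor $2^{-k}$ comes from the $s$-integral over an interval of length $\sim 2^{-k}$ and each integration by parts in $\eta$ gains a factor $h/(2^{-l}) = h2^l$. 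When instead $2^{-l}\le Ch$ we simply use the trivial bound (no integration by parts), giving $|T_{k,l}^n(u,v)|\lesssim h^{-2}2^{-k}$ on the same support.

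Next I would pass from the pointwise kernel bound to the $L^2(I)\to L^2(I)$ operator norm via Schur's test. Both $u$ and $v$ range over intervals of length $\sim 2^{-l}$ (the supports of $\varsigma_{l,n}$ and $\varsigma'_{l,n}$), so
\[
\sup_u \int |T_{k,l}^n(u,v)|\,dv \lesssim 2^{-l}\cdot\sup_{u,v}|T_{k,l}^n(u,v)|,\qquad \sup_v\int |T_{k,l}^n(u,v)|\,du \lesssim 2^{-l}\cdot\sup_{u,v}|T_{k,l}^n(u,v)|,
\]
and Schur's lemma gives $\|T_{k,l}^n\|_{L^2\to L^2}\lesssim 2^{-l}\sup_{u,v}|T_{k,l}^n(u,v)|$. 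Combined with the kernel bounds above this produces exactly
\[
\|T_{k,l}^n f\|_{L^2(I)} \lesssim \begin{cases} h^{-2}2^{-k-l}\|f\|_2, & 2^{-l}\le Ch,\\[2pt] h^{-2}2^{-k-l}(h2^l)^N\|f\|_2, & 2^{-l}\ge Ch,\end{cases}
\]
which is the claim (the constant $C$ being the same large constant that appears in the phase lower bound).

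The main obstacle — though a minor one — is making the lower bound $|\partial_\eta\Psi|\gtrsim 2^{-l}$ completely rigorous: one must verify that the $O(2^{-k})$ and $O(2^{-k})$-type error terms coming from \eqref{i:1dphiy} (i.e. from $-s\,\partial_\xi p$ and the $O(s^2)$ remainder, all of size $\lesssim 2^{-k}$) are genuinely dominated by $|\gamma(u)-\gamma(v)|\sim 2^{-l}$; this is where the threshold constant $C$ enters and why the two regimes $2^{-l}\lessgtr Ch$ appear — one needs $2^{-l}\gg 2^{-k}$, and since $2^{-k}\lesssim h$ it is enough to take $2^{-l}\ge Ch$. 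One should also note that the lower bound on $|\gamma(u)-\gamma(v)|$ uses only that $\gamma$ is a $C^1$ immersion on the (small) parameter interval, which holds after the reductions of Section \ref{sec:redgloc}; since $\varphi$ is supported in $(-\eps,\eps)$, we have $s$ small, so the $O(s^2)$ bounds are uniform. Everything else — uniform bounds on $b$ and the cutoffs and their $\eta$-derivatives, and the harmless dependence of constants on $p,\gamma,N$ — is routine.
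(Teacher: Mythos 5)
Your proposal is correct and takes essentially the same route as the paper. Both arguments reduce to (a) a trivial $L^\infty$ kernel bound plus a size-of-support count when $2^{-l}\lesssim h$, and (b) non-stationary phase in the frequency variable when $2^{-l}\gg h$, exploiting that $\partial_\eta(\phi(s,\gamma(u),\eta)-\langle\gamma(v),\eta\rangle)=\gamma(u)-\gamma(v)+O(2^{-k})$ is bounded below by $\sim 2^{-l}$ once $2^{-l}$ dominates $2^{-k}\lesssim h$, followed by H\"older/Schur (these give the same $2^{-l}\sup|T^n_{k,l}(u,v)|$ bound here). The only cosmetic differences are that the paper first rescales $s\to 2^{-k}s$ and tracks the joint gradient $\partial_{(s,\eta)}$ — setting up a bound (their \eqref{b:lbphi}) that is reused later when $2^{-k}\gtrsim h$ — whereas you integrate by parts in $\eta$ alone, which is sufficient in the present regime; and the paper invokes H\"older where you invoke Schur. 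One should, as the paper does, note that all higher $(s,\eta)$-derivatives of the phase are uniformly bounded on the (compact) support of the amplitude, so the $L^*$-iteration indeed only produces powers of $h2^l$; you gesture at this but it is worth stating since each application of the operator $L^*$ also differentiates the factor $\overline{\partial_\eta\Psi}/|\partial_\eta\Psi|^2$.
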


\begin{proof}
    First, we assume $2^{-l}\lesssim h$. We note that the kernel of $T_{k,l}^n$ has the trivial bound
    \begin{align*}
        \big|T_{k,l}^n(u,v)\big|&\lesssim h^{-2}\int \Big|\varphi(s)\psi(2^k s) b(s,\gamma(u),\eta)\varsigma_{l,n}(u)\varsigma'_{l,n}(v)\Big|dsd\eta \\
        &\lesssim h^{-2}2^{-k}\varsigma_{l,n}(u)\varsigma'_{l,n}(v),
    \end{align*}
    which, combined with H\"older's inequality, gives
    \[
    \|T_{k,l}^n f\|_{L^2(I)}\lesssim h^{-2}2^{-k} 2^{-l}\|f\|_2.
    \]
    Now suppose that $2^{-l}\gg h$. Making the change of variable $s\to 2^{-k}s$, we write
    \begin{align*}
        T_{k,l}^n(u,v) = \frac{2^{-k}}{h^2}\int \varphi(2^{-k}s) \psi(s) &e^{\frac{i}{h}(\phi(2^{-k} s,\gamma(u),\eta) - \inp{\gamma(v)}{\eta})}\\
        &\qquad\qquad \times b(2^{-k}s,\gamma(u),\eta)\varsigma_{l,n}(u)\varsigma'_{l,n}(v) dsd\eta.
    \end{align*}
    Using \eqref{i:1dphiy}, we calculate
    \begin{align}\nonumber
        \partial_\eta\big(\phi(2^{-k}s,x,\eta) - \inp{y}{\eta}\big) & = \partial_\eta\phi(2^{-k}s,x,\eta) - y \\\label{i:1dphiy2}
        & = x-y-2^{-k}s\,\partial_\xi p(x,\eta) + O(2^{-2k}).
    \end{align}
    %Using \eqref{i:1dphiy}, we express the derivative term in \eqref{i:deriphiy} as
    %\begin{align}\label{i:1dphiy2}
    %    \partial_\eta\big(\phi(2^{-k}s,x,\eta) - \inp{y}{\eta}\big) = x-y-2^{-k}s\,\partial_\xi p(x,\eta) + O(2^{-2k}).
    %\end{align}
    On the other hand, using \eqref{i:phisol} and \eqref{i:1dphix} gives
    \begin{align}\label{i:1dphis}
        \partial_s\big(\phi(2^{-k}s,x,\eta) - \inp{y}{\eta}\big) = -2^{-k}p(x,\eta) + O(2^{-2k}),\quad x,y\in \mathbb R^2.
    \end{align}
    By the condition (A1),
    \begin{align}\label{e:lbppxi}
        |p(x,\eta)| + |\partial_\xi p(x,\eta)|\ge C,\quad (x,\eta)\in K.
    \end{align}
    The same condition also guarantees that $|\dot\gamma(u)| = |\partial_\xi p(\gamma(u),\xi(u))|\ge C$.
    Consequently, substituting $x = \gamma(u)$, $y = \gamma(v)$ into \eqref{i:1dphiy2}, \eqref{i:1dphis}, we obtain
\begin{align}\label{b:lbphi}
    \big|\partial_{(s,\eta)}\big(\phi(2^{-k} s,\gamma(u),\eta) - \inp{\gamma(v)}{\eta}\big)\big| \gtrsim \begin{cases}
        2^{-k}, & \text{ if } 2^{-l}\ll 2^{-k}, \\
        2^{-l}, & \text{ if } 2^{-l}\gg 2^{-k}.
    \end{cases}
\end{align}
    Also, it is immediate that $|\partial_{(s,\eta)}^\beta (\phi(2^{-k}s,x,\eta) - \inp{y}{\eta})|\le C_ \beta$ for $\beta\in \N_0^3$. Thus, repeated integration by parts yields
    \begin{align*}
        |T_{k,l}^n(u,v)|\lesssim h^{-2} 2^{-k} (h2^l)^N \varsigma_{l,n}(u)\varsigma'_{l,n}(v)
    \end{align*}
    provided $2^{-l}\gg 2^{-k}$. Using H\"older's inequality yields the second estimate of the lemma.
\end{proof}

Combining Lemma \ref{lem:sumn} and \ref{lem:2jsmall}, we can obtain the first estimate of Proposition \ref{prop:esttk}. By Lemma \ref{lem:sumn}, we have
\begin{align*}
    \|T_{k,l}f\|_{L^2(I)}\lesssim 
                h^{-2}2^{-k-l}\min(1,(h2^l)^N)\|f\|_2.
\end{align*}
Summing the estimate over $l$, the required estimate follows.

Now, we look for the case $2^{-k}\gtrsim h$.
%This case is more involved than the previous one. 
To address the case, we divide into two subcases: \textit{i) $2^{-l}\sim 2^{-k}$, ii) $2^{-l}\ll 2^{-k}$ or $2^{-l}\gg 2^{-k}$.} As mentioned earlier, the main contribution arises from the former case. We first consider the second case, which can be dealt with by using the integration by parts.

Since the higher order derivatives of the phase function in $\eta$ are uniformly bounded, from integration by parts and \eqref{b:lbphi}, we have
\begin{align*}
    |T_{k,l}^n(u,v)| \lesssim
        h^{-2}2^{-k}h^N \min(2^{kN},2^{lN})\,\varsigma_{l,n}(u)\varsigma'_{l,n}(v)
\end{align*}
provided that $2^{-l}\ll 2^{-k}$ or $2^{-l}\gg 2^{-k}$. Using H\"older's inequality, we get
\begin{align*}
            \|T_{k,l}^n f\|_{L^2(I)}\lesssim \begin{cases}
                h^{-2}2^{-k-l}(h 2^k)^N, & \text{if } 2^{-l}\ll 2^{-k}, \\
                h^{-2}2^{-k-l}(h2^l)^N, & \text{if } 2^{-l}\gg 2^{-k}.
            \end{cases}
        \end{align*}
Applying Lemma \ref{lem:sumn} shows that $T_{k,l} f$ satisfies the same estimate as above. Summing this over $l$, we obtain
\begin{align}
\begin{aligned}\label{e:easy}
    \bigg\|\sum_{l : 2^{-l}\ll 2^{-k} \text{ or } 2^{-l}\gg 2^{-k}} T_{k,l} f\bigg\|_{L^2(I)} &\lesssim h^{-2} 2^{-2k} (h2^k)^N \\
    &\lesssim 1
\end{aligned}
\end{align}
for $N\in\N$ because $2^k h\lesssim 1$. Note that the resulting bound is less than $2^{-k/2}h^{-1/2}$ and $2^{k\sigma}$, which are the bounds we need to obtain to prove Proposition \ref{prop:esttk}.

\subsection{Asymptotic expansion of kernel}
From now on, we assume that $2^{-k}\gtrsim h$, $2^{-l}\sim 2^{-k}$. An additional difficulty arises in handling this case compared to the previous ones because the derivatives of $\phi(s,\gamma(u),\eta) - \inp{\gamma(v)}{\eta}$ in $s,\eta$ may vanish at some $(s,\gamma(u),\eta)\in \supp(b)$, unlike the previous cases. Recall that there exists a smooth curve $\xi : I\to \R^2$ such that
\begin{align}\label{r:gamxiu}
    p(\gamma(u), \xi(u)) = 0, \quad \dot\gamma(u) = \partial_\xi p(\gamma(u), \xi(u))
\end{align}
for every $u\in I$. Also, let 
\[
\mathcal A_{l,n} := \supp(\varsigma_{l,n})\times \supp(\varsigma'_{l,n}).
\]

\begin{lem}\label{lem:seta}
    Let $(u,v)\in \mathcal A_{l,n}$. Then there exists a unique $(s(u,v), \eta(u,v))\in \R\times \R^2$ such that
    \begin{align}\label{i:seta0}
        &\partial_{(s,\eta)}\big(\phi(s,\gamma(u),\eta) - \inp{\gamma(v)}{\eta}\big)\big|_{(s,\eta) = (s(u,v), \eta(u,v))} = 0, \\\label{r:dseta}
        & s(u,v)\in B(u-v, C2^{-2k}),\quad \eta(u,v)\in B(\xi(u), C2^{-k})
    \end{align}
    with a constant $C>0$. Furthermore, the map $(u,v)\mapsto (s(u,v), \eta(u,v))$ is smooth on $\mathcal A_{l,n}$ and satisfies the bounds
    \begin{align}\label{b:dseta}
        \big|\partial_{(u,v)}^\beta s(u,v)\big|\le C_\beta,\quad \big|\partial_{(u,v)}^\beta \eta(u,v)\big|\le  2^kC_\beta
    \end{align}
    for every $\beta\in \N^2$ with a constant $C_\beta>0$.
\end{lem}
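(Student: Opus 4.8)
The goal is to locate the stationary point of the phase $\Psi(s,\eta):=\phi(s,\gamma(u),\eta)-\inp{\gamma(v)}{\eta}$ in the $(s,\eta)$ variables and show it depends smoothly on $(u,v)\in\mathcal A_{l,n}$ with the stated quantitative bounds. The natural tool is the implicit function theorem applied to the map $F(s,\eta;u,v):=\partial_{(s,\eta)}\Psi(s,\eta)$. First I would exhibit an approximate zero: by \eqref{i:1dphiy} we have $\partial_\eta\Psi=\gamma(u)-\gamma(v)-s\,\partial_\xi p(\gamma(u),\eta)+O(s^2)$, and by \eqref{i:phisol} together with \eqref{i:1dphix} we have $\partial_s\Psi=-p(\gamma(u),\eta)+O(s)$ (here $O(s)$ is uniform on the compact support of $b$). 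Since $p(\gamma(u),\xi(u))=0$ and $\dot\gamma(u)=\partial_\xi p(\gamma(u),\xi(u))$ by \eqref{r:gamxiu}, and since $\gamma(u)-\gamma(v)=(u-v)\dot\gamma(u)+O((u-v)^2)$ by Taylor, the point $(s,\eta)=(u-v,\xi(u))$ makes both components of $F$ vanish up to $O(2^{-2k})$ (recall $|u-v|\sim 2^{-k}$ on $\mathcal A_{l,n}$). This is the content that forces the localization in \eqref{r:dseta}.

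Next I would verify that the Jacobian $\partial_{(s,\eta)}F$ is invertible near this approximate zero, with inverse bounded independently of $h,k,l,n$ after the appropriate rescaling. Computing, $\partial_{(s,\eta)}F=\partial_{(s,\eta)}^2\Psi$; at $s=0$ the diagonal block $\partial_s^2\phi$ is $-\partial_x\phi\cdot$(lower order) while the crucial block $\partial_s\partial_\eta\Psi\big|_{s=0}=-\partial_\xi p(\gamma(u),\eta)$ and $\partial_\eta^2\Psi\big|_{s=0}=0$ (using \eqref{i:phids}). So the Hessian at $(0,\xi(u))$ has the block structure $\begin{pmatrix}* & -\partial_\xi p^\intercal\\ -\partial_\xi p & 0\end{pmatrix}$ which is invertible precisely because $\partial_\xi p(\gamma(u),\xi(u))=\dot\gamma(u)\ne 0$ by the admissibility condition (A1)/(A2). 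At $s$ of size $2^{-k}$ this picture persists up to $O(2^{-k})$ errors, so the Hessian remains uniformly invertible once $h_0$ (hence $2^{-k}\le 2^{-k_0}$) is small enough. Here one must track the anisotropy: $s$ lives at scale $2^{-k}$ but $\eta$ at scale $O(1)$ with the zero near $\xi(u)$; the off-diagonal structure means the natural rescaling $s=2^{-k}\tilde s$ produces a Hessian comparable to $\mathrm{diag}(2^{-k},1)$ times a uniformly invertible matrix, which is what yields the asymmetric derivative bounds $|\partial^\beta_{(u,v)}s|\le C_\beta$ versus $|\partial^\beta_{(u,v)}\eta|\le 2^k C_\beta$ in \eqref{b:dseta}.

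Then the quantitative implicit function theorem (or just Newton iteration started at $(u-v,\xi(u))$) gives a unique smooth solution $(s(u,v),\eta(u,v))$ in the ball described by \eqref{r:dseta}, and differentiating the identity $F(s(u,v),\eta(u,v);u,v)\equiv 0$ repeatedly in $(u,v)$, solving for the derivatives using the bounded inverse Hessian, and inducting on $|\beta|$ produces \eqref{b:dseta}; the factor $2^k$ on the $\eta$-derivatives comes from the off-diagonal $2^{-k}$ in the Hessian as just explained, together with the fact that $\partial_{(u,v)}F$ has entries of size $O(1)$ for the $\partial_\eta\Psi$ component and $O(1)$ for the $\partial_s\Psi$ component. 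Uniqueness globally on $\mathcal A_{l,n}$ (not just locally) follows because outside the ball \eqref{r:dseta} the lower bound \eqref{b:lbphi} from the previous subsection shows $|\partial_{(s,\eta)}\Psi|\gtrsim 2^{-k}$, so no other stationary point exists.

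\textbf{Main obstacle.} The delicate point is the bookkeeping of the two different scales — $s\sim 2^{-k}$ small versus $\eta$ at unit scale but pinned near $\xi(u)$ — so that the error terms $O(s)$, $O(s^2)$, $O(2^{-2k})$ coming from \eqref{i:1dphix}--\eqref{i:1dphiy} are genuinely negligible against the size of the relevant block of the Hessian, and so that the rescaled problem has constants uniform in $k,l,n$ (and hence in $h$). Getting the $2^k$ weight in \eqref{b:dseta} exactly right, rather than a weaker power, requires carefully exploiting the off-diagonal (null $\partial_\eta^2\phi$ at $s=0$) structure rather than treating the Hessian as a generic invertible matrix; this anisotropic nondegeneracy is really the geometric heart of the fold/stationary-phase analysis and everything downstream depends on it.
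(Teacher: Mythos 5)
Your overall strategy — exhibit the approximate stationary point $(u-v,\xi(u))$, rescale, and apply a quantitative implicit function theorem, with uniqueness on $\mathcal A_{l,n}$ coming from the lower bound \eqref{b:lbphi} away from the ball \eqref{r:dseta} — matches the paper's. But there is a genuine error in the central invertibility step.

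You assert that the Hessian $\partial_{(s,\eta)}^2\bigl(\phi(s,\gamma(u),\eta)-\inp{\gamma(v)}{\eta}\bigr)$ at $(0,\xi(u))$ has the block form
\[
\begin{pmatrix} \ast & -\partial_\xi p^\intercal \\ -\partial_\xi p & 0 \end{pmatrix}
\]
and that this is invertible because $\partial_\xi p\neq 0$. This is false. Since $\eta\in\R^2$, this is a $3\times 3$ matrix whose lower-right $2\times 2$ block vanishes (by \eqref{i:phids}); its last two columns both lie in the span of $e_1$, so it has rank at most $2$ and is singular. Consequently the follow-up claim — that at $s\sim 2^{-k}$ the Hessian "remains uniformly invertible" as an $O(2^{-k})$ perturbation — cannot be drawn: an $O(2^{-k})$ perturbation of a singular matrix need not be uniformly invertible, and in fact the unrescaled Hessian has determinant of order $2^{-k}$ there, not $\gtrsim 1$.

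What makes the paper's argument work is precisely what your sketch omits. The $\eta$-block is $\partial_\eta^2\phi(s,x,\eta) = -s\,\partial_\xi^2 p(x,\eta) + O(s^2)$, so it is small but genuinely nonzero only for $s\neq 0$. After the anisotropic rescaling (replacing $s$ by $2^{-k}s'$ and multiplying the $\eta$-equation by $2^k$, as in the map $\Psi$ in the paper), this block becomes $\mathbf N = s'\partial_\xi^2 p + O(2^{-k})$ with $|s'|\sim 1$, and the rescaled Jacobian equals $\mathbf M + O(2^{-k})$ where $\mathbf M=\begin{pmatrix}0 & \mathrm v^\intercal\\ \mathrm v & \mathbf N\end{pmatrix}$, $\mathrm v=-\partial_\xi p$. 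The determinant bound $|\det\mathbf M|\gtrsim 1$ then relies crucially on assumption (A2): one needs $\inp{\mathrm v_\perp}{\mathbf N\mathrm v_\perp}\gtrsim 1$, i.e.\ nondegeneracy of $\partial_\xi^2 p$ on the tangent space to $\{p=0\}$, together with $|\mathrm v|\gtrsim 1$ from (A1). Your proposal attributes the invertibility "precisely" to $\partial_\xi p\neq 0$, but (A1) alone is not enough in two dimensions; the convexity condition (A2) and the restriction $|s|\sim 2^{-k}$ (rather than $s=0$) are both essential. Once this is fixed, your remaining steps (estimating $\Psi(u,v,2^k(u-v),\xi(u))=O(2^{-k})$, differentiating the implicit equation repeatedly and tracking the $2^k$ factor arising from the $v$-dependence) line up with the paper's.
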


\begin{proof}
    Using \eqref{i:phisol}, we calculate
    \[
    \partial_{(s,\eta)}\big(\phi(s,\gamma(u),\eta) - \inp{\gamma(v)}{\eta}\big) = \big(-p(\gamma(u),\partial_x\phi(s,\gamma(u),\eta)), \partial_\eta \phi(s,\gamma(u),\eta)- \gamma(v)\big).
    \]
    Now we let
    \begin{align*}
        \Psi(u,v,s',\eta) := \big(-p(\gamma(u),\partial_x\phi(2^{-k}s',\gamma(u),\eta)),\,& 2^{k}(\partial_\eta \phi(2^{-k}s',\gamma(u),\eta)- \gamma(v))\big).
    \end{align*}
    Using Taylor's theorem together with \eqref{i:1dphix}, \eqref{i:1dphiy} gives
    \begin{align}
    \label{hessianofphase}
        \partial_{(s',\eta)}\Psi(u,v,s',\eta) &=\begin{pmatrix}
            0 & \mathrm v^\intercal \\
            \mathrm v & \mathbf N
        \end{pmatrix}
        + O(2^{-k}) \\
        \nonumber
        &=:\mathbf M+O(2^{-k})
    \end{align}
    where $\mathrm v=-\partial_\xi p(\gamma(u),\eta)$ and $\mathbf N=s'\partial_\xi^2 p(\gamma(u),\eta)$. From \eqref{e:lbppxi} and \eqref{r:gamxiu}, it follows that $|\mathrm v|\gtrsim 1$ if $\eta$ and $\xi(u)$ are sufficiently close. By the condition (A2), we have
    \begin{equation}
    \label{ineq:vMv}
        \inp{\mathrm v_\perp} {\mathbf N\mathrm v_\perp} \gtrsim 1
    \end{equation}
where $\mathrm v_\perp$ denotes the unit vector in $\R^2$ such that $\langle \mathrm v,\mathrm v_\perp\rangle = 0$. We note that
\begin{align}\label{i:Mprod}
\mathbf M \cdot
    \begin{pmatrix}
        1 & 0 & 0\\
        0 & \mathrm v &\mathrm v_\perp 
    \end{pmatrix}
    =
    \begin{pmatrix}
        0 & |\mathrm v|^2 & 0 \\
        \mathrm v & \mathbf N\mathrm v &\mathbf N \mathrm v_\perp
    \end{pmatrix},
\end{align}
Combining with \eqref{ineq:vMv} yields $|\det \mathbf M|=|\det     \begin{pmatrix}
        0 & |\mathrm v|^2 & 0 \\
        \mathrm v & \mathbf N\mathrm v &\mathbf N \mathrm v_\perp
    \end{pmatrix}|\gtrsim 1$.
Thus the matrix $\mathbf M+O(2^{-k})$ is invertible and has a determinant whose absolute value is comparable to $1$ provided that $|\eta-\xi(u)|$ is sufficiently small. From \eqref{i:Mprod}, it also follows that $\|\mathbf M^{-1}\|\lesssim 1$. Moreover, a simple calculation gives
\begin{align}\label{e:Psihd}
    \big|\partial_{(u,s',\eta)}^\alpha \partial_v^\beta \Psi(u,v,s',\eta)\big|\le \begin{cases}
        C_\alpha, & \beta = 0, \\
        2^k C_{\alpha, \beta}, & \beta\neq 0,
    \end{cases} %C_\alpha,\quad \big|\partial_v^\beta \Psi(u,v,s',\eta)\big|\le 2^k C_\beta
\end{align}
for $(u,v,\eta)\in \mathcal A_{l,n}\times \R^2$, $|s'|\lesssim 1$, and $\alpha\in \N_0^4$, $\beta\in \N_0$ such that $|\alpha|\ge 1$.

On the other hand, using Taylor's theorem and $\dot\gamma(u)=\partial_\xi p(\gamma(u),\xi(u))$, we write
    \begin{align*}
        \gamma(u) - \gamma(v) = (u-v)\partial_\xi p(\gamma(u), \xi(u)) + O(|u-v|^2),
    \end{align*}
    which implies that, if $s'=2^k(u-v)$, $\eta=\xi(u)$,
    \[
    2^k\big(\partial_\eta \phi(2^{-k}s',\gamma(u),\eta)- \gamma(v)\big) = O(2^{-k}).
    \]
    Furthermore, using Taylor's theorem, \eqref{i:1dphix}, and the identity $p(\gamma(u),\xi(u)) = 0$, we have
    \[
    p(\gamma(u), \partial_x\phi(2^{-k}s', \gamma(u), \eta)) = O(2^{-k}),\quad \text{for $(s', \eta) = (2^k(u-v), \xi(u))$}.
    \]
    As a result, $\Psi(u,v,2^k(u-v), \xi(u)) = O(2^{-k})$. Recall that the factor $2^{-k}$ is assumed to be sufficiently small. From the previous discussion and the implicit function theorem, it follows that there exists a unique smooth function $(s'(u,v),\eta(u,v))$ defined on $\mathcal A_{l,n}$ such that
    \begin{gather*}
    (s'(u,v),\eta(u,v))\in B(2^k(u-v), C2^{-k})\times B(\xi(u), C2^{-k}), \\
    \Psi(u,v,s',\eta)\big|_{s'=s'(u,v), \eta = \eta(u,v)} = 0,
    \end{gather*}
    and for every $\beta\in \N_0^2$ satisfying $|\beta|\ge 1$,
    \begin{align}\label{b:bds'ze}
        \big|\partial_{(u,v)}^\beta s'(u,v)\big|,\ \big|\partial_{(u,v)}^\beta \eta(u,v)\big|\le 2^kC_\beta.
    \end{align}
    Now let 
    \[
    s(u,v):= 2^{-k}s'(u,v).
    \]
    It is clear that the functions $s(u,v), \eta(u,v)$ satisfy all the assertions of the proposition. This completes the proof.
\end{proof}

Let $\tilde s(s) := 2^{-k}s + s(u,v)$, $\tilde\eta(\eta):= \eta+ \eta(u,v)$. We define
\[
\wt\phi(u,v,s,\eta) := \phi(\tilde s(s),\gamma(u),\tilde\eta(\eta)) - \inp{\gamma(v)}{\tilde\eta(\eta)}.
\]
We note that $\partial_{(s,\eta)}\wt\phi(u,v,0,0) = 0$. It can be shown that $(u,v,0,0)$ is a non-degenerate stationary point of $\wt\phi$. Indeed,
\begin{align}\label{i:wtphipsi}
    \partial_{(s,\eta)}\wt\phi(u,v,s,\eta) = 2^{-k}\Psi(u,v,s+s'(u,v), \tilde\eta(\eta)),
\end{align}
where $\Psi$, $s'(u,v)$ are as in the proof of Lemma \ref{lem:seta}. Hence \eqref{hessianofphase} and the fact $|\det \mathbf M|\gtrsim 1$ give
\[
|\det\big(2^k\partial_{(s,\eta)}^2\wt\phi(u,v,0,0)\big)| \sim 1.
\]
Also, by \eqref{e:Psihd} and \eqref{i:wtphipsi}, the derivatives of $ \wt\phi(u,v,s,\eta)$ in $s,\eta$ satisfy
\[
2^k|\partial_{(s,\eta)}^\beta\wt\phi(u,v,s,\eta)|\lesssim 1, \quad \beta\in \N_0^3,\ |\beta|\ge 3
\]
for $(u,v,\eta)\in \mathcal A_{l,n}\times \R^2$, $|s|\lesssim 1$.
%Thus we can express the kernel $T_{k,l}^n(u,v)$ as an asymptotic expansion using the stationary phase method.
Therefore, making use of \cite[Theorem 7.7.5]{Hor83}, we get the expression
\begin{align}\label{i:asymker}
    T_{k,l}^n(u,v) = h^{-\frac12}2^{\frac k2} B_{k,l}^n(u,v)e^{\frac{i}{h}\wt\phi(u,v,0,0)} + E^n_{k,l}(u,v),
\end{align}
where
\[
B_{k,l}^n(u,v) := \varsigma_{l,n}(u)\varsigma'_{l,n}(v)\psi(2^l|u-v|) b(s(u,v), \gamma(u), \eta(u,v))
\]
and $E^n_{k,l}$ is a smooth function supported in $\mathcal A_{l,n}$ and satisfying $\|E^n_{k,l}\|_{L^\infty(\R^2)}\lesssim h^{\frac12}2^{\frac32 k}$.
By \eqref{b:dseta},
\begin{align}\label{b:dbs}
    |\partial_{(u,v)}^\beta B_{k,l}^n(u,v)|\lesssim 2^{k|\beta|},\quad \beta\in \N_0^2.
\end{align}
Let $\Phi(u,v):= \wt\phi(u,v,0,0)$. Then we note
\begin{align}\label{i:defPhi}
    \Phi(u,v) = \phi(s(u,v),\gamma(u),\eta(u,v))-\inp{\gamma(v)}{\eta(u,v)}.
\end{align}

\noindent\textit{When $h\lesssim 2^{-k}\lesssim h^{\frac{1}{2\sigma+1}}$} \text{($h\lesssim 2^{-k}\lesssim 1$ if $\sigma = \infty$)}.
From \eqref{i:asymker}, it follows that $|T_{k,l}^n(u,v)|\lesssim h^{-\frac12}2^{\frac k2}\varsigma_{l,n}(u)\varsigma'_{l,n}(v)$. By the H\"older inequality,
\begin{align*}
    \|T_{k,l}^n f\|_{L^2(I)}\lesssim h^{-\frac12}2^{-\frac k2}\|f\|_2.
\end{align*}
Combining this with Lemma \ref{lem:sumn}, we get $\|T_{k,l} f\|_{L^2(I)}\le C h^{-\frac12}2^{-\frac k2}\|f\|_2$. Summing over $l : 2^{-l}\sim 2^{-k}$ yields
\begin{align*}
    \bigg\|\sum_{l : 2^{-l}\sim 2^{-k}} T_{k,l} f\bigg\|_{L^2(I)}\lesssim h^{-\frac12}2^{-\frac k2}\|f\|_2.
\end{align*}
The required estimate in Proposition \ref{prop:esttk} now follows from the above estimate and \eqref{e:easy}. We also remark that this verifies Proposition \ref{prop:esttk} for the case $\sigma = \infty$.

From now on, we assume that $2^{-k}\gtrsim h^{\frac{1}{2\sigma+1}}$ and $\sigma<\infty$. We first note that the error term $E^n_{k,l}$ is easily dealt with. Indeed, recall that $\|E^n_{k,l}\|_{L^\infty(\R^2)}\lesssim h^{\frac12}2^{\frac32 k}$. Since $E^n_{k,l}$ is supported in $\mathcal A_{l,n}$, using H\"older's inequality, we deduce that the $L^2(I)\to L^2(I)$ norm of the operator
\[
f\mapsto \int E^n_{k,l}(u,v) f(v) dv
\]
is bounded by $C h^{1/2}2^{k/2}$ for $C>0$.
Notice that
\[
h^{\frac12}2^{\frac k2} \lesssim \min(2^{\sigma k}, h^{-\frac14} 2^{\frac{2\sigma-1}{4}k}),
\]
which means the bound for $E^n_{k,l}$ is acceptable. Applying Lemma \ref{lem:sumn} to sum over $n$ and then summing over $l: l\sim k$, one can verify that $\sum_{n,l}E^n_{k,l}$ satisfies the desired estimate in Proposition \ref{prop:esttk}.

Now we define
\begin{align}\nonumber
    \mathcal T^n_{k,l} f(u) := \int B_{k,l}^n(u,v) e^{\frac ih \Phi(u,v)} f(v) dv.
\end{align}
By the above discussion, to prove Proposition \ref{prop:esttk}, it suffices to show the following.
\begin{prop}\label{prop:mainred}
    Assume that $\sigma<\infty$ and $2^{-l}\sim 2^{-k}\gtrsim h^{\frac 1{2\sigma+1}}$. Then
\begin{align}\label{e:ctnkl}
    \|\mathcal T^n_{k,l} f\|_{L^2} \lesssim \begin{cases}
        h^{\frac12}2^{\frac{2\sigma-1}{2}k}, & \text{ if $\sigma$ is odd,} \\
        h^{\frac14}2^{\frac{2\sigma-3}{4}k}, & \text{ if $\sigma$ is even.}
    \end{cases}
\end{align}
\end{prop}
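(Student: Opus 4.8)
The plan is to run a $TT^{*}$ argument and reduce \eqref{e:ctnkl} to a Schur-type bound on the kernel of $(\mathcal T^{n}_{k,l})^{*}\mathcal T^{n}_{k,l}$, feeding in the phase-difference estimates of Section \ref{sec:phase}. Since $\|\mathcal T^{n}_{k,l}\|_{L^{2}\to L^{2}}^{2}=\|(\mathcal T^{n}_{k,l})^{*}\mathcal T^{n}_{k,l}\|_{L^{2}\to L^{2}}$ and the kernel of $(\mathcal T^{n}_{k,l})^{*}\mathcal T^{n}_{k,l}$ is
\[
\mathcal K(v,w)=\int \overline{B^{n}_{k,l}(u,v)}\,B^{n}_{k,l}(u,w)\,e^{\frac{i}{h}(\Phi(u,w)-\Phi(u,v))}\,du ,
\]
with $|\mathcal K(v,w)|=|\mathcal K(w,v)|$, it suffices to show that $\sup_{v}\int|\mathcal K(v,w)|\,dw$ is at most the square of the right-hand side of \eqref{e:ctnkl}. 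Here the $u$-integral runs over an interval of length $\sim 2^{-k}$, one has $|u-v|,|u-w|\sim 2^{-l}\sim 2^{-k}$ on the support of the amplitude, and by \eqref{b:dbs} the amplitude $\overline{B^{n}_{k,l}(u,v)}B^{n}_{k,l}(u,w)$ has $u$-derivatives of order $j$ bounded by $C_{j}2^{jk}$, hence is adapted to the $2^{-k}$-scale, so that integration by parts and van der Corput in $u$ cost only harmless powers of $2^{k}$.

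First I would decompose $w$ into dyadic shells $|v-w|\sim 2^{-m}$, $m\ge k$; for $2^{-m}$ below a fixed power of $h$ one just uses $|\mathcal K(v,w)|\lesssim 2^{-k}$ from $\|B^{n}_{k,l}\|_{\infty}\lesssim 1$. On the shell $|v-w|\sim 2^{-m}$, the mean value theorem turns lower bounds on the $u$-derivatives of $\Phi(u,w)-\Phi(u,v)$ into $2^{-m}$ times lower bounds on the corresponding $\partial_{v}$-derivatives of $\Phi$, and the essential input is that these degenerate only near the (at most one) point of contact order $\ge2$ --- this is Proposition \ref{prop:phibound}, which rests on identifying $\partial_{u}\partial_{v}\Phi(u,v)$ with a scalar multiple of $z_{v-u}(\gamma(u),\xi(u))-\gamma(v)$ together with the polynomial approximation of that difference. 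The estimate of $\mathcal K$ then splits according to the parity of $\sigma$, as in Figure \ref{fig:casesigma}. When $\sigma$ is odd, Proposition \ref{prop:phibound} gives a one-sided lower bound on $\partial_{u}(\Phi(u,w)-\Phi(u,v))$ on the whole support of the amplitude, so repeated integration by parts in $u$ makes $|\mathcal K(v,w)|$ rapidly small once $h$ is below the relevant scale; only finitely many shells then contribute, and summing them gives $\sup_{v}\int|\mathcal K(v,w)|\,dw\lesssim h\,2^{(2\sigma-1)k}$. When $\sigma$ is even, $\partial_{u}(\Phi(u,w)-\Phi(u,v))$ may vanish, so one splits the $u$-interval into the region where $|\partial_{u}(\Phi(u,w)-\Phi(u,v))|$ dominates, handled by integration by parts as before, and its complement, on which Proposition \ref{prop:phibound} supplies a lower bound on $\partial_{u}^{2}(\Phi(u,w)-\Phi(u,v))$; van der Corput's lemma of order two there gives the extra $h^{1/2}$ over the square root of the second-derivative bound, and summing over $m$ yields $\sup_{v}\int|\mathcal K(v,w)|\,dw\lesssim h^{1/2}2^{(2\sigma-3)k/2}$. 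In both cases this is the square of the corresponding bound in \eqref{e:ctnkl}.

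The hard part will be Proposition \ref{prop:phibound} itself --- extracting the sharp lower bounds for $\partial_{u}(\Phi(u,w)-\Phi(u,v))$ and, when $\sigma$ is even, for $\partial_{u}^{2}(\Phi(u,w)-\Phi(u,v))$, uniformly in the position of the $u$-interval and in particular near the contact point, which amounts to understanding how $z_{v-u}(\gamma(u),\xi(u))-\gamma(v)$ vanishes there and approximating it by an explicit polynomial; this is also the source of the odd/even dichotomy, since it decides whether that difference changes sign across the contact point and hence whether a first-derivative estimate for the phase difference already suffices. Granting Proposition \ref{prop:phibound}, the remaining bookkeeping above is routine, the one point to watch being that the hypothesis $2^{-l}\sim2^{-k}\gtrsim h^{1/(2\sigma+1)}$ is exactly what makes the geometric sums in $m$ close with the stated exponents.
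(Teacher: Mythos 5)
Your proposal tracks the paper's proof essentially exactly: $TT^{*}$ reduces the claim to a bound on the oscillatory kernel $\mathcal K(v,w)$, Proposition \ref{prop:phibound} supplies the needed lower bounds on $\partial_u(\Phi(u,v)-\Phi(u,w))$ (and on $\partial_u^2$ when $\sigma$ is even and $|n|\lesssim1$), and van der Corput of order one or two converts these into kernel decay that a Schur/Young step then integrates. The only difference is cosmetic bookkeeping: the paper rescales $(u,v)\mapsto(2^{-k}u+u_0,\,2^{-k}v+v_0)$ and applies Young's inequality directly, whereas you decompose dyadically in $|v-w|$ and sum, which lands in the same place.
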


\subsection{Handling the core part}
To verify Proposition \ref{prop:mainred}, we follow a standard $L^2$ argument. We write
\begin{align*}%\label{i:tklnl2}
    \|\mathcal T^n_{k,l} f\|_{L^2}^2 = \iint\bigg(\int e^{\frac ih(\Phi(u,w) - \Phi(u,v))} B_{k,l}^n(u,v) B_{k,l}^n(u,w) du\bigg) f(v)f(w) dvdw.
\end{align*}

To make the $L^2$ argument work in our analysis, we should obtain a favorable bound for the term inside the parenthesis on the right-hand side. This can be done by controlling derivatives of $\Phi(u,w) - \Phi(u,v)$. 
\begin{prop}\label{prop:phibound}
Let $(u_0,v_0)\in \mathcal A_{l,n}$. Assume that $|u-u_0|,|v-v_0|,|w-v_0|\le \varepsilon 2^{-k}$ for small $\varepsilon>0$. Then the following estimates hold.
\begin{enumerate}[topsep=-1pt, itemsep=2pt]
    \item [i)] Upper bounds of the phase difference. For $m\ge 1$,
    \begin{align}
\nonumber
\big|\partial_{u}^m(\Phi(u,v)-\Phi(u,w))\big|&\lesssim 2^{-(2\sigma-m)k}(1+|n|^{2\sigma-2})|v-w|.
\end{align}
    \item [ii)] Lower bounds of the phase difference. Assume that either $|n|\gg1$ or $\sigma$ is odd. Then
    \begin{equation}
\label{ineq:lophase1}
    \big|\partial_u( \Phi(u,v) -  \Phi(u,w))\big|\gtrsim 2^{-(2\sigma-1) k}(1+|n|^{2\sigma-2})|v-w|,
\end{equation}
Furthermore, if  $|n|\lesssim 1$ and $\sigma$ is even, then
\begin{equation}
\label{ineq:lophase2}
    2^k\big|\partial_u( \Phi(u,v) -  \Phi(u,w))\big|+\big|\partial_u^2( \Phi(u,v) -  \Phi(u,w))\big|\gtrsim 2^{-(2\sigma-2) k}|v-w|.
\end{equation}
\end{enumerate}
\end{prop}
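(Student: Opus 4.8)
The plan is to pass, via the envelope structure of the critical point defining $\Phi$, to near‑diagonal expansions, to identify the mixed derivative $\partial_u\partial_v\Phi$ with a multiple of the contact‑defect vector $z_{v-u}(\gamma(u),\xi(u))-\gamma(v)$, and then to read the estimates off from the polynomial shape of that defect --- the parity of $\sigma$ entering exactly through the sign of this polynomial. Concretely, since $(s(u,v),\eta(u,v))$ is a critical point of $(s,\eta)\mapsto\phi(s,\gamma(u),\eta)-\inp{\gamma(v)}{\eta}$, so that $\partial_\eta\phi(s(u,v),\gamma(u),\eta(u,v))=\gamma(v)$ and, by \eqref{i:phisol}, $p(\gamma(u),\partial_x\phi(s(u,v),\gamma(u),\eta(u,v)))=0$, the envelope theorem yields $\partial_v\Phi(u,v)=-\inp{\dot\gamma(v)}{\eta(u,v)}$ and hence $\partial_u\partial_v\Phi(u,v)=-\inp{\dot\gamma(v)}{\partial_u\eta(u,v)}$. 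Writing $\Phi(u,v)-\Phi(u,w)=\int_w^v\partial_\tau\Phi(u,\tau)\,d\tau$ and differentiating under the integral reduces both parts of the proposition to bounds on $\partial_u^m\partial_\tau\Phi(u,\tau)$ (for $m\ge1$ in i), for $m\le2$ in ii)), uniformly for $\tau$ near $v_0$ and $u$ near $u_0$; the factor $|v-w|$ is just the length of the $\tau$‑interval, and in ii) one uses in addition that this interval, of length $\le\eps2^{-k}$, is much shorter than the scale $|u-v|\sim2^{-k}$ on which $\partial_u^m\partial_\tau\Phi$ varies, so no sign change or loss occurs upon integrating.

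The heart of the matter is to express $\partial_u\partial_v\Phi(u,v)$, equivalently $\partial_u\eta(u,v)$, in terms of the contact defect. Differentiating the critical‑point equations through the implicit function theorem --- using \eqref{b:dseta} and the bound $\|\mathbf M^{-1}\|\lesssim1$ from the proof of Lemma \ref{lem:seta} --- and inserting the flow relation \eqref{i:relphi}, which here reads $z_{s(u,v)}(\gamma(v),\eta(u,v))=\gamma(u)$, gives a relation of the form $\partial_u\partial_v\Phi(u,v)=(\text{bounded smooth factor})\cdot\bigl[\text{a component of } z_{v-u}(\gamma(u),\xi(u))-\gamma(v)\bigr]+(\text{lower order})$, with relative errors $O(2^{-k})$. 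This is the crucial cancellation: \eqref{b:dseta} by itself gives only $\|\partial_u\eta\|=O(2^k)$, but the expression is genuinely small because the defect is --- the curve $\gamma$ and the bicharacteristic issuing tangentially from $(\gamma(u),\xi(u))$ meet when $v=u$. Next one Taylor‑expands $r\mapsto z_r(\gamma(u),\xi(u))-\gamma(u+r)$ first in $r$, where the definition of $\sigma_{\gamma,p}(u)$ forces its first $\sigma_{\gamma,p}(u)$ coefficients to vanish, and then in $u$, using the reduction of Proposition \ref{prop:main} (so that $\sigma_{\gamma,p}(u)=1$ for all $u$, with equality to $\sigma$ at most at $u=0$) together with Lemma \ref{lem:finite} to pin down the $u$‑dependence of the surviving coefficients. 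This yields the polynomial approximation \eqref{i:diff2}: up to errors of lower order on all relevant scales,
\[
z_{v-u}(\gamma(u),\xi(u))-\gamma(v)\ \approx\ \mathbf e\,\bigl(v^{\sigma+1}-u^{\sigma+1}-(\sigma+1)u^\sigma(v-u)\bigr),
\]
a fixed nonzero vector times the first‑order Taylor remainder at $u$ of $t\mapsto t^{\sigma+1}$, reflecting contact of order exactly $\sigma$ at $0$ and exactly $1$ elsewhere.

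Part i) is then a bookkeeping estimate: substitute the polynomial, recall $u\sim u_0\sim n2^{-k}$ and $|u-v|\sim2^{-k}$, observe that each $\partial_u$ on a degree‑$(\sigma+1)$ polynomial in $(u,v)$ localized at these scales costs one power of $\max(|u|,|u-v|)\sim\max(1,|n|)2^{-k}$, and control the error terms generated along the way through \eqref{b:dseta}; one obtains $|\partial_u^m\partial_\tau\Phi|\lesssim2^{-(2\sigma-m)k}(1+|n|^{2\sigma-2})$, and integrating in $\tau$ supplies the factor $|v-w|$. For part ii) the sign of the defect polynomial governs everything --- geometrically, whether $\gamma$ stays on one side of the bicharacteristic near the distinguished point or crosses it (Figure \ref{fig:casesigma}). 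If $|n|\gg1$, the distinguished point lies at distance $\gg2^{-k}$ from the whole $u$‑interval, so the leading coefficient $\tfrac12\partial_t^2(t^{\sigma+1})|_{t=u}\sim u^{\sigma-1}$ of the defect (comparable to the geodesic curvature of $\gamma$ at $\gamma(u)$) keeps a fixed nonzero sign there; then $\partial_u\partial_\tau\Phi$ has a definite sign of size $\sim|n|^{2\sigma-2}2^{-(2\sigma-1)k}$, and integrating in $\tau$ gives \eqref{ineq:lophase1}. If $|n|\lesssim1$ and $\sigma$ is odd, then $\sigma+1$ is even and $t\mapsto t^{\sigma+1}$ is monotone; consequently both factors of $\partial_u(\Phi(u,v)-\Phi(u,w))$ --- a difference of secant slopes of $t\mapsto t^{\sigma+1}$ and a factor $\propto\bigl(u^\sigma-\operatorname{avg}t^\sigma\bigr)$ that a priori could change sign --- in fact keep definite signs, of sizes $\gtrsim2^{-(\sigma-1)k}|v-w|$ and $\gtrsim2^{-\sigma k}$ respectively, uniformly in $u$, again yielding \eqref{ineq:lophase1}. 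Finally, if $|n|\lesssim1$ and $\sigma$ is even, then $\sigma+1$ is odd and $\gamma$ crosses the bicharacteristic, so $\partial_u(\Phi(u,v)-\Phi(u,w))$ --- being essentially a product of two affine functions of $u$ on the scale $2^{-k}$ --- genuinely vanishes; but its two zeros and the single zero of $\partial_u^2(\Phi(u,v)-\Phi(u,w))$ are pairwise separated by $\sim2^{-k}$, so on any $u$‑window of length $\eps2^{-k}$ the two derivatives never vanish together, and a scaling/compactness argument gives $2^k|\partial_u(\cdot)|+|\partial_u^2(\cdot)|\gtrsim2^{-(2\sigma-2)k}|v-w|$, i.e. \eqref{ineq:lophase2}.

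I expect two points to require real care. The first is establishing the polynomial approximation \eqref{i:diff2} with error terms that are provably of lower order \emph{uniformly} in $k$ and $n$ over the relevant range: one must track how all the Taylor coefficients of the defect in $v-u$ depend on the base point $u$, using Lemma \ref{lem:finite} to exclude accumulation of higher‑contact points, and then verify that the discarded terms are dominated by the polynomial both near and far from the distinguished point, where the competing scales $|u|\sim|n|2^{-k}$ and $|u-v|\sim2^{-k}$ exchange their roles. The second is the sign analysis of part ii): making the dichotomy ``$\gamma$ on one side versus $\gamma$ crossing'' quantitative, and, in the even case, showing that the first and second $u$‑derivatives of the phase difference cannot both be small --- in essence a monotonicity/convexity property of $t\mapsto t^{\sigma+1}$ --- in a manner stable under the perturbation by the bounded coefficient and by the lower‑order remainders.
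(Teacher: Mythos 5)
Your overall strategy matches the paper's: express the mixed derivative $\partial_u\partial_v\Phi$ via the critical-point/envelope identity, relate it to the contact defect $z_{v-u}(\gamma(u),\xi(u))-\gamma(v)$, Taylor-expand the defect (giving the polynomial $P_\sigma(u,v)$), and then let the parity of $\sigma$ --- through the sign/root structure of the resulting polynomials --- drive the lower bounds, treating the cases $|n|\gg1$, $|n|\lesssim1$ with $\sigma$ odd, and $|n|\lesssim1$ with $\sigma$ even separately. That is exactly the paper's Section \ref{sec:phase}. You also correctly identify the two real difficulties (uniformity of the polynomial approximation over the two scales $|u|\sim|n|2^{-k}$ and $|u-v|\sim2^{-k}$, and the quantitative sign analysis for even $\sigma$).

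There is, however, one substantive gap. You write that the critical-point manipulation yields
\[
\partial_u\partial_v\Phi(u,v)=(\text{bounded smooth factor})\cdot\bigl[\text{a component of }z_{v-u}(\gamma(u),\xi(u))-\gamma(v)\bigr]+(\text{lower order}),
\]
i.e.\ a relation \emph{linear} in the defect. This cannot be right: the defect is of size $\sim(v-u)^2u^{\sigma-1}$, which would give $|\partial_u\partial_v\Phi|\sim 2^{-(\sigma+1)k}|n|^{\sigma-1}$, not the claimed (and correct) $2^{-(2\sigma-1)k}|n|^{2\sigma-2}$. What actually happens --- and what the paper isolates in \eqref{i:2dphi1}, \eqref{ineq:diff4}, and \eqref{i:uvphiexp1}--\eqref{i:uvphiexp2} --- is a \emph{product} of two defect-sized quantities: $\partial_u\partial_v\Phi\approx\bigl[\tfrac{\text{defect}}{v-u}\bigr]\cdot\partial_v\bigl[\tfrac{\text{defect}}{v-u}\bigr]\cdot\mathcal A(u)$, where $\mathcal A$ is the genuinely bounded factor. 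In your parametrization this means that $\partial_u\eta(u,v)$ must itself be shown to be defect-like (not merely $O(2^k)$ as \eqref{b:dseta} gives a priori), which requires an analogue of Lemma \ref{lem:setab} for $\xi(v)-\eta(u,v)$ and its derivatives; you do not establish this, and without it the asserted $(1+|n|^{2\sigma-2})$ upper bound and the lower bounds in part ii) do not follow from your stated formula. You do implicitly invoke the correct product structure later when you speak of ``both factors'' (secant slope and $u^\sigma$-average factor) in the odd-$\sigma$ case, but as written the identification step and the subsequent estimates are inconsistent. Once the product $\wp_{\sigma,1}(\tau)\wp_{\sigma,2}(\tau)$ with $\tau=u(v-u)^{-1}$ is put in place, the parity analysis you sketch agrees with the paper's Lemma \ref{lem:roots} (no real roots when $\sigma$ is odd; two distinct simple roots $\tau_1\in(-1/2,0)$, $\tau_2\in(-1,-1/2)$ when $\sigma$ is even, which is what makes the ``first or second $u$-derivative is large'' mechanism work), and the rest of the argument goes through as you outline.
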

Postponing the proof of the proposition until the next section and assuming its validity, we complete the proof of Proposition \ref{prop:mainred}.

\begin{proof}[Proof of Proposition \ref{prop:mainred}]
  %Choose $(u_0,v_0)\in \mathcal A_{l,n}$ and fix it. 
  Decomposing the support of $B_{k,l}^n(u,v)$ into finite small pieces, we may assume that $B_{k,l}^n(u,v)$ vanish unless $|u-u_0|\le \eps 2^{-k}$, $|v-v_0|\le \eps 2^{-k}$ with some $(u_0,v_0)\in \mathcal A_{l,n}$ and sufficiently small $\eps>0$. Fix such $u_0, v_0$.
  This implies that $B_{k,l}^n(u,v)B_{k,l}^n(u,w)$ is supported in $|v-w|\le 10\eps 2^{-k}$.
Now we define
\begin{align*}
    \wt B_{k,l}^n(u,v) &:= B_{k,l}^n(2^{-k}u + u_0,2^{-k}v + v_0), \\
    \wt \Phi(u,v) &:= \Phi(2^{-k}u + u_0,2^{-k}v + v_0), \\
    \wt{\mathcal T}_{k,l}^n f(u) &:= \int \wt B_{k,l}^n(u,v)e^{\frac ih \wt \Phi(u,v)} f(v) dv.
\end{align*}
\subsubsection*{When $|n|\gg 1$ or $\sigma$ is odd}

From \eqref{b:dbs} and Proposition \ref{prop:phibound}, it follows that, for $m\ge 1$ and $(u,v,w)\in \R^3$ such that $\wt B_{k,l}^n(u,v)\wt B_{k,l}^n(u,w)\neq 0$,
\begin{align}
    \begin{aligned}\label{e:scbdsc1}
        \big|\partial_u(\wt \Phi(u,v) - \wt \Phi(u,w))\big|&\gtrsim 2^{-(2\sigma+1)k}|n|^{2\sigma-2}|v-w|, \\
        \big|\partial_u^m(\wt \Phi(u,v) - \wt \Phi(u,w))\big|&\lesssim 2^{-(2\sigma+1)k}|n|^{2\sigma-2}|v-w|, \\
        \big|\partial_u^m \wt B_{k,l}^n(u,v)\big|&\lesssim 1.
    \end{aligned}
\end{align}
Also, $\wt B_{k,l}^n$ is supported in a rectangle whose side lengths are less than $10\eps$.
Thus, by van der Corput's lemma, we have
\begin{align*}
    \bigg|\int e^{\frac ih(\wt\Phi(u,w) - \wt\Phi(u,v))} \wt B_{k,l}^n(u,v) \wt B_{k,l}^n(u,w) du\bigg|\lesssim \big(1 + h^{-1}2^{-(2\sigma+1)k}|n|^{2\sigma-2}|v-w|\big)^{-1},
\end{align*}
which implies, by Young's inequality,
\begin{align}\nonumber
    \big\|\wt{\mathcal T}_{k,l}^n f\big\|_{L^2}^2&\lesssim \|f\|_{L^2}^2 \int_{-1}^1 \big(1 + h^{-1}2^{-(2\sigma+1)k}|n|^{2\sigma-2}|v|\big)^{-1} dv  \\\label{ineq:wtklnc1}
    &\lesssim h\,2^{(2\sigma+1)k}|n|^{-2(\sigma-1)} \|f\|_{L^2}^2.
\end{align}
To obtain an estimate for $\mathcal T_{k,l}^n$ from the above inequality, we make the change of variables $u \to 2^{-k}u + u_0$, $v \to 2^{-k}v + v_0$ to see $\|\mathcal T_{k,l}^n\|_{L^2\to L^2} = 2^{-k}\|\wt{\mathcal T}_{k,l}^n\|_{L^2\to L^2}$. Being combined with \eqref{ineq:wtklnc1}, it gives
\begin{align*}
    \big\|\mathcal T_{k,l}^n f\big\|_{L^2}\lesssim h^{\frac12} 2^{\frac{2\sigma-1}{2}k} |n|^{-(\sigma-1)} \|f\|_{L^2},
\end{align*}
where the bound is better than those in \eqref{e:ctnkl}.

\subsubsection*{When $|n|\lesssim 1$ and $\sigma$ is even} 
By \eqref{ineq:lophase2}, we have
\[
\sum_{j=1}^2\big|\partial_u^j( \wt\Phi(u,v) -  \wt\Phi(u,w))\big|\gtrsim 2^{-(2\sigma+1)k}|v-w|.
\]
Recall that the functions $\wt\Phi$, $\wt B_{k,l}^n$ satisfy the upper bounds in \eqref{e:scbdsc1}. Thus applying van der Corput's lemma gives
\begin{align}
    \begin{aligned}\nonumber
            &\bigg|\int e^{\frac ih(\wt\Phi(u,w) - \wt\Phi(u,v))} \wt B_{k,l}^n(u,v) \wt B_{k,l}^n(u,w) du\bigg| \lesssim \big|h^{-1}2^{-(2\sigma+1)k}|v-w|\big|^{-\frac12}.
    \end{aligned}
\end{align}
Therefore, by Young's inequality, we obtain
\[
\big\|\wt{\mathcal T}_{k,l}^n f\big\|_{L^2}^2 \lesssim h^\frac12\,2^{\frac{2\sigma+1}{2}k} \|f\|_{L^2}^2,
\]
which, combined with $\|\mathcal T_{k,l}^n\|_{L^2\to L^2} = 2^{-k}\|\wt{\mathcal T}_{k,l}^n\|_{L^2\to L^2}$, gives the second estimate in \eqref{e:ctnkl}. 
\end{proof}

\section{Proof of Proposition \ref{prop:phibound}}\label{sec:phase}

In this section, Proposition \ref{prop:phibound} is established. Recall that the length of the interval $I$ is assumed to be sufficiently small, thus for $u,v\in \mathcal A_{l,n}$, $|u|$, $|v|\le \eps$ with small $\eps>0$. It allows us to assume that $2^{-k}\le \eps$. Throughout this section, we assume $|u-u_0|, |v-v_0|, |w-v_0|\le \eps 2^{-k}$ for a fixed $(u_0,v_0)\in \mathcal A_{l,n}$.

\subsection{Expression for $\partial_u\partial_v \Phi$}
By the Taylor expansion, Proposition \ref{prop:phibound} follows once we get favorable bounds for the second-order derivative $\partial_u\partial_v \Phi$ and higher order derivatives of $\Phi$. For this purpose, we obtain an expression for $\partial_u\partial_v \Phi$, which indicates that $\partial_u\partial_v \Phi$ can be approximated by a polynomial.

Using \eqref{i:defPhi} and \eqref{i:seta0}, we compute
\begin{align}\nonumber
    \partial_u\partial_v \Phi(u,v) 
    &= \partial_v\Big(\partial_{(s,\eta)}\big(\phi(s,\gamma(u),\eta) - \inp{\gamma(v)}{\eta}\big)\big|_{(s,\eta) = (s(u,v), \eta(u,v))}\cdot \partial_u(s(u,v),\eta(u,v)) \\\nonumber
    & \qquad+ \inp{\dot\gamma(u)}{\partial_x\phi(s(u,v),\gamma(u),\eta(u,v))}\Big) \\\nonumber
    &=\inp{\dot\gamma(u)}{\partial_v k(u,v)},
\end{align}
 where 
\[
k(u,v) := \partial_x\phi(s(u,v),\gamma(u),\eta(u,v)).
\]
Recall that $p(\gamma(u), k(u,v)) = 0$. Differentiating both sides of the identity in $v$ gives $\partial_\xi p(\gamma(u), k(u,v))\partial_v k(u,v) = 0$. Using this along with $\dot\gamma(u) = \partial_\xi p(\gamma(u), \xi(u))$ yields
\[
\inp{\dot\gamma(u)}{\partial_v k(u,v)} = -\inp{\partial_\xi p(\gamma(u), k(u,v)) - \partial_\xi p(\gamma(u), \xi(u))}{\partial_v k(u,v)}.
\]
Thus, applying Taylor's theorem, we have
\begin{align}\label{i:2dphi1}
    \partial_u\partial_v\Phi(u,v) = (\xi(u) - k(u,v))^\intercal \partial_\xi^2 p(\gamma(u), \xi(u))\partial_v k(u,v) + R_1(u,v),
\end{align}
where $R_1(u,v)$ is defined by
\begin{align*}
    R_1(u,v) &:= -\frac 12\int_0^1 \big\langle \partial_\tau^2\big(\partial_\xi p(\gamma(u), \xi_{u,v}(\tau))\big), \partial_v k(u,v)\big\rangle (1-\tau) d\tau, \\
    \xi_{u,v}(\tau) &:= \tau k(u,v)+(1-\tau)\xi(u).
\end{align*}
Notice that $\partial_v k(u,v) = -\partial_v(\xi(u) - k(u,v))$. Hence one may appeal to the results \eqref{r:dseta}, \eqref{b:dseta} for the bounds for $\xi(u) - k(u,v)$ to derive certain bounds on $\partial_u\partial_v\Phi$. However, a straightforward calculation reveals that the resulting bound is insufficient for proving the proposition, particularly when $\sigma$ is large. Hence our main task will be to obtain more refined bounds on the size of the vector $\xi(u) - k(u,v)$ and its derivative in $v$. 

%Although \eqref{r:dseta} and \eqref{b:dseta} allow us to derive certain bounds for $\partial_u\partial_v\Phi$, a straightforward calculation reveals that the resulting bound is insufficient for proving the proposition, particularly when $\sigma$ is large. Hence our main task will be to obtain more refined bounds on the size of the vector $\xi(u) - k(u,v)$ and its derivative in $v$. 

Our strategy is to find an appropriate polynomial in $u,v$ approximating $\xi(u) - k(u,v)$ well. For that, we start by seeking a relation between $\xi(u) - k(u,v)$ and $z_{v-u}(\gamma(u),\xi(u))-\gamma(v)$.
From \eqref{i:relphi}, we see that $z_{v-u}(\gamma(u),\xi(u))-\gamma(v)$ is equal to 
\begin{align}\label{i:relk}
    z_{v-u}(\gamma(u),\xi(u))-z_{-s(u,v)}(\gamma(u),k(u,v)).
\end{align}
By Taylor's theorem, \eqref{i:relk} is expressed by
\begin{align*}
    \partial_t z_t(&\gamma(u),\xi(u))\Big|_{t=v-u}(v-u+s(u,v))  + \partial_\xi z_{v-u}(\gamma(u),\xi(u)) (\xi(u)-k(u,v)) + R_2
\end{align*}
with the remainder term $R_2=R_2(u,v)$. Applying Taylor's theorem again in $t$  gives
\begin{align*}
     \quad\partial_t z_t(\gamma(u),&\,\xi(u))\big|_{t=v-u}(v-u+s(u,v)) \\
    & + \partial_\xi z_{v-u}(\gamma(u),\xi(u)) (\xi(u)-k(u,v)) \\
    =  \partial_\xi p(\gamma(u),&\,\xi(u)) (v-u+s(u,v)) \\
    &+ (v-u)\partial^2_\xi p(\gamma(u), \xi(u)) (\xi(u) - k(u,v)) + R_3(u,v),
\end{align*}
where $R_3$ denotes the remainder.
We have also used $\dot z_t(x,\xi) = \partial_\xi p(\kappa_t(x,\xi))$ and $\partial_\xi z_0(x,\xi) = 0$ for $x,\xi\in\R^2$.
Consequently, we deduce that
\begin{align}
\begin{aligned}\label{i:diff1}
    z_{v-u}(&\gamma(u),\xi(u))-\gamma(v) \\
    &= \partial_\xi p(\gamma(u),\,\xi(u)) (v-u+s(u,v)) + (v-u)\partial^2_\xi p(\gamma(u), \xi(u)) (\xi(u) - k(u,v))
\end{aligned}
\end{align}
modulo the remainder $R_2+R_3$. The following shows that the remainder terms $R_1, R_2, R_3$ are errors which we can discard in later argument.
\begin{lem}\label{lem:error}
    Let $\alpha\in \N_0^2$. Then we have
    \begin{align}\label{b:r1}
        \big|\partial_{(u,v)}^\alpha R_1(u,v)\big|\lesssim 2^{(|\alpha|-1)k}\big(|u|+2^{-k}\big)^{2\sigma-1},
    \end{align}
    and
    \begin{align}\label{b:r2r3}
        \big|\partial_{(u,v)}^\alpha R_2(u,v)\big| + \big|\partial_{(u,v)}^\alpha R_3(u,v)\big|\lesssim 2^{(|\alpha|-2)k}\big(|u|+2^{-k}\big)^{\sigma}.
    \end{align}
    Moreover, we have
    \begin{align}\label{b:errxik}
        \big|\partial_{(u,v)}^\alpha\big(\inp{\partial_\xi p(\gamma(u),\xi(u))}{\xi(u)-k(u,v)}\big)\big|\lesssim 2^{(|\alpha|-1)k}\big(|u|+2^{-k}\big)^{\sigma}.
    \end{align}
\end{lem}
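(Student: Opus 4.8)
The plan is to estimate each remainder by writing it as an integral of the $m$-th order Taylor remainder (with $m=2$ for $R_1$, $m=3$ for the chain of expansions producing $R_2$ and $R_3$) and then controlling the integrand using two ingredients: (a) the quantitative bounds on $s(u,v)$ and $\eta(u,v)$ from Lemma \ref{lem:seta}, namely $s(u,v)\in B(u-v,C2^{-2k})$, $\eta(u,v)\in B(\xi(u),C2^{-k})$, and $|\partial_{(u,v)}^\beta s(u,v)|\le C_\beta$, $|\partial_{(u,v)}^\beta\eta(u,v)|\le 2^kC_\beta$; and (b) the key geometric input that $\mathcal G_1^{\gamma,p}=[a,b]$ and $\mathcal G_2^{\gamma,p}$ is at most the singleton $\{0\}$, which forces the vanishing $\gamma^{(j)}(u)=\partial_s^jz_s(\gamma(u),\xi(u))|_{s=0}$ for $j\le\sigma$ at $u=0$ and, via the definition of $\sigma=\sigma_{\gamma,p}$, a polynomial-type bound $|z_{v-u}(\gamma(u),\xi(u))-\gamma(v)|\lesssim(|u|+|u-v|)^{\sigma+1}+\dots$ in a later step. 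The factor $(|u|+2^{-k})^{2\sigma-1}$ or $(|u|+2^{-k})^{\sigma}$ in the claimed bounds is exactly what one expects: near the one bad point $u=0$ the relevant vector field $\mathbf H(u)=\dot\xi(u)+\partial_xp(\gamma(u),\xi(u))$ vanishes to high order, and since $|u-v|,|u-u_0|\lesssim2^{-k}$ one has $|v-u|\lesssim2^{-k}$, so the size of $u$ is what governs everything.

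First I would treat \eqref{b:errxik}. Differentiating $p(\gamma(u),k(u,v))=0$ in $v$ gives $\langle\partial_\xi p(\gamma(u),k(u,v)),\partial_vk(u,v)\rangle=0$; combining with $\dot\gamma(u)=\partial_\xi p(\gamma(u),\xi(u))$ and Taylor-expanding $\partial_\xi p$ in the second slot around $\xi(u)$ shows $\langle\partial_\xi p(\gamma(u),\xi(u)),\xi(u)-k(u,v)\rangle$ is, up to a term quadratic in $\xi(u)-k(u,v)$, equal to $p(\gamma(u),\xi(u))-p(\gamma(u),k(u,v))=0$. So the left side of \eqref{b:errxik} is $O(|\xi(u)-k(u,v)|^2)$ together with its derivatives, and one then needs a preliminary bound $|\partial_{(u,v)}^\alpha(\xi(u)-k(u,v))|\lesssim 2^{(|\alpha|-\frac12)k}(\dots)$ — but to get the precise power $(|u|+2^{-k})^\sigma$ one should instead bootstrap off \eqref{i:diff1}: the first term on the right of \eqref{i:diff1} carries the factor $(v-u+s(u,v))$, which is $O(2^{-2k})$ by \eqref{r:dseta}, and the scalar projection onto $\partial_\xi p(\gamma(u),\xi(u))$ of the left side of \eqref{i:diff1} is exactly $\langle\partial_\xi p,\xi(u)-k(u,v)\rangle$ modulo the $\partial_\xi^2 p$ term (which is orthogonal-ish) plus the remainders; so \eqref{b:errxik} and the estimate for $R_2,R_3$ must really be proven together with the polynomial approximation \eqref{i:diff2} announced in the strategy. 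I would therefore structure the argument as a simultaneous induction: assume the polynomial approximation for $\xi(u)-k(u,v)$ (to be established in the next subsection), plug it into the explicit integral formulas for $R_1,R_2,R_3$, and read off the bounds.

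Concretely: for $R_1$, the integrand is $\langle\partial_\tau^2(\partial_\xi p(\gamma(u),\xi_{u,v}(\tau))),\partial_vk(u,v)\rangle$, and $\partial_\tau\xi_{u,v}(\tau)=k(u,v)-\xi(u)$, so $\partial_\tau^2(\cdots)$ is a bilinear expression in $(k(u,v)-\xi(u))$ contracted against $\partial_\xi^3 p$; thus $|R_1|\lesssim|\xi(u)-k(u,v)|^2\cdot|\partial_vk(u,v)|$, and after differentiating $|\alpha|$ times (each $v$- or $u$-derivative hitting $\eta(u,v)$ or $s(u,v)$ costs at most $2^k$) and using $|\xi(u)-k(u,v)|\lesssim(|u|+2^{-k})^{\sigma}$ plus $|\partial_vk|\lesssim(|u|+2^{-k})^{\sigma-1}$-type bounds, one lands on $2^{(|\alpha|-1)k}(|u|+2^{-k})^{2\sigma-1}$ — note $2\sigma-1=\sigma+(\sigma-1)$, which matches. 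For $R_2$ and $R_3$, these are the second-order Taylor remainders of $z_t$ in $(\xi,t)$ about $(\xi(u),v-u)$ evaluated at $(k(u,v),-s(u,v))$; the increments are $\xi(u)-k(u,v)$ and $v-u+s(u,v)=O(2^{-2k})$, and the Hessian of $z_t$ in those variables is $O(2^{-k})$ near $t=0$ after extracting the leading $\partial_\xi^2 p$ term already displayed in \eqref{i:diff1}, giving the $2^{-2k}\cdot$(size)$^2$ scaling; the $(|u|+2^{-k})^\sigma$ comes again from the high-order vanishing of $\xi(u)-k(u,v)$ at $u=0$. \textbf{The main obstacle} I anticipate is not any single one of these estimates but the fact that the lemma cannot honestly be proven in isolation: the bound $|\xi(u)-k(u,v)|\lesssim(|u|+2^{-k})^\sigma$ that all three estimates rely on is itself a consequence of the polynomial-approximation analysis of the next subsection (which in turn uses the definition of $\sigma$ and the Whitney/Taylor machinery from Lemma \ref{lem:finite}). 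So the cleanest route is to prove Lemma \ref{lem:error} as a corollary \emph{after} establishing \eqref{i:diff2}, or to set up a joint induction on $|\alpha|$ in which the remainder bounds and the polynomial approximation are strengthened hypotheses fed to each other; getting the bookkeeping of the $2^k$-loss-per-derivative right, uniformly in $n$ and in the regime $2^{-k}\gtrsim h^{1/(2\sigma+1)}$, is the delicate part.
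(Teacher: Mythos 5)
Your proposal correctly identifies both the structure and the central difficulty of the paper's argument: the bounds on $R_1,R_2,R_3$ cannot be extracted directly from the crude bounds \eqref{r:dseta}, \eqref{b:dseta} of Lemma \ref{lem:seta} but require the refined estimate on $\xi(u)-k(u,v)$ and $v-u+s(u,v)$, and that refined estimate is obtained by an induction on $|\alpha|$ that feeds the polynomial approximation \eqref{i:diff2} and the near-orthogonality of $\partial_\xi p(\gamma(u),\xi(u))$ and $\xi(u)-k(u,v)$ (your expansion $0=p(\gamma(u),\xi(u))-p(\gamma(u),k(u,v))$) back into \eqref{i:diff1}. This is exactly what the paper does: it first proves Lemma \ref{lem:setab} giving $\mathfrak d^\alpha \lesssim 2^{(|\alpha|-2)k}(|u|+2^{-k})^{\sigma-1}$ by induction on $|\alpha|$ (with the polynomial approximation and the orthogonality inequality \eqref{e:perpineq} as the main ingredients), and then reads off \eqref{b:r1}--\eqref{b:errxik} from explicit Taylor-remainder representations of $R_1,R_2,R_3$. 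The "joint induction / prove-after-\eqref{i:diff2}" route you propose is the same resolution of the circularity that the paper adopts.

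One bookkeeping slip worth flagging: in your sketch of $R_1$ you write $|R_1|\lesssim|\xi(u)-k(u,v)|^2|\partial_v k(u,v)|$ and then justify the exponent by "$2\sigma-1=\sigma+(\sigma-1)$," which counts only one power of $|\xi(u)-k(u,v)|$. With your stated bounds this would give $(|u|+2^{-k})^{3\sigma-1}$ rather than $(|u|+2^{-k})^{2\sigma-1}$; the estimate still goes through since $3\sigma-1\ge 2\sigma-1$ and $|u|+2^{-k}$ is small, and the paper in fact uses the tighter input $|\xi(u)-k(u,v)|\lesssim 2^{-k}(|u|+2^{-k})^{\sigma-1}$ from Lemma \ref{lem:setab}, so the $2^{-k}$ factor in \eqref{b:r1} comes from there rather than from absorbing a power of $(|u|+2^{-k})$. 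The same comment applies to your heuristic "each derivative costs at most $2^k$": that alone would give $2^{|\alpha|k}$, one power too many, and the missing $2^{-k}$ is precisely what Lemma \ref{lem:setab} supplies. Neither of these affects the validity of your strategy, but they are the places where the sketch would need to be tightened to actually close the argument.
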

Since the proof of this lemma is not straightforward, we postpone the proof to Section \ref{ssec:error} below. For the moment, we assume that the estimates of the lemma are valid. For simplicity, let $\mathcal E^{[1]} = \mathcal E^{[1]}(u,v)$, $\mathcal E^{[2]} = \mathcal E^{[2]}(u,v)$ denote ``errors", generic terms which may vary depending on the context, satisfying the bounds
\begin{align*}
    \big|\partial_{(u,v)}^\alpha \mathcal E^{[1]}(u,v)\big|&\lesssim 2^{(|\alpha|-1)k}\big(|u|+2^{-k}\big)^\sigma, \\
    \big|\partial_{(u,v)}^\alpha \mathcal E^{[2]}(u,v)\big|&\lesssim 2^{(|\alpha|-1)k}\big(|u|+2^{-k}\big)^{2\sigma-1}
\end{align*}
for $\alpha\in \N_0^2$. By Lemma \ref{lem:error}, $(v-u)^{-1}(R_2+R_3)=\mathcal E^{[1]}$ and $R_1=\mathcal E^{[2]}$.

Using \eqref{i:diff1} and the above lemma, we can express $\xi(u) - k(u,v)$ in terms of $z_{v-u}(\gamma(u),\xi(u))-\gamma(v)$. Note that, up to sign, there is a unique unit vector $\mathrm v = \mathrm v(u)\in \mathbb S^1$ such that $\inp{\mathrm v}{\partial_\xi p(\gamma(u),\xi(u))}=0$, $|\partial_u^m \mathrm v(u)|\le C_m$ with a constant $C_m>0$ for $m\in \N_0$.
By \eqref{b:errxik}, we get 
\begin{align}\label{ineq:diff3}
    \xi(u) - k(u,v) &= \inp{\xi(u) - k(u,v)}{\mathrm v}\mathrm v + \mathcal E^{[1]} =\frac{\langle \xi(u) - k(u,v),\partial_\xi^2 p\mathrm v \rangle}{\inp{\mathrm v}{\partial_\xi^2 p\mathrm v}}\mathrm v+ \mathcal E^{[1]}.
\end{align}
Here, $\partial_\xi^2 p=\partial_\xi^2 p(\gamma(u),\xi(u))$ and
we have used the facts that $|\partial_\xi p(\gamma(u),\xi(u))|$ and $|\inp{\mathrm v}{\partial_\xi^2 p(\gamma(u),\xi(u))\mathrm v}|$ are bounded below, which come from the conditions (A1) and (A2).
From \eqref{i:diff1} and \eqref{b:r2r3}, it follows that
\begin{align*}
    \langle \xi(u) - k(u,v),\partial_\xi^2 p\mathrm v \rangle
     = (v-u)^{-1} \inp{z_{v-u}(&\gamma(u),\xi(u))-\gamma(v)}{\mathrm v}+\mathcal E^{[1]}.
\end{align*}
Finally, combining the above inequality with \eqref{ineq:diff3}, we derive the expression% finally implies that 
\begin{align}\label{ineq:diff4}
    \xi(u) - k(u,v)  =\frac{\langle z_{v-u}(\gamma(u),\xi(u))-\gamma(v),\mathrm v \rangle}{(v-u)\inp{\mathrm v}{\partial_\xi^2 p\mathrm v}}\mathrm v+ \mathcal E^{[1]}.
\end{align}

The advantage of utilizing $z_{v-u}(\gamma(u),\xi(u))-\gamma(v)$ is that, unlike $\xi(u) - k(u,v)$, the function $z_{v-u}(\gamma(u),\xi(u))-\gamma(v)$ is well-defined near the origin. Thus, considering the Taylor expansion at zero, we can approximate $z_{v-u}(\gamma(u),\xi(u))-\gamma(v)$ with a polynomial that remains invariant under the choice of parameters $k,n$. Indeed, using Taylor's theorem, we write
\begin{align*}
    z_{v-u}(\gamma(u),\xi(u))-\gamma(v) = &\sum_{j=0}^{\sigma+1}\Big(\partial_t^{j}z_t(\gamma(u),\xi(u))\Big|_{t=0} - \gamma^{(j)}(u)\Big)\frac{(v-u)^j}{j!}\\
    & + O((v-u)^{\sigma+2}).
\end{align*}
Recall that $\dot\gamma(u) = \partial_\xi p(\gamma(u), \xi(u))$ for every $u$. Thus the $0, 1$-th summands on the right-hand side vanish.
For the rest of the summands, we can verify that the following expressions hold.
\begin{lem}\label{lem:djbegam}
    Let $2\le j\le \sigma+1$ and
    \begin{align*}
    \mathfrak b:= \partial_\xi^2 p(\gamma(0),\xi(0))\Big(\xi^{(\sigma)}(0) + \partial_u^{\sigma-1}(\partial_x p(\gamma(u), \xi(u)))\Big|_{u=0}\Big).
\end{align*}
Then $|\inp{\mathfrak b}{\mathrm v}|\ge c$ with some constant $c>0$. Furthermore,
    \begin{align}\label{i:ztj}
        \partial_t^{j}&z_t(\gamma(u),\xi(u))\Big|_{t=0} - \gamma^{(j)}(u) = \mathfrak b \frac{u^{\sigma+1 - j}}{(\sigma+1-j)!} + O(u^{\sigma+2-j}).
    \end{align}
\end{lem}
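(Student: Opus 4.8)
The plan is to prove \eqref{i:ztj} by an induction on $j$, the inductive mechanism coming from a recursion that ties $g_j(u):=\partial_t^j z_t(\gamma(u),\xi(u))\big|_{t=0}-\gamma^{(j)}(u)$ (the left-hand side of \eqref{i:ztj}) to the defect $\mathbf H(u):=\dot\xi(u)+\partial_x p(\gamma(u),\xi(u))$, which measures the failure of $(\gamma,\xi)$ to be a bicharacteristic. Writing $Z_j(x,\eta):=\partial_t^j z_t(x,\eta)\big|_{t=0}$, the group law of the bicharacteristic flow gives a smooth $Z_j$ with $Z_{j+1}(x,\eta)=DZ_j(x,\eta)\cdot(\partial_\xi p(x,\eta),-\partial_x p(x,\eta))$. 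Differentiating $u\mapsto Z_j(\gamma(u),\xi(u))$ and using $\dot\gamma(u)=\partial_\xi p(\gamma(u),\xi(u))$ to cancel the first slot yields
\[
g_{j+1}(u)=\partial_u g_j(u)-DZ_j(\gamma(u),\xi(u))\cdot\big(0,\mathbf H(u)\big).
\]
The base case $j=2$ is already available: subtracting \eqref{i:gt3} from \eqref{i:gt2} gives $g_2(u)=-\partial_\xi^2 p(\gamma(u),\xi(u))\,\mathbf H(u)$.

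The first genuine task is to identify the order to which $\mathbf H$ vanishes at $u=0$. In the reduced setting of Proposition \ref{prop:main}, \eqref{i:relA2} says $\mathcal G_2^{\gamma,p}=\{u:\mathbf H(u)=0\}$, which is either empty (then $\sigma=1$ and $\mathbf H(0)\neq0$) or $\{0\}$. In the latter case $\mathbf H$ cannot vanish to infinite order at $0$: the argument in the proof of Lemma \ref{lem:finite} shows $\partial_t^n\mathbf H(0)=0$ for all $n$ would force $0\in\mathcal G_\infty^{\gamma,p}$, contradicting $\sigma<\infty$. So $\mathbf H$ vanishes to some finite order $\mu\ge0$ at $0$. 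Feeding $g_2=-\partial_\xi^2 p\,\mathbf H$ and the recursion $g_{j+1}=\partial_u g_j+O(|\mathbf H|)=\partial_u g_j+O(u^\mu)$ into an induction (using that $\partial_\xi^2 p$ is invertible by (A2), so that $g_2$ vanishes to order exactly $\mu$), one finds that $g_j$ vanishes to order exactly $\mu+2-j$ for $2\le j\le\mu+1$ and $g_{\mu+2}(0)\neq0$. Comparing with the defining property of the contact order, $g_j(0)=0\iff j\le\sigma$, forces $\mu=\sigma-1$.

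Granting $\mu=\sigma-1$, \eqref{i:ztj} follows by an upward induction from $j=2$. For $j=2$, Taylor expanding $g_2(u)=-\partial_\xi^2 p(\gamma(u),\xi(u))\mathbf H(u)$ and using $\partial_u^i\mathbf H(0)=0$ for $i<\sigma-1$ gives $g_2(u)=\mathfrak b\,u^{\sigma-1}/(\sigma-1)!+O(u^\sigma)$, the leading coefficient being (up to an overall sign fixed by the base computation) $\partial_\xi^2 p(\gamma(0),\xi(0))\,\partial_u^{\sigma-1}\mathbf H(0)$, which equals $\mathfrak b$ after using $\partial_u^{\sigma-1}\mathbf H(0)=\xi^{(\sigma)}(0)+\partial_u^{\sigma-1}\big(\partial_x p(\gamma(u),\xi(u))\big)\big|_{u=0}$ (differentiate $\mathbf H=\dot\xi+\partial_x p(\gamma,\xi)$). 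For the inductive step, apply $\partial_u$ to the expansion of $g_j$ and add the correction $-DZ_j(\gamma(u),\xi(u))(0,\mathbf H(u))=O(u^{\sigma-1})$; since $\sigma-1\ge\sigma+1-j$ for $j\ge2$, this is absorbed into the error, giving $g_{j+1}(u)=\mathfrak b\,u^{\sigma-j}/(\sigma-j)!+O(u^{\sigma+1-j})$, as required. Finally, $|\langle\mathfrak b,\mathrm v\rangle|\gtrsim1$ follows by differentiating the identity $\langle\partial_\xi p(\gamma(u),\xi(u)),\mathbf H(u)\rangle=0$ (which is \eqref{i:perp1}) $\sigma-1$ times at $u=0$: all but one term vanish because $\partial_u^i\mathbf H(0)=0$ for $i\le\sigma-2$, leaving $\langle\partial_\xi p(\gamma(0),\xi(0)),\partial_u^{\sigma-1}\mathbf H(0)\rangle=0$, so $\partial_u^{\sigma-1}\mathbf H(0)=c_0\mathrm v$ with $c_0\neq0$; then $\langle\mathfrak b,\mathrm v\rangle=\pm c_0\langle\mathrm v,\partial_\xi^2 p(\gamma(0),\xi(0))\mathrm v\rangle$, which is bounded away from $0$ by (A2).

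The main obstacle is the second paragraph: showing that $\mathbf H$ vanishes to order exactly $\sigma-1$. This is the only place that genuinely uses $\sigma<\infty$ together with the reduction $\mathcal G_2^{\gamma,p}\subset\{0\}$, and it requires running the bookkeeping induction carefully — in particular checking that the successive leading coefficients of the $g_j$ do not accidentally vanish. Once this is in place, the rest is routine Taylor expansion driven by the recursion $g_{j+1}=\partial_u g_j-DZ_j(\gamma,\xi)(0,\mathbf H)$.
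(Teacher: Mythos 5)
Your proof is correct and takes a genuinely different, arguably cleaner, route than the paper's. The paper establishes \eqref{i:ztj} by first proving the parallel identities \eqref{i:dmsig} and the transfer formula \eqref{i:mdbsi} through an explicit induction on the order of differentiation, repeatedly differentiating $p(\gamma(u),\xi(u))=0$ and $\dot\gamma(u)=\partial_\xi p(\gamma(u),\xi(u))$, identifying a common polynomial $\mathcal P_{m'}$ in the expansions \eqref{i:gamm'p} and \eqref{i:betam'p}, and invoking (A2) to isolate the leading term; the lemma then falls out of a Taylor expansion in $u$. You instead organize everything around the single recursion $g_{j+1}(u)=\partial_u g_j(u)-\partial_\eta Z_j(\gamma(u),\xi(u))\,\mathbf H(u)$ for the contact defects $g_j(u):=\partial_t^j z_t(\gamma(u),\xi(u))\big|_{t=0}-\gamma^{(j)}(u)$, where $Z_j(x,\eta)=\partial_t^j z_t(x,\eta)\big|_{t=0}$, with base $g_2=-\partial_\xi^2 p\,\mathbf H$, and you determine the exact vanishing order of $\mathbf H$ at $u=0$ by matching the exact vanishing orders of the $g_j$ (forced by the recursion and the invertibility of $\partial_\xi^2 p$) against the defining property of $\sigma$, namely $g_j(0)=0$ for $j\le\sigma$ and $g_{\sigma+1}(0)\neq0$. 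This neatly bypasses the paper's explicit inductive proof that $\partial_u^{m'}\mathbf H(0)=0$ for $m'\le\sigma-2$, and makes the role of $\mathbf H$ as the defect from being a bicharacteristic transparent; (A2) enters only to keep $\partial_\xi^2 p$ invertible, exactly as in the paper, and your derivation of $|\langle\mathfrak b,\mathrm v\rangle|\ge c$ from \eqref{i:perp1} reproduces the paper's argument via \eqref{i:dmxiperp}.

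One minor remark, not a gap: with $\mathfrak b$ defined as in the statement, both your computation and the paper's own derivation (subtracting \eqref{i:betam'p} from \eqref{i:gamm'p} at $m'=\sigma-1$) give the leading term $-\mathfrak b\,u^{\sigma+1-j}/(\sigma+1-j)!$ rather than $+\mathfrak b$. The sign is immaterial downstream, since only $\langle\mathfrak b,\mathrm v\rangle^2$ appears in $\mathcal A(u)$, and you were right to note that the overall sign is fixed by the base computation $g_2=-\partial_\xi^2 p\,\mathbf H$.
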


For the same reason mentioned following Lemma \ref{lem:error}, we postpone the proof of Lemma \ref{lem:djbegam} until the end of the section and assume for the moment its validity.
From Lemma \ref{lem:djbegam}, it follows that
\begin{align}\label{i:diff2}
    z_{v-u}(\gamma(u),\xi(u))-\gamma(v) = P_\sigma(u,v) \mathfrak b + R_4(u,v),
\end{align}
where
\begin{align*}
    P_\sigma(u,v) := \sum_{j=2}^{\sigma+1} \frac{(v-u)^ju^{\sigma+1-j}}{j!(\sigma+1-j)!}
\end{align*}
and  $R_4$ denotes the remainder term expressed as
\begin{align*}
    R_4(u,v) = \sum_{\substack{m_1\ge 2 \\ \sigma+2\le m_1+m_2}} \frac{\mathfrak b_{m_1,m_2}}{m_1!\,m_2!} (u-v)^{m_1}u^{m_2},\quad \mathfrak b_{m_1,m_2}\in \R^2.
\end{align*}
From this, we see that $R_4$ satisfies the bound
\begin{equation}
    \label{ineq:r1}
\big|\partial_{(u,v)}^\alpha R_4(u,v)\big|\lesssim 2^{(|\alpha|-2)k}\big(|u|+2^{-k}\big)^{\sigma},\quad \alpha\in \N_0^2.
\end{equation}
On the other hand, by definition, it is clear that
\begin{align}\label{e:r1}
    \big|\partial_{(u,v)}^\alpha\big(P_\sigma(u,v)\mathfrak b\big)\big|\lesssim 2^{(|\alpha|-2)k}\big(|u|+2^{-k}\big)^{\sigma-1}
\end{align}
for $\alpha\in \N_0^2$.

We can now obtain the expressions we have sought to find in this subsection.
Combining \eqref{ineq:diff4} and \eqref{i:diff2} along with \eqref{ineq:r1}, we get
\begin{align}\label{i:diff5}
    \xi(u) - k(u,v) = (v-u)^{-1}P_\sigma(u,v)\inp{\mathfrak b}{\mathrm v}\big(\inp{\mathrm v}{\partial_\xi^2 p(\gamma(u), \xi(u))\mathrm v}\big)^{-1}\mathrm v + \mathcal E^{[1]}.
\end{align}
Since the derivatives of $\inp{\mathfrak b}{\mathrm v}\big(\inp{\mathrm v}{\partial_\xi^2 p(\gamma(u), \xi(u))\mathrm v}\big)^{-1}$ are uniformly bounded, substituting \eqref{i:diff5} into \eqref{i:2dphi1} and using \eqref{b:r1}, \eqref{e:r1} gives
\begin{align}\label{i:uvphiexp1}
    \partial_u\partial_v\Phi(u, v) = (v-u)^{-1}P_\sigma(u,v)\partial_v\big((v-u)^{-1}P_\sigma(u,v)\big)\mathcal A(u) + \mathcal E^{[2]},
\end{align}
where
\[
\mathcal A(u) = (\inp{\mathfrak b}{\mathrm v})^2\big(\inp{\mathrm v}{\partial_\xi^2 p(\gamma(u), \xi(u))\mathrm v}\big)^{-1}.
\]
Note that
\begin{align}\nonumber
    \big|\mathcal A(u)|\ge C,\quad \big|\partial_u^m \mathcal A(u)\big|\le C_m,\quad m\in \N_0.
\end{align}

Using the binomial theorem, we can express the terms $(v-u)^{-1}P_\sigma(u,v)$ and $\partial_v\big((v-u)^{-1}P_\sigma(u,v)\big)$ in a simpler manner. We first observe that
\[
P_\sigma(u,v) = \frac{1}{(\sigma+1)!}\big(v^{\sigma+1} - (\sigma+1)(v-u)u^\sigma - u^{\sigma+1}\big).
\]
This is an easy consequence of the binomial theorem. Using this, we calculate
\begin{align*}
    \partial_v\big((v-u)^{-1}P_\sigma(u,v)\big) = \frac{1}{(\sigma+1)!(v-u)^2}\big((\sigma+1)(v-u)v^\sigma - v^{\sigma+1} + u^{\sigma+1}\big).
\end{align*}
Now we set
\begin{align*}
    \wp_{\sigma,1}(\tau) &:= \frac{1}{(\sigma+1)!}\big((1+\tau)^{\sigma+1} - \tau^{\sigma+1} - (\sigma+1)\tau^\sigma\big), \\
    \wp_{\sigma,2}(\tau) &:= \frac{1}{(\sigma+1)!}\big(\tau^{\sigma+1} + (\sigma+1)(1+\tau)^\sigma - (1+\tau)^{\sigma+1}\big).
\end{align*}
Then from the above calculation, it follows that
\begin{align}
\begin{aligned}\nonumber
    (v-u)^{-1}P_\sigma(u,v) &= (v-u)^{\sigma}\wp_{\sigma,1}(u(v-u)^{-1}),\\ \partial_v\big((v-u)^{-1}P_\sigma(u,v)\big) &= (v-u)^{\sigma-1}\wp_{\sigma,2}(u(v-u)^{-1}).
\end{aligned}
\end{align}
Thus the expression \eqref{i:uvphiexp1} is rephrased as
\begin{align}\label{i:uvphiexp2}
    \partial_u\partial_v\Phi(u, v) = (v-u)^{2\sigma-1}\wp_{\sigma,1}(u(v-u)^{-1})\wp_{\sigma,2}(u(v-u)^{-1})\mathcal A(u) + \mathcal E^{[2]}.
\end{align}
Also, we observe that the polynomials $\wp_{\sigma,1}(\tau)$, $\wp_{\sigma,2}(\tau)$ satisfy the relation
\begin{align}\label{i:wprel}
    \wp_{\sigma,1}(\tau) = (-1)^{\sigma+1} \wp_{\sigma, 2}(-\tau-1),\quad  \wp_{\sigma, i}'(\tau) = \wp_{\sigma-1,i}(\tau)
\end{align}
for $i=1,2$.

\subsection{Proof of Proposition \ref{prop:phibound}} In this subsection, we prove Proposition \ref{prop:phibound} by assuming that Lemma \ref{lem:error}, \ref{lem:djbegam} are valid. 

We first note that \textit{i)} in Proposition \ref{prop:phibound} is an immediate consequence of the above discussion. Indeed, by the mean value theorem, we only need to show
\begin{align}\label{e:bdphase}
    \big|\partial_{(u,v)}^{\alpha}\partial_u\partial_v\Phi(u,v)\big|\lesssim 2^{(|\alpha|-1)k}\big(2^{-k} + 2^{-k}|n|\big)^{2\sigma-2},\quad \alpha\in \N_0^2.
\end{align}
However, the above estimate directly follows from \eqref{e:r1}, \eqref{i:uvphiexp1}, and $|u|\lesssim 2^{-k}|n|$.

To establish \textit{ii)} in Proposition \ref{prop:phibound}, we consider several cases separately.

\subsubsection*{When $|n|\gg 1$} We note that $\wp_{\sigma,1}$, $\wp_{\sigma,2}$ are polynomials of degree $\sigma -1$. Hence, if $|n|$ is sufficiently large,
\[
\big|\wp_{\sigma,1}(u(v-u)^{-1})\big|,\ \big|\wp_{\sigma,2}(u(v-u)^{-1})\big|\gtrsim \big|u(v-u)^{-1}\big|^{\sigma-1}
\]
because $|u(v-u)^{-1}|\gg 1$. Since $|v-u|\sim 2^{-k}$ and $|u|\sim |n|2^{-k}$, the above inequalities and \eqref{i:uvphiexp2} imply
\begin{align}\label{e:lbphiuvc1}
    \big|\partial_u\partial_v \Phi(u,v)\big|\gtrsim 2^{-(2\sigma-1)k}|n|^{2\sigma-2}.
\end{align}
We discard the error term $\mathcal E^{[2]}$ by using $|u|+2^{-k}\lesssim \eps$. 
Combining \eqref{e:bdphase}, \eqref{e:lbphiuvc1} together with Taylor's theorem gives
\begin{align}\nonumber
        \big|\partial_u(\Phi(u,w) - \Phi(u,v))\big|&\ge \big|\partial_u\partial_v\Phi(u,v)\big||v-w| - \Big(\sup_{(u,v)\in \mathcal A_{l,n}}\big|\partial_u\partial_v^2\Phi(u,v)\big|\Big)|v-w|^2 \\\label{e:lbphasec1}
        &\gtrsim 2^{-(2\sigma-1)k}|n|^{2\sigma-2}|v-w|.
\end{align}
In the second line, we also use that $|v-w|\lesssim \eps 2^{-k}$.

Now we suppose that $|n|\lesssim 1$. To handle the case, it is necessary to make use of the following observation.

\begin{lem}\label{lem:roots}
    Let $\sigma\ge 1$. For the polynomials $\wp_{\sigma,1}$, $\wp_{\sigma,2}$, we have
    \begin{enumerate}[topsep = -3pt, itemsep = 2pt]
        \item [(i)] If $\sigma$ is odd, then the polynomials $\wp_{\sigma,1}$, $\wp_{\sigma,2}$ have no roots on $\R$.
        \item [(ii)] If $\sigma$ is even, then the polynomials $\wp_{\sigma,1}$, $\wp_{\sigma,2}$ have exactly one root on $\R$. Furthermore, let $\tau_1$, $\tau_2$ be such a root of $\wp_{\sigma,1}$, $\wp_{\sigma,2}$, respectively. Then
        \begin{align}\label{e:t2<t1}
            -1 < \tau_2 < -1/2 < \tau_1 < 0.
        \end{align}
    \end{enumerate}
\end{lem}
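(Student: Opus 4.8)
## Proof Proposal for Lemma \ref{lem:roots}

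\textbf{Setup.} Recall the explicit formulas
\[
\wp_{\sigma,1}(\tau) = \frac{1}{(\sigma+1)!}\big((1+\tau)^{\sigma+1} - \tau^{\sigma+1} - (\sigma+1)\tau^\sigma\big),\qquad
\wp_{\sigma,2}(\tau) = \frac{1}{(\sigma+1)!}\big(\tau^{\sigma+1} + (\sigma+1)(1+\tau)^\sigma - (1+\tau)^{\sigma+1}\big).
\]
By the symmetry relation \eqref{i:wprel}, namely $\wp_{\sigma,1}(\tau) = (-1)^{\sigma+1}\wp_{\sigma,2}(-\tau-1)$, the root structure of $\wp_{\sigma,2}$ is obtained from that of $\wp_{\sigma,1}$ by the reflection $\tau \mapsto -\tau - 1$ about $\tau = -1/2$. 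So it suffices to analyze $\wp_{\sigma,1}$ and then transport the conclusions: in particular a root $\tau_1$ of $\wp_{\sigma,1}$ corresponds to the root $\tau_2 = -\tau_1 - 1$ of $\wp_{\sigma,2}$, and the two-sided bound \eqref{e:t2<t1} reduces to showing $-1/2 < \tau_1 < 0$ for the (unique) real root of $\wp_{\sigma,1}$ when $\sigma$ is even.

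\textbf{Main idea: induct via the derivative relation $\wp_{\sigma,i}' = \wp_{\sigma-1,i}$.} This is the second identity in \eqref{i:wprel}, and it is the engine of the proof. I would argue by induction on $\sigma$. The base cases $\sigma = 1$ (where $\wp_{1,1}(\tau) = 1$, a nonzero constant, hence no real roots — consistent with (i)) and $\sigma = 2$ (where $\wp_{2,1}(\tau) = \tfrac16(3\tau + 1)\cdot(\text{something})$; compute directly: $\wp_{2,1}(\tau) = \tfrac16((1+\tau)^3 - \tau^3 - 3\tau^2) = \tfrac16(1 + 3\tau) = \tfrac12\tau + \tfrac16$, so the unique root is $\tau_1 = -1/3 \in (-1/2, 0)$) are checked by hand. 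For the inductive step, suppose the claim holds for $\sigma - 1$.

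\emph{Case $\sigma$ odd (so $\sigma - 1$ even):} By the inductive hypothesis $\wp_{\sigma-1,1} = \wp_{\sigma,1}'$ has exactly one real root $\tau_* \in (-1/2, 0)$, and it is a simple root, so $\wp_{\sigma,1}$ is strictly monotone on each side — it decreases then increases (or vice versa), attaining a unique extremum at $\tau_*$. Thus $\wp_{\sigma,1}$ has no real root if and only if its extreme value $\wp_{\sigma,1}(\tau_*)$ has the appropriate sign (is bounded away from zero on the correct side). To pin this down I would evaluate $\wp_{\sigma,1}$ at a convenient point and track signs; a clean choice is to note that $(\sigma+1)!\,\wp_{\sigma,1}(\tau) \to +\infty$ as $\tau \to \pm\infty$ when $\sigma$ is odd (leading term $(\sigma+1)\tau^\sigma \cdot$... wait — more carefully, the top-degree terms $(1+\tau)^{\sigma+1} - \tau^{\sigma+1}$ cancel at order $\tau^{\sigma+1}$, leaving leading term $(\sigma+1)\tau^\sigma - (\sigma+1)\tau^\sigma = 0$ at order $\sigma$ as well; the genuine leading term is order $\tau^{\sigma-1}$ with positive coefficient $\binom{\sigma+1}{2}/(\sigma+1)! > 0$, so indeed $\wp_{\sigma,1}$ is a degree-$(\sigma-1)$ polynomial with positive leading coefficient). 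Hence for $\sigma$ odd, $\sigma - 1$ is even, $\wp_{\sigma,1} \to +\infty$ at both ends; having a single critical point $\tau_*$, it is positive everywhere iff $\wp_{\sigma,1}(\tau_*) > 0$. I would verify $\wp_{\sigma,1}(\tau_*) > 0$ by a direct estimate — e.g. using that $\tau_* \in (-1/2, 0)$ and plugging the defining relation $\wp_{\sigma-1,1}(\tau_*) = 0$ (i.e. $(1+\tau_*)^\sigma - \tau_*^\sigma - \sigma\tau_*^{\sigma-1} = 0$) into the formula for $\wp_{\sigma,1}(\tau_*)$ to get a manifestly positive expression. This last computation, isolating a clean sign, is where I expect the only real friction.

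\emph{Case $\sigma$ even (so $\sigma - 1$ odd):} By the inductive hypothesis $\wp_{\sigma,1}' = \wp_{\sigma-1,1}$ has no real root and (being a degree-$(\sigma-2)$ polynomial with positive leading coefficient, $\sigma - 2$ even) is therefore strictly positive on $\R$. Hence $\wp_{\sigma,1}$ is strictly increasing on $\R$; being a polynomial of odd degree $\sigma - 1$ with positive leading coefficient, it runs from $-\infty$ to $+\infty$, so it has exactly one real root $\tau_1$, and it is simple. It remains to locate $\tau_1 \in (-1/2, 0)$. By monotonicity this is equivalent to the sign conditions $\wp_{\sigma,1}(-1/2) > 0$ and $\wp_{\sigma,1}(0) < 0$. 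The value at $0$: $(\sigma+1)!\,\wp_{\sigma,1}(0) = 1 - 0 - 0 = 1 > 0$ — wait, that gives $\wp_{\sigma,1}(0) > 0$, so in fact $\tau_1 < 0$ follows and I should instead check the left endpoint; let me reconsider: $\wp_{\sigma,1}(0) = 1/(\sigma+1)! > 0$ forces $\tau_1 < 0$ by increasingness. For the lower bound $\tau_1 > -1/2$: evaluate $(\sigma+1)!\,\wp_{\sigma,1}(-1/2) = 2^{-\sigma-1} - (-1/2)^{\sigma+1} - (\sigma+1)(-1/2)^\sigma = 2^{-\sigma-1} + 2^{-\sigma-1} - (\sigma+1)2^{-\sigma} = 2^{-\sigma} - (\sigma+1)2^{-\sigma} = -\sigma 2^{-\sigma} < 0$, using $\sigma$ even so $(-1/2)^{\sigma+1} = -2^{-\sigma-1}$ and $(-1/2)^\sigma = 2^{-\sigma}$. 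Since $\wp_{\sigma,1}$ is increasing with $\wp_{\sigma,1}(-1/2) < 0 < \wp_{\sigma,1}(0)$, the unique root satisfies $-1/2 < \tau_1 < 0$, as claimed. Finally, translate via $\tau_2 = -\tau_1 - 1$ to get $-1 < \tau_2 < -1/2$, yielding \eqref{e:t2<t1}.

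\textbf{Expected main obstacle.} The structural part — using $\wp_{\sigma,i}' = \wp_{\sigma-1,i}$ to induct between the odd and even cases and read off monotonicity — is clean. The delicate point is the odd-$\sigma$ case: one must show the lone critical value $\wp_{\sigma,1}(\tau_*)$ has a definite sign, which requires exploiting the relation defining $\tau_*$ rather than a bare endpoint evaluation. I would handle this by substituting $(1+\tau_*)^\sigma = \tau_*^\sigma + \sigma\tau_*^{\sigma-1}$ into $\wp_{\sigma,1}(\tau_*)$; after multiplying through by $(1+\tau_*)$ or $\tau_*$ as convenient, the expression should collapse to something like a positive multiple of a power of $\tau_*$ (or of $1 + \tau_*$), both of which keep a fixed sign on $(-1/2,0)$. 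The even cases, by contrast, reduce entirely to the two explicit endpoint evaluations computed above, so no obstacle there.
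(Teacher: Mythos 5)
Your approach is essentially the paper's: reduce to $\wp_{\sigma,1}$ via the symmetry in \eqref{i:wprel}, use the derivative relation $\wp_{\sigma,1}'=\wp_{\sigma-1,1}$, evaluate at $\tau=-1/2$ and $\tau=0$ for the even case, and for the odd case show the critical value is positive by substituting the critical-point relation back into $\wp_{\sigma,1}$. The one step you leave as ``I would verify this by a direct estimate'' is precisely the paper's identity \eqref{i:wp12}: substituting $(1+\tau_*)^\sigma = \tau_*^\sigma + \sigma\tau_*^{\sigma-1}$ into $(\sigma+1)!\,\wp_{\sigma,1}(\tau_*)=(1+\tau_*)^{\sigma+1}-\tau_*^{\sigma+1}-(\sigma+1)\tau_*^\sigma$ and expanding $(1+\tau_*)^{\sigma+1}=(1+\tau_*)\bigl(\tau_*^\sigma+\sigma\tau_*^{\sigma-1}\bigr)$ collapses everything to $\wp_{\sigma,1}(\tau_*)=\sigma\bigl((\sigma+1)!\bigr)^{-1}\tau_*^{\sigma-1}$, which for $\sigma$ odd is strictly positive because $\sigma-1$ is even and $\tau_*\neq 0$ (the latter since $\wp_{\sigma,1}(0)=1/(\sigma+1)!\neq 0$). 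You should carry out this three-line computation rather than flag it as expected friction, since it is the crux of the whole lemma. The genuine structural difference is your induction on $\sigma$: the paper argues directly, taking an \emph{arbitrary} root $\tau_*$ of $\wp_{\sigma,1}'$ and showing every critical value is positive when $\sigma$ is odd, so it needs no control on the number of critical points; you instead alternate between the odd and even cases. Your framing does buy something real in part (ii): strict positivity of $\wp_{\sigma,1}'=\wp_{\sigma-1,1}$ (from the odd case) makes $\wp_{\sigma,1}$ strictly increasing, so the uniqueness of its real root is immediate by monotonicity, whereas the paper is quite terse on this point, asserting only that \eqref{i:wp12} ``also verifies the first statement of (ii).'' Your monotonicity argument makes that uniqueness transparent, and the endpoint evaluations $(\sigma+1)!\,\wp_{\sigma,1}(0)=1>0$ and $(\sigma+1)!\,\wp_{\sigma,1}(-1/2)=-\sigma 2^{-\sigma}<0$ (after your self-correction of the initial sign slip) agree exactly with the paper's.
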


\begin{proof}
    The lemma is a consequence of an elementary calculation. For $(i)$, by 
 \eqref{i:wprel}, it suffices to check that $\wp_{\sigma,1}(\tau)\ge C$ for every $\tau\in \R$ with a constant $C>0$.  We compute
    \begin{align*}
        \wp_{\sigma,1}'(\tau) &= \frac{1}{\sigma!}\big((1+\tau)^\sigma - \tau^\sigma - \sigma\tau^{\sigma-1}\big).
    \end{align*}
This polynomial is of degree $\sigma-2$, thus it has at least one root on $\R$ if $\sigma$ is odd. Let $\tau_1\in\R$ be arbitrary root of $\wp'_{\sigma,1}$. By a calculation, it follows that
\begin{align}\label{i:wp12}
    \wp_{\sigma,1}(\tau_1) = \sigma \big((\sigma+1)!\big)^{-1} \tau_1^{\sigma-1}.
\end{align}
Since $\wp_{\sigma,1}(0)=\big((\sigma+1)!\big)^{-1}>0$, $\tau_1\neq 0$. Thus $\wp_{\sigma,1}(\tau_1)>0$ if $\sigma-1$ is even. This means that there exists a constant $C>0$ such that $\wp_{\sigma,1}(\tau)\ge C$ for all $\tau\in \R$ if $\sigma$ is odd. Moreover, it also verifies the first statement of $(ii)$.
%To show $(ii)$, combining $(i)$ with \eqref{i:wprel} entails that $\wp_{\sigma,1}$, $\wp_{\sigma,2}$ have exactly one root on $\R$.
Thus it remains to show \eqref{e:t2<t1}. %by \eqref{i:wprel} it is sufficient to check that $-1/2 < \tau_1 < 0$. 
A straightforward calculation shows that 
\[
\wp_{\sigma,1}(-1/2) = -\sigma \big((\sigma+1)!\big)^{-1} (1/2)^\sigma <0,\quad \wp_{\sigma,1}(0) = \big((\sigma+1)!\big)^{-1}>0,
\]
yielding $-1/2 < \tau_1 < 0$. By \eqref{i:wprel}, it gives \eqref{e:t2<t1}.
%From this, the desired property of $\tau_1$ immediately follows.
\end{proof}

\subsubsection*{When $|n|\lesssim 1$ and $\sigma$ is odd}
By Lemma \ref{lem:roots},
\[
\big|\wp_{\sigma,1}(u(v-u)^{-1})\big|,\ \big|\wp_{\sigma,2}(u(v-u)^{-1})\big|\ge C
\]
with a constant $C>0$. Following the same argument as in the case $|n|\gg 1$, we deduce
\begin{align*}
    \begin{aligned}
        \big|\partial_u( \Phi(u,v) -  \Phi(u,w))\big|&\gtrsim 2^{-(2\sigma-1)k}|v-w|.
    \end{aligned}
\end{align*}
Combined with \eqref{e:lbphasec1}, it completes the proof of \eqref{ineq:lophase1}.

\begin{figure}
    \centering
    \begin{tikzpicture}[use Hobby shortcut, scale=0.8] 
\draw (-3,2) .. (-1.2,-0.2) .. (0,-0.5) .. (1.2,-0.2) .. (3,2); 
\draw (-3.5,0.3) .. (-1.2,-0.2) .. (1.5,-2.5);
\node[scale=1, left] at (-3,2) {$\gamma$};
\node[scale=1, above] at (-3.5,0.3) {$z_{v-u}$};
\node[scale=0.9, above] at (-0.8,-0.3) {$\gamma(u)$};
\end{tikzpicture}
\qquad
\begin{tikzpicture}[use Hobby shortcut, scale=0.8] 
\draw (-3,2) .. (-1.2,-0.2) .. (0,-0.5) .. (1.2,-0.8) .. (3,-3); 
\draw (-3.5,0) .. (-1.2,-0.23) .. (3.3,-3);
\node[scale=1, left] at (-3,2) {$\gamma$};
\node[scale=1, above] at (-3.5,0) {$z_{v-u}$};
\node[scale=0.9, above] at (-0.8,-0.3) {$\gamma(u)$};
\end{tikzpicture}
\caption{The graphs of $v\mapsto \gamma(v)$ and $z_{v-u}(\gamma(u), \xi(u))$ when $\sigma$ is odd (left) and even (right). Note that $z_{v-u}(\gamma(u), \xi(u))$ intersects with $\gamma$ at a point other than $\gamma(u)$ on the right figure.}
\label{fig:casesigma}
\end{figure}
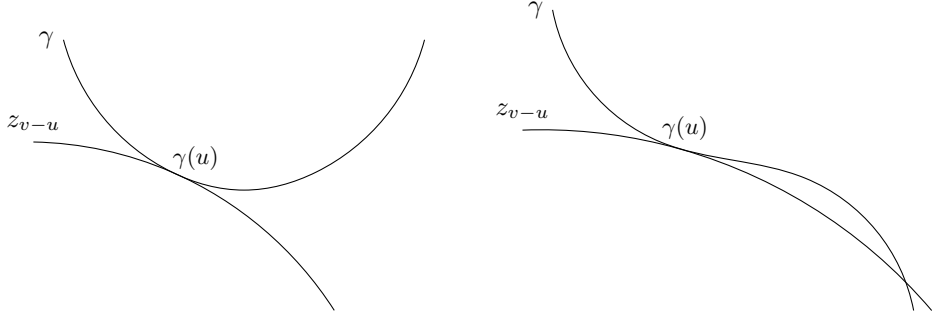

\subsubsection*{When $|n|\lesssim 1$ and $\sigma$ is even} In this case, Lemma \ref{lem:roots} tells us that either $\wp_{\sigma,1}(u(v-u)^{-1})$ or $\wp_{\sigma,2}(u(v-u)^{-1})$ may become zero. It implies that $\partial_u\partial_v \Phi(u,v)$ could be zero at some points $(u,v)\in \supp(B_{k,l}^n)$. To understand why this phenomenon occurs, we recall \eqref{i:uvphiexp1} and \eqref{i:diff2}. These expressions suggest that whether $\partial_u\partial_v \Phi(u,v)$ vanishes on $\supp(B_{k,l}^n)$ comes from whether $z_{v-u}(\gamma(u),\xi(u))$ intersects with $\gamma(v)$ at some $(u,v)\in \supp(B_{k,l}^n)$, if we disregard the contribution from the error. The intersection of the two curves $z_{v-u}(\gamma(u),\xi(u))$ and $\gamma(v)$ is illustrated in Figure \ref{fig:casesigma}. As clear from Figure \ref{fig:casesigma}, these two curves have two intersections if $\sigma$ is even.

However, one can show that the third-order derivative $\partial_u^2\partial_v\Phi(u,v)$ does not become zero around the zero set of $\partial_u\partial_v\Phi(u,v)$. This can be seen by observing that the functions $\wp_{\sigma,i}(u(v-u)^{-1})$, $\partial_u\wp_{\sigma,i}(u(v-u)^{-1})$ does not simultaneously vanish at the same point for $i = 1,2$. Indeed, let us set
\begin{align*}
    \wt{\wp}_{\sigma,i}(\tau) := (1+\tau) \wp_{\sigma-1,i}(\tau),\quad i=1,2.
\end{align*}
Since $\wp_{\sigma,i}'(\tau) = \wp_{\sigma-1,i}(\tau)$, by a calculation we have
\begin{align*}
    \partial_u\big(\wp_{\sigma,i}(u(v-u)^{-1})\big) = (v-u)^{-1}\wt{\wp}_{\sigma,i}(u(v-u)^{-1}),\quad i=1,2.
\end{align*}
Lemma \ref{lem:roots} implies that both polynomials $\wt{\wp}_{\sigma,1}(\tau)$, $\wt{\wp}_{\sigma,2}(\tau)$ vanish only when $\tau = -1$. Thus, by \eqref{e:t2<t1}, $\wt{\wp}_{\sigma,1}(\tau_1)$, $\wt{\wp}_{\sigma,2}(\tau_2)\neq 0$.
This justifies the above observation.

Therefore, it follows that
\begin{align*}
    \big|\wp_{\sigma,1}(u(v-u)^{-1})&\wp_{\sigma,2}(u(v-u)^{-1})\big| \\
    &+ \big|(v-u)\partial_u\big(\wp_{\sigma,1}(u(v-u)^{-1})\wp_{\sigma,2}(u(v-u)^{-1})\big)\big|\ge C
\end{align*}
for a constant $C>0$. By the chain rule, the above bound implies
\begin{align*}
    \big|(v&-u)^{2\sigma-1}\wp_{\sigma,1}(u(v-u)^{-1})\wp_{\sigma,2}(u(v-u)^{-1})\mathcal A(u)\big| \\
    &+ 2^{-k}\big|\partial_u\big((v-u)^{2\sigma-1}\wp_{\sigma,1}(u(v-u)^{-1})\wp_{\sigma,2}(u(v-u)^{-1})\mathcal A(u)\big)\big|\ge C2^{-(2\sigma-1)k}.
\end{align*}
Combining with \eqref{i:uvphiexp2} shows \eqref{ineq:lophase2}. This completes the proof of \textit{ii)} in Proposition \ref{prop:phibound}.

\subsection{Estimates for error terms}\label{ssec:error}
In this subsection, we prove Lemma \ref{lem:error}. 
We start with obtaining the required bounds for $R_2$, $R_3$.
    By the Taylor theorem, these terms are expressed by
\begin{align*}
    R_2(u,v) & = \sum_{\mathfrak m\in \N_0^3 : |\mathfrak m|=2} \mathcal R_{2,\mathfrak m}(u,v) (v-u+s(u,v), \xi(u)-k(u,v))^\mathfrak m, \\
    R_3(u,v) & = \mathcal R_{3,1}(u,v) \big(v-u+s(u,v)\big) (v-u) + \mathcal R_{3,2}(u,v)\big(\xi(u) - k(u,v)\big)(v-u)^2,
\end{align*}
where
\begin{align*}
    \mathcal R_{2,\mathfrak m}(u,v) &:= -\frac 12\int_0^1 \partial_{(t,\xi)}^\mathfrak m z_t(\gamma(u), \xi)\bigg|_{t= t_{u,v}(\tau), \xi = \xi_{u,v}(\tau)} (1-\tau) d\tau, \\
    \mathcal R_{3,1}(u,v) &:= \int_0^1 \partial_t^2 z_t(\gamma(u), \xi(u))\bigg|_{t= \tau (v-u)} d\tau, \\
    \mathcal R_{3,2}(u,v) &:= \frac 12\int_0^1 \partial_t^2 \partial_\xi z_t(\gamma(u), \xi(u))\bigg|_{t= \tau (v-u)} (1-\tau) d\tau,
\end{align*}
and $t_{u,v}(\tau) := -\tau s(u,v)+(1-\tau)(v-u)$. Also, recall that $\xi_{u,v}(\tau) = \tau k(u,v)+(1-\tau)\xi(u)$.
Since derivatives of $z_t(\gamma(u), \xi(u))$ in $t, u$ are uniformly bounded, we have
\begin{align}\label{e:mcr3}
    \big|\partial_{(u,v)}^\beta \mathcal R_{3,1}(u,v)\big|, \ \big|\partial_{(u,v)}^\beta \mathcal R_{3,2}(u,v)\big|\le C_\beta
\end{align}
with a constant $C_\beta>0$ for $\beta\in \N_0^2$.
Also, using the bounds for the derivatives of $s(u,v)$, $\eta(u,v)$ in Lemma \ref{lem:seta}, we obtain
\begin{align}\label{e:bdonk}
    \big|\partial_{(u,v)}^\beta k(u,v)\big|\le C_\beta 2^{|\beta|k}, \quad \beta\in \N_0^2,%\ (u,v)\in \mathcal A_{l,n},
\end{align}
which implies that, for every $\beta\in\N_0^2$,
\begin{align}\label{e:bdr21}
    \big|\partial_{(u,v)}^\beta \mathcal R_{2,\mathfrak m} (u,v)\big|\le C_\alpha 2^{|\beta|k}, \quad \mathfrak m\in \N_0^3\ \ \text{such that}\ \  |\mathfrak m|=2,\ \mathfrak m_1\ge 1.
\end{align}
If $\mathfrak m_1 = 0$, $\partial_{(t,\xi)}^\mathfrak m z_t(\gamma(u),\xi) = \partial_{\xi}^{\mathfrak m'} z_t(\gamma(u),\xi)$ where $\mathfrak m = (\mathfrak m_1, \mathfrak m')$. From the Taylor theorem, it follows that
\begin{align*}
    \partial_{\xi}^{\mathfrak m'} z_{t_{u,v}(\tau)}(\gamma(u),\xi)\Big|_{\xi = \xi_{u,v}(\tau)} =  \int_0^{t_{u,v}(\tau)} \partial_\xi^{\mathfrak m'}\big(\partial_\xi p(\kappa_{\tilde\tau}(\gamma(u), \xi))\big)\Big|_{\xi = \xi_{u,v}(\tau)} d\tilde\tau.
\end{align*}
We have also used $\partial_t z_t = \partial_\xi p(\kappa_t)$. Since $|t_{u,v}(\tau)|\lesssim 2^{-k}$, this implies that, for $(u,v)\in \mathcal A_{l,n}$ and $\beta\in\N_0^2$,
\begin{align}\label{e:bdr22}
    \big|\partial_{(u,v)}^\beta \mathcal R_{2,\mathfrak m} (u,v)\big|\le C_\beta 2^{(|\beta|-1)k}, \quad \mathfrak m\in \N_0^3\ \ \text{such that}\ \  |\mathfrak m|=2,\ \mathfrak m_1 = 0.
\end{align}

By \eqref{e:mcr3}--\eqref{e:bdr22}, the proof comes down to estimating the derivatives of $v-u-s(u,v)$ and $\xi(u) - k(u,v)$. As mentioned earlier, utilizing \eqref{r:dseta} directly results in obtaining crude estimates compared to the desired result. It turns out that one can refine \eqref{r:dseta} to a significant extent if $\sigma$ is large. Let
\[
\mathfrak d^\alpha = \mathfrak d^\alpha(u,v):=\big|\partial_{(u,v)}^\alpha\big(v-u+s(u,v)\big)\big| + 2^{-k} \big|\partial_{(u,v)}^\alpha\big(\xi(u) - k(u,v)\big)\big|.
\]

\begin{lem}\label{lem:setab}
    Let $(u,v)\in \mathcal A_{l,n}$. Then for every $\alpha\in \N_0^2$, we have
    \begin{align}
        \begin{aligned}\label{e:setab}
            \mathfrak d^\alpha(u,v)&\le C_\alpha 2^{(|\alpha|-2)k}\big(|u|+2^{-k}\big)^{\sigma-1}.
        \end{aligned}
    \end{align}
    %where $\mathfrak d^\alpha = \mathfrak d^\alpha(u,v):=\big|\partial_{(u,v)}^\alpha\big(v-u+s(u,v)\big)\big| + 2^{-k} \big|\partial_{(u,v)}^\alpha\big(\xi(u) - k(u,v)\big)\big|$.
\end{lem}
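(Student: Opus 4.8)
\textit{Plan of proof.} The idea is to trade the implicit characterization of $(s(u,v),\eta(u,v))$ in Lemma~\ref{lem:seta} for a purely geometric one, and then exploit a degeneracy in the Jacobian of an associated map. First I would record the exact relations. Writing $k(u,v) = \partial_x\phi(s(u,v),\gamma(u),\eta(u,v))$, the two components of \eqref{i:seta0}, combined with \eqref{i:phisol} and \eqref{i:relphi}, give
\begin{align*}
    p(\gamma(u),k(u,v)) = 0, \qquad z_{-s(u,v)}(\gamma(u),k(u,v)) = \gamma(v),
\end{align*}
which is the relation already used to produce \eqref{i:relk}. Using (A1), the set $\{k : p(\gamma(u),k) = 0\}$ is a smooth curve near $\xi(u)$; parametrize it by a smooth family $k = \mathcal K_u(r)$ with $\mathcal K_u(0) = \xi(u)$ and $\mathcal K_u'(0)$ parallel to $\mathrm v(u)$ (the unit vector $\perp\partial_\xi p(\gamma(u),\xi(u))$), and let $r(u,v)$ be defined by $k(u,v) = \mathcal K_u(r(u,v))$. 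Since $\mathcal K_u$ is smooth with $|\mathcal K_u'|\gtrsim 1$, the derivatives of $\xi(u) - k(u,v) = \mathcal K_u(0) - \mathcal K_u(r(u,v))$ and those of $r(u,v)$ control one another up to lower-order nonlinear corrections, so \eqref{e:setab} reduces to bounding $\partial_{(u,v)}^\alpha\big(v-u+s(u,v)\big)$ and $\partial_{(u,v)}^\alpha r(u,v)$ by $2^{(|\alpha|-2)k}(|u|+2^{-k})^{\sigma-1}$ and $2^{(|\alpha|-1)k}(|u|+2^{-k})^{\sigma-1}$, respectively.

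The heart of the matter is the map $\Lambda_u(s,r) := z_{-s}(\gamma(u),\mathcal K_u(r))$. It satisfies the two identities
\begin{align*}
    \Lambda_u(-(v-u),0) = z_{v-u}(\gamma(u),\xi(u)), \qquad \Lambda_u(s(u,v),r(u,v)) = \gamma(v).
\end{align*}
On $\mathcal A_{l,n}$ one has $|u-v|\sim 2^{-k}$ and, by \eqref{r:dseta}, $|s(u,v)|\sim 2^{-k}$; in this range,
\begin{align*}
    \partial_s\Lambda_u(s,r) = -\partial_\xi p\big(\kappa_{-s}(\gamma(u),\mathcal K_u(r))\big), \qquad \partial_r\Lambda_u(s,r) = \partial_\xi z_{-s}(\gamma(u),\mathcal K_u(r))\,\mathcal K_u'(r),
\end{align*}
so that $|\partial_s\Lambda_u|\gtrsim 1$ by (A1), while the expansion $\partial_\xi z_{-s} = -s\,\partial_\xi^2 p + O(s^2)$ together with \eqref{ineq:vMv} yields $|\partial_r\Lambda_u|\lesssim 2^{-k}$ and $|\langle\partial_r\Lambda_u,\mathrm v\rangle|\gtrsim 2^{-k}$. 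Hence the $2\times2$ matrix $[\,\partial_s\Lambda_u\mid\partial_r\Lambda_u\,]$ is invertible with determinant of size $\sim 2^{-k}$, and inverting it produces the first coordinate of the preimage with \emph{no} loss and the second coordinate with exactly one factor of $2^k$. Subtracting the two displayed identities and writing the difference as an integral of $D_{(s,r)}\Lambda_u$ along the segment from $(-(v-u),0)$ to $(s(u,v),r(u,v))$ (on which the $s$-coordinate stays $\sim 2^{-k}$, so the above structure persists) gives
\begin{align*}
    \big(v-u+s(u,v),\ r(u,v)\big) = -\Big(\int_0^1 D_{(s,r)}\Lambda_u\big(\cdots\big)\,d\tau\Big)^{-1}\big(\gamma(v) - z_{v-u}(\gamma(u),\xi(u))\big).
\end{align*}
Since $|\gamma(v) - z_{v-u}(\gamma(u),\xi(u))| \lesssim 2^{-2k}(|u|+2^{-k})^{\sigma-1}$ by \eqref{i:diff2}, \eqref{e:r1} and \eqref{ineq:r1}, the structure of the inverse gives $|v-u+s(u,v)| \lesssim 2^{-2k}(|u|+2^{-k})^{\sigma-1}$ and $|r(u,v)| \lesssim 2^{-k}(|u|+2^{-k})^{\sigma-1}$, which is \eqref{e:setab} for $\alpha = 0$.

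For $|\alpha|\ge 1$ I would induct on $|\alpha|$, differentiating the identity $\Lambda_u(s(u,v),r(u,v)) = \gamma(v)$ together with its reference counterpart $\Lambda_u(-(v-u),0) = z_{v-u}(\gamma(u),\xi(u))$. The terms carrying $\partial_{(u,v)}^\alpha(s,r)$ reassemble into $D_{(s,r)}\Lambda_u\cdot\partial_{(u,v)}^\alpha\big(v-u+s,\ r\big)$, and the remainder consists of (i) $\partial_{(u,v)}^\alpha\big(\gamma(v) - z_{v-u}(\gamma(u),\xi(u))\big)$, which by \eqref{i:diff2}, \eqref{e:r1}, \eqref{ineq:r1} is $\lesssim 2^{(|\alpha|-2)k}(|u|+2^{-k})^{\sigma-1}$, and (ii) products of strictly lower-order $\partial_{(u,v)}^{\beta}\big(v-u+s,\ r\big)$ with derivatives of $\Lambda_u$, $\mathcal K_u$, $\gamma$ (the latter controlled crudely by \eqref{b:dseta}); by the inductive hypothesis \eqref{e:setab} these products carry an extra factor $(|u|+2^{-k})^{\sigma-1}\lesssim\varepsilon^{\sigma-1}$ compared with the target. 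Inverting $D_{(s,r)}\Lambda_u$ with the structure described above, and absorbing into the left-hand side the single lower-order contribution that comes multiplied by an $O(2^{-k})$ coefficient (produced by $\partial_r\Lambda_u = O(2^{-k})$; legitimate since $2^{-k}\le\varepsilon$ is small), closes the induction and yields \eqref{e:setab} in general.

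I expect the main obstacle to be the combinatorial bookkeeping in the inductive step: after applying Leibniz's rule and the chain rule to the composition $\Lambda_u(s(u,v),r(u,v))$, one must check that every term is either of type (i) or of type (ii) above, carrying the correct powers of $2^k$ and of $(|u|+2^{-k})$. In particular one has to keep the factor $v-u$ in the first slot grouped with $s(u,v)$ throughout — it is only the combination $v-u+s(u,v)$, not $s(u,v)$ by itself, that enjoys the improved bound, reflecting that $\partial_s\Lambda_u$ is the non-degenerate direction of the Jacobian.
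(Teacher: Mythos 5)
Your argument is correct and reaches the estimate, but it takes a genuinely different route than the paper's proof. The paper expands $z_{v-u}(\gamma(u),\xi(u))-z_{-s(u,v)}(\gamma(u),k(u,v))$ by Taylor's theorem with constant coefficients $\partial_\xi p(\gamma(u),\xi(u))$ and $(v-u)\partial_\xi^2 p(\gamma(u),\xi(u))$, producing \eqref{i:diff3} with explicit remainders $R_2, R_3$; the geometric input then enters separately through the almost-orthogonality relation \eqref{i:dpkxi} (from $p(\gamma(u),\xi(u))=p(\gamma(u),k(u,v))=0$) and the inequality \eqref{e:perpineq}, which provide the needed lower bound on the linear form; since $|R_2|+|R_3|\lesssim 2^{-k}\mathfrak d^0$, a small self-referential term must be absorbed. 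Your version folds the constraint $p(\gamma(u),k(u,v))=0$ into the parametrization $\mathcal K_u$ of the characteristic curve — converting the vector $\xi(u)-k(u,v)$ into a scalar $r(u,v)$ — and replaces the Taylor expansion by the exact fundamental-theorem-of-calculus identity $\bigl(\int_0^1 D_{(s,r)}\Lambda_u\,d\tau\bigr)\bigl(v-u+s(u,v),\,r(u,v)\bigr)=\gamma(v)-z_{v-u}(\gamma(u),\xi(u))$, so at the base case there is nothing to absorb. The transversality of the columns and the determinant size $\sim 2^{-k}$ encode exactly the same geometry as the paper's \eqref{ni:dpkxi} together with condition (A2) and \eqref{ineq:vMv}. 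Both proofs rely on the same external input, namely \eqref{i:diff2} with \eqref{e:r1} and \eqref{ineq:r1}, to bound $\gamma(v)-z_{v-u}(\gamma(u),\xi(u))$.

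One small point about the inductive step: differentiating the two identities $\Lambda_u(s(u,v),r(u,v))=\gamma(v)$ and $\Lambda_u(-(v-u),0)=z_{v-u}(\gamma(u),\xi(u))$ separately and subtracting does not literally reassemble into $D\Lambda_u\cdot\partial_{(u,v)}^\alpha(v-u+s,\,r)$, because the Jacobians are evaluated at different points; the cleaner move is to differentiate the FTC identity itself. Alternatively, absorbing the cross term $\bigl[D\Lambda_u(s,r)-D\Lambda_u(-(v-u),0)\bigr]\cdot\partial^\alpha(-(v-u),0)$ works, since that difference is $O(\mathfrak d^0)=O\bigl(2^{-2k}(|u|+2^{-k})^{\sigma-1}\bigr)$ by the base case and is therefore harmless; your remark about absorbing a lower-order contribution with an $O(2^{-k})$ coefficient appears to be gesturing at this. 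Either way the induction closes.
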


\begin{proof}

The proof is by induction on $|\alpha|$. We first work with the case $\alpha=0$. From \eqref{i:1dphix}, it follows that $|k(u,v)-\eta(u,v)|\lesssim 2^{-k}$. By \eqref{r:dseta}, this entails $|\xi(u)-k(u,v)|\lesssim 2^{-k}$. Combining with the bounds \eqref{e:mcr3}--\eqref{e:bdr22}, we get
\begin{align}\label{b:r2pr3}
    |R_2(u,v)|+ |R_3(u,v)|\lesssim 2^{-k}\mathfrak d^0(u,v).
\end{align}
Also, using \eqref{i:diff1} and \eqref{i:diff2}, we see that
\begin{align}
\begin{aligned}\label{i:diff3}
    &\partial_\xi p(\gamma(u), \xi(u)) (v-u+s(u,v)) + (v-u)\partial_\xi^2 p(\gamma(u), \xi(u)) (\xi(u) - k(u,v)) \\
    &\qquad = P_\sigma(u,v)\mathfrak b + \big(R_4-R_2-R_3\big)(u,v).
\end{aligned}
\end{align}
Combining with \eqref{ineq:r1}, \eqref{e:r1}, \eqref{b:r2pr3}, we have
\begin{equation}
\label{ineq:alpha=0}
\begin{aligned}
|\partial_\xi p(\gamma(u),\,&\xi(u)) (v-u+s(u,v)) + (v-u)\partial_\xi^2 p (\gamma(u), \xi(u)) (\xi(u) - k(u,v))| \\
&\lesssim    2^{-2k}(|u|+2^{-k})^{\sigma-1}+2^{-k}\mathfrak d^0(u,v).
\end{aligned}    
\end{equation}

If $\xi(u) - k(u,v) = 0$, then the desired result directly follows from \eqref{ineq:alpha=0} and that $|\partial_\xi p(\gamma(u),\xi(u))|\ge C$. Thus we may assume $\xi(u) - k(u,v) \neq 0$. Then we can observe that $\xi(u)-k(u,v)$ and $\partial_\xi p(\gamma(u), \xi(u))$ are almost orthogonal. Indeed,
recall that $p(\gamma(u), k(u,v)) = p(\gamma(u), \xi(u)) = 0$. Thus applying Taylor's theorem gives
\begin{align}
\begin{aligned}\label{ni:dpkxi}
    0 &= p(\gamma(u), \xi(u)) - p(\gamma(u), k(u,v)) \\
    &= \inp{\partial_\xi p(\gamma(u), \xi(u))}{\xi(u)-k(u,v)} + O(|\xi(u)-k(u,v)|^2),
\end{aligned}
\end{align}
implying %From \eqref{i:1dphix}, it follows that $|k(u,v)-\eta(u,v)|\lesssim 2^{-k}$. Combining with \eqref{r:dseta}, we have $|\xi(u)-k(u,v)|\lesssim 2^{-k}$, which implies
\begin{align}\nonumber
    |\inp{\partial_\xi p(\gamma(u), \xi(u))}{\xi(u)-k(u,v)}|\lesssim 2^{-k}|\xi(u)-k(u,v)|.
\end{align}
By the assumption (A2), this says that the angle between the two vectors
\[
\partial_\xi p(\gamma(u), \xi(u)),\quad \partial_\xi^2 p(\gamma(u), \xi(u))(\xi(u)-k(u,v))
\]
are separated from $0,\pi$ by some constant $c>0$.
Note that for $\mathrm v_1,\mathrm v_2\in \R^2$, the inequality
\begin{align}\label{e:perpineq}
    |\mathrm v_1+\mathrm v_2|\ge \Big(\frac{1+\cos(\angle(\mathrm v_1,\mathrm v_2))}{2}\Big)^{\frac12}(|\mathrm v_1|+|\mathrm v_2|)
\end{align}
holds. Thus, the left-hand side of \eqref{ineq:alpha=0} is bounded below by a constant times $\mathfrak d^0$, which implies that
\[
\mathfrak d^0(u,v)\lesssim2^{-2k}(|u| + 2^{-k})^{\sigma-1}.
\]

Now suppose that for a fixed $j\ge 1$, the assertion holds for every $\alpha\in \N_0^2$ such that $|\alpha|<j$. Let us set $|\alpha|=j$. By the induction hypothesis and the chain rule, differentiating \eqref{i:diff3} yields
\begin{align*}
\begin{aligned}
    &|\partial_\xi p \partial_{(u,v)}^\alpha(v-u+s(u,v)) + (v-u)\partial_\xi^2 p \partial_{(u,v)}^\alpha(\xi(u) - k(u,v))| \\
    &\qquad \lesssim  |\partial_{(u,v)}^\alpha P_\sigma(u,v)\mathfrak b + \partial_{(u,v)}^\alpha\big(R_4 - R_2 - R_3\big)(u,v)|+2^{(|\alpha|-2)k}(|u|+2^{-k})^{\sigma-1}.
\end{aligned}
\end{align*}
We have also used that the derivatives of $\partial_\xi p$, $\partial_\xi^2 p$ are uniformly bounded.
Also, the bounds \eqref{e:mcr3}--\eqref{e:bdr22} and the induction hypothesis imply that
\begin{align}
\begin{aligned}\label{e:devr2r3}
    |\partial_{(u,v)}^\alpha& R_2(u,v)| + |\partial_{(u,v)}^\alpha R_3(u,v)| \lesssim 2^{(|\alpha|-3)k}(|u|+2^{-k})^{\sigma-1}+2^{-k}\mathfrak d^\alpha.  
\end{aligned}
\end{align}
By the above two inequalities and the estimates \eqref{ineq:r1}, \eqref{e:r1}, we have
\begin{equation}
\label{ineq:alpha>0}
    \begin{aligned}
        &\big|\partial_\xi p \partial_{(u,v)}^\alpha(v-u+s(u,v)) + (v-u)\partial_\xi^2 p \partial_{(u,v)}^\alpha(\xi(u) - k(u,v))\big| \\
        &\quad\lesssim 2^{(|\alpha|-2)k}(|u|+2^{-k})^{\sigma-1}+2^{-k}\mathfrak d^\alpha.
    \end{aligned}
\end{equation}
The term in the second line of \eqref{ni:dpkxi} can be explicitly expressed as
\begin{align}\label{i:dpkxi}
    0 = \inp{\partial_\xi p(\gamma(u), \xi(u))}{\xi(u)-k(u,v)} + R_5(u,v),
\end{align}
where
\begin{align}\nonumber
    R_5(u,v) &:= \big(\xi(u)-k(u,v)\big)^\intercal\mathcal R_5(u,v) \big(\xi(u)-k(u,v)\big), \\\nonumber
    \mathcal R_5(u,v) &:=-\frac 12 \int_0^1 \partial_\xi^2 p(\gamma(u), \xi_{u,v}(\tau)) (1-\tau) d\tau.
\end{align}
By \eqref{e:bdonk}, $\mathcal R_5$ satisfies
\begin{align}\nonumber
    \big|\partial_{(u,v)}^\alpha \mathcal R_5(u,v)\big|\le C_\alpha 2^{|\alpha|k}
\end{align}
for $\alpha\in \N_0^2$.
Taking the derivative $\partial_{(u,v)}^\alpha$ on both sides of \eqref{i:dpkxi} and utilizing the induction hypothesis, we obtain
\begin{align}
\begin{aligned}\label{ne:hdinp1}
    \big|\big\langle &\partial_{(u,v)}^{\alpha}(\xi(u)-k(u,v)), \partial_\xi p(\gamma(u), \xi(u))\big\rangle\big| \\
    &\quad\lesssim 2^{(|\alpha|-2) k}(|u|+2^{-k})^{\sigma - 1} + 2^{-k}\big|\partial_{(u,v)}^{\alpha}(\xi(u)-k(u,v))\big|.
\end{aligned}
\end{align}
Let $\mathrm v_{\alpha}(u,v) := \langle\partial_{(u,v)}^{\alpha}(k(u,v) - \xi(u)), \mathrm v(u)\rangle \mathrm v(u)$.
Then by \eqref{ineq:alpha>0}, \eqref{ne:hdinp1}, we get
\begin{align*}
\begin{aligned}
        &\big|\partial_\xi p(\gamma(u), \xi(u)) \partial_{(u,v)}^\alpha(v-u+s(u,v)) + (v-u)\partial_\xi^2 p(\gamma(u), \xi(u))\mathrm v_\alpha(u,v)\big| \\
        &\quad\lesssim 2^{(|\alpha|-2)k}(|u|+2^{-k})^{\sigma-1}+2^{-k}\mathfrak d^\alpha.
\end{aligned}
\end{align*}
Since $\mathrm v_\alpha$ is perpendicular to the vector $\partial_\xi p(\gamma(u),\xi(u))$, applying \eqref{e:perpineq} to the above shows
\begin{align*}
        &\big|\partial_{(u,v)}^\alpha(v-u+s(u,v))\big| + 2^{-k}\big|\inp{\partial_{(u,v)}^{\alpha}(\xi(u)-k(u,v))}{\mathrm v}\big| \\
        &\qquad\quad\lesssim 2^{(|\alpha|-2)k}(|u|+2^{-k})^{\sigma-1}+2^{-k}\mathfrak d^\alpha.
\end{align*}
From the estimate and \eqref{ne:hdinp1}, the desired assertion \eqref{e:setab} follows.
\end{proof}
We now prove Lemma \ref{lem:error}. 
\begin{proof}[Proof of Lemma \ref{lem:error}]
We first note that the bounds \eqref{b:r2r3} and \eqref{b:errxik} are direct results of \eqref{e:devr2r3}, \eqref{ne:hdinp1}, and Lemma \ref{lem:setab}. To establish the bound \eqref{b:r1}, we observe that $2R_1=\inp{(R_{1,1}(u,v),R_{1,2}(u,v))}{\partial_vk(u,v)}$, where
    \begin{align*}
        R_{1,i}(u,v) := -\int_0^1 (\xi(u)-k(u,v))^\intercal\partial_\xi^2\partial_{\xi_i} p(\gamma(u),\xi_{u,v}(\tau))(\xi(u)-k(u,v))(1-\tau) d\tau
    \end{align*}
    for $i = 1,2$.
    Since each entry of the matrix $\partial_{(u,v)}^\alpha\big(\partial_\xi^2\partial_{\xi_i} p(\gamma(u),\xi_{u,v}(\tau))\big)$ is bounded by $C2^{|\alpha|k}$, using Lemma \ref{lem:setab} together with the chain rule, we obtain \eqref{b:r1}, as desired.
\end{proof}

\subsection{Proof of Lemma \ref{lem:djbegam}}
We first prove that the expression \eqref{i:ztj} is valid.
We claim that, for $0\le m\le \sigma-1$,
    \begin{align}\label{i:dmsig}
        \partial_t^m\zeta_t(\gamma(0),\xi(0))\Big|_{t=0} = \xi^{(m)}(0)
    \end{align}
    and
    \begin{align}
    \begin{aligned}\label{i:dabet}
        \partial_t^{\sigma+1}&z_t(\gamma(0), \xi(0))\Big|_{t=0} - \gamma^{(\sigma+1)}(0) = \mathfrak b.            
    \end{aligned}
    \end{align}

    Assume for the moment that the claim is true.
    When $j=\sigma+1$, the result immediately follows from the Taylor theorem. Now consider the case $2\le j\le \sigma$. Applying Taylor's theorem, for $j\ge2$,
\begin{align}
\begin{aligned}\label{i:djbeta}
    \partial_t^{j}&z_t(\gamma(u),\xi(u))\Big|_{t=0} - \gamma^{(j)}(u) \\
        &= \sum_{\nu=0}^{\sigma+1-j}\bigg(\partial_u^\nu\Big(\partial_t^{j}z_t(\gamma(u),\xi(u))\Big|_{t=0}\Big)\bigg|_{u=0}-\gamma^{(j+\nu)}(0)\bigg)\frac{u^\nu}{\nu!} + O(u^{\sigma+2-j}).
\end{aligned}
\end{align}
To proceed, we make use of the following observation: for $j\in\N_0$ and $\nu\le \sigma-1$,
\begin{align}
    \begin{aligned}\label{i:mdbsi}
        \partial_u^\nu\Big(\partial_t^{j}z_t(\gamma(u),\xi(u))\Big|_{t=0}\Big)\Big|_{u=0} &= \partial_t^{j+\nu}z_t(\gamma(0), \xi(0))\Big|_{t=0}.
    \end{aligned}
\end{align}
To see this, set $\mathcal Q_0(z,\zeta) := z$ and 
\[
\mathcal Q_{j+1}(z,\zeta) := \partial_z\mathcal Q_j(z,\zeta)\partial_\xi p(z,\zeta) - \partial_\zeta\mathcal Q_j(z,\zeta)\partial_x p(z,\zeta).
\]
Then
\begin{align*}%\label{i:qjs2}
    \partial_t^j z_t(\gamma(u), \xi(u)) = \mathcal Q_j(\kappa_t(\gamma(u), \xi(u))).
\end{align*}
One can prove the above identity by using the induction on $j$. Hence
%Indeed, it is trivial when $j=0$ and induction works well by the fact $\kappa_t$ denotes the bicharacteristic flow.
it suffices to show
\[
\partial_u^\nu\big(\mathcal Q_j(\gamma(u), \xi(u))\big)\big|_{u=0} = \partial_t^\nu\big(\mathcal Q_j(z_t(\gamma(0), \xi(0)), \zeta_t(\gamma(0), \xi(0))\big)\big|_{t=0}.
\]
This can be done by a routine calculation using the chain rule. One also needs to use \eqref{i:dmsig} and the assumption
\begin{align}\label{i:mdgamma}
    \partial_t^m z_t(\gamma(0), \xi(0))\Big|_{t=0} = \gamma^{(m)}(0),\quad 0\le m\le \sigma
\end{align}
to deduce the above formula.

Substituting \eqref{i:mdbsi} into \eqref{i:djbeta} and using \eqref{i:mdgamma} yields
\begin{align*}
    \partial_t^{j}z_t(\gamma(u),&\xi(u))\Big|_{t=0} - \gamma^{(j)}(u) \\
    &= \Big(\partial_t^{\sigma+1}z_t(\gamma(0), \xi(0))\Big|_{t=0} - \gamma^{(\sigma+1)}(0)\Big)\frac{u^{\sigma+1-j}}{(\sigma+1-j)!} + O(u^{\sigma+2-j}).
\end{align*}
Applying \eqref{i:dabet} to the above proves the required assertion.

Now we prove the claim. The proof of \eqref{i:dmsig} is by induction on $m$. The case $m=0$ is trivial, so assume that \eqref{i:dmsig} is true for $0\le m\le m'$, where $0\le m'\le \sigma-2$. To prove the statement with $m=m'+1$, we first observe that
\[
\partial_t^{\nu+1}\zeta_t(\gamma(0), \xi(0))\Big|_{t=0} = -\partial_u^\nu\Big(\partial_x p(\gamma(u),\xi(u))\Big)\Big|_{u=0}, \quad 0\le \nu\le m'.
\]
To see this, note that the identity is equivalent to
\begin{align*}
    \partial_t^{\nu}\big(\partial_x p(z_t(\gamma(0), \xi(0)), \zeta_t(\gamma(0), \xi(0)))\big)\big|_{t=0} = \partial_u^\nu\big(\partial_x p(\gamma(u),\xi(u))\big)\big|_{u=0}
\end{align*}
for $0\le \nu\le m'$. This follows from a direct calculation utilizing the induction hypothesis and \eqref{i:mdgamma}. Hence it is sufficient to show
\begin{align}\label{i:xim'}
    - \partial_u^{m'}\Big(\partial_x p(\gamma(u),\xi(u))\Big)\Big|_{u=0} = \xi^{(m'+1)}(0).
\end{align}
We recall the identities \eqref{i:perp1}--\eqref{i:gt3}. Using the same argument as in the proof of Lemma \ref{lem:finite}, we conclude $\dot\xi(0) + \partial_x p(\gamma(0),\xi(0)) = 0$. Hence \eqref{i:xim'} with $m'=0$ holds.

Next, suppose that $m'\ge 1$. Then we note that our induction hypothesis implies
\begin{align}\label{i:hyxinu}
    - \partial_u^{\nu}\Big(\partial_x p(\gamma(u),\xi(u))\Big)\Big|_{u=0} = \xi^{(\nu+1)}(0),\quad 0\le \nu\le m'-1.
\end{align}
%We also note that, if $m'\ge 1$, our induction hypothesis implies
%\begin{align}\label{i:hyxinu}
%    - \partial_u^{\nu}\Big(\partial_x p(\gamma(u),\xi(u))\Big)\Big|_{u=0} = \xi^{(\nu+1)}(0),\quad 0\le \nu\le m'-1.
%\end{align}
%Now we recall $p(\gamma(u),\xi(u)) = 0$ for $u\in I$. Differentiating both sides in $u$ gives 
%\[
%\inp{\dot\gamma(u)}{\partial_x p(\gamma(u),\xi(u))} + \inp{\dot\xi(u)}{\partial_\xi p(\gamma(u),\xi(u))} = 0.
%\]
%Since $\dot\gamma(u) = \partial_\xi p(\gamma(u),\xi(u))$, we get
%\begin{align}\label{i:1dxi}
%    \inp{\partial_\xi p(\gamma(u),\xi(u))}{\dot\xi(u) + \partial_x p(\gamma(u),\xi(u))} = 0.
%\end{align}
%On the other hand, recalling the computation which we have done in the proof of Lemma \ref{lem:finite}, we obtain
%\begin{align*}
%    \partial_\xi^2 p(\gamma(u), \xi(u))\big(\dot\xi(0) + \partial_x p(\gamma(0),\xi(0))\big)=0.
%\end{align*}
%Suppose now that $m'=0$. In the case, it can be easily shown $\dot\xi(0) + \partial_x p(\gamma(0),\xi(0)) = 0$ by using the same argument as in the proof of Lemma \ref{lem:finite}. Indeed, substituting $u=0$ into $\eqref{i:1dxi}$ implies that $\dot\xi(0) + \partial_x p(\gamma(0),\xi(0))$ is perpendicular to $\partial_\xi p(\gamma(0), \xi(0))$. By (A2), $\partial_\xi^2 p(\gamma(0), \xi(0))\big(\dot\xi(0) + \partial_x p(\gamma(0),\xi(0))\big)$ should be nonzero vector unless $\dot\xi(0) + \partial_x p(\gamma(0),\xi(0)) = 0$. Thus we conclude that \eqref{i:xim'} with $m'=0$ holds.
%Now we look for the case $m'\ge 1$. 
Differentiating both sides of $p(\gamma(u),\xi(u)) = 0$ $m'+1$ times gives
\begin{align*}
    \sum_{\nu = 0}^{m'} C_\nu \Big\langle \partial_u^\nu\big(\partial_\xi p(\gamma(u), \xi(u))\big), \xi^{(m'-\nu+1)}(u) + \partial_u^{m'-\nu}\big(\partial_x p(\gamma(u),\xi(u))\big) \Big\rangle = 0,
\end{align*}
where $C_\nu$, $0\le \nu\le m'$, is a positive constant. 
Taking $u=0$ and then using the hypothesis \eqref{i:hyxinu} yields
\begin{align}\label{i:dmxiperp}
    \big\langle\partial_\xi p(\gamma(0), \xi(0)), \xi^{(m'+1)}(0) + \partial_u^{m'}(\partial_x p(\gamma(u), \xi(u)))\big|_{u=0}\big\rangle = 0.
\end{align}
We now observe that, by differentiating $\dot\gamma(u) = \partial_\xi p(\gamma(u),\xi(u))$ $m'+1$ times, one can express $\gamma^{(m'+2)}(0)$ as
\begin{align}\label{i:gamm'p}
    \gamma^{(m'+2)}(0) = \mathcal P_{m'}(\dot\gamma(0),\dots,\gamma^{(m'+1)}(0),\dot\xi(0),\dots,\xi^{(m')}(0)) + \partial_\xi^2 p(\gamma(0),\xi(0))\xi^{(m'+1)}(0),
\end{align}
where $\mathcal P_{m'}$ denotes a multivariate polynomial of degree $m'+1$. One can deduce this by direct computation. By the same argument, we obtain
\begin{align}
\begin{aligned}\label{i:betam'p}
    \partial_t^{m'+2}z_t(\gamma(0)&,\xi(0))\big|_{t=0} \\
    = \mathcal P_{m'}&\Big(\Big(\partial_t^\nu z_t(\gamma(0),\xi(0))\big|_{t=0}\Big)_{\nu=1}^{m'+1}, \Big(\partial_t^{\nu'} \zeta_t(\gamma(0),\xi(0))\big|_{t=0}\Big)_{\nu'=1}^{m'}\Big) \\
    &\qquad + \partial_\xi^2 p(\gamma(0),\xi(0))\cdot \partial_t^{m'+1}\zeta_t(\gamma(0),\xi(0))\big|_{t=0},
\end{aligned}
\end{align}
where $(\partial_t^\nu z_t(\gamma(0),\xi(0))|_{t=0})_{\nu=1}^{m'+1}$, $(\partial_t^{\nu'} z_t(\gamma(0),\xi(0))|_{t=0})_{\nu'=1}^{m'}$ denote the vectors in $\R^{2(m'+1)}$, $\R^{2m'}$, respectively.
Making use of \eqref{i:mdgamma} and the induction hypothesis, we get
\begin{align*}
    \partial_\xi^2 p(\gamma(0),\xi(0))\Big(\xi^{(m'+1)}(0) + \partial_u^{m'}(\partial_x p(\gamma(u), \xi(u)))\big|_{u=0}\Big) = 0.
\end{align*}
Following the same argument as what we have used in the case $m'=0$, we see that the above identity and \eqref{i:dmxiperp} implies that $\xi^{(m'+1)}(0) + \partial_u^{m'}(\partial_x p(\gamma(u), \xi(u)))\big|_{u=0} = 0$. This is the desired conclusion completing the proof of \eqref{i:dmsig}. \eqref{i:dabet} is a direct consequence of the above computation. Indeed, taking $m'=\sigma-1$ into \eqref{i:gamm'p}, \eqref{i:betam'p} and subtracting these two identities, we obtain \eqref{i:dabet} as an application of \eqref{i:dmsig} and \eqref{i:mdgamma}. 

It remains to show that $|\inp{\mathfrak b}{\mathrm v}|>0$. However, this is an immediate consequence of the definition of $\mathfrak b$ stated in the lemma and the identity \eqref{i:dmxiperp} with $m'=\sigma-1$, and the condition (A2). Therefore, all the assertions of the lemma follow.

\section{On the optimality of quasimode and eigenfunction estimates}\label{sec:optimal}

\subsection{Optimality of Theorem \ref{thm:main}}
In this subsection, we establish the sharpness of Theorem \ref{thm:main}. The strategy is to deduce it as a consequence of the following statement.

\begin{prop}\label{prop:sharp}
    Let $p, \gamma, h_0$ be as in Theorem \ref{thm:main}. Let $\varphi\in C_c^\infty(\R)$ such that $\supp(\varphi)\subset(-2\eps_0,2\eps_0)$, $\varphi \equiv 1$ on $[-\eps_0,\eps_0]$ for a sufficiently small $\eps_0>0$. Assume that $\sigma_{\gamma, p}\ge 1$. Then for every $\eps > 0$, there exist $\chi\in C_c^\infty(T^*M)$ and a collection of $L^2$ normalized functions $f = f(h)$, $h\in (0,h_0]$, such that for $h\in (0,h_0],\ 2\le q\le \infty$,
    \begin{align}\label{e:lbfh}
        \|\chi^w(x,hD)\widecheck\varphi(h^{-1}p^w(x,hD)) f(h)\|_{L^q(\gamma)}\ge \begin{cases}
            C h^{-\rho(q,\sigma)} & \sigma < \infty, \\
            C_\eps h^{-\frac14+\eps} & \sigma = \infty
        \end{cases}
    \end{align}
    with constants $C$, $C_\eps>0$.
\end{prop}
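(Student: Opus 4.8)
The plan is to reduce to the model situation of Proposition~\ref{prop:main} and then construct the extremizer by a $TT^*$ argument. Since the cutoffs only microlocalize, it suffices to produce $\chi$ and $f(h)$ in a chart with $\gamma$ and $p$ as in Proposition~\ref{prop:main}; after a translation I may assume the maximal contact order $\sigma$ is attained at $t=0$, so that $\gamma^{(j)}(0)=\partial_s^{j}z_s(\gamma(0),\xi(0))|_{s=0}$ for $j\le\sigma$ and, when $\sigma<\infty$, the leading discrepancy $\mathfrak{b}$ of Lemma~\ref{lem:djbegam} is nonzero. I treat $\sigma<\infty$ first; the case $\sigma=\infty$ will follow by running the construction at a point of contact order $\ge N$ (such a point exists for every $N$ when $\sigma=\infty$) and letting $N\to\infty$, since $\rho(q,N)\uparrow\tfrac14$. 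Write $\tau:=h^{\frac{1}{2\sigma+1}}$ for the critical length scale $2^{-k}\sim h^{\frac1{2\sigma+1}}$ of Proposition~\ref{prop:esttk}, and record the elementary identity $\rho(q,\sigma)=\rho(2,\sigma)-\frac{1}{2\sigma+1}\bigl(\frac1q-\frac12\bigr)$, valid for all $q$.

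Set $\psi(s):=\overline{\varphi(-s)}$ and $A:=\chi^w(x,hD)\widecheck\varphi(h^{-1}p^w(x,hD))$. Since $p^w$ is self-adjoint and $\chi$ real, $AA^*=\chi^w\widecheck{(\varphi*\psi)}(h^{-1}p^w)\chi^w$, where $\varphi*\psi$ is again a small bump (supported near $0$, so the results of Sections~\ref{sec:redpropa}--\ref{sec:phase} apply to the associated operator) with $\widecheck{(\varphi*\psi)}=|\widecheck\varphi|^2\ge0$; by Proposition~\ref{prop:asym1} the kernel of $AA^*$ restricted to $\gamma\times\gamma$ is, modulo $O(h^\infty)$, the operator $T$ of Section~\ref{sec:redpropa} with $\varphi$ replaced by $\varphi*\psi$. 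I take
\[
f(h):=\frac{A^*g_h}{\|A^*g_h\|_{L^2}},\qquad g_h(v):=e^{-\frac ih\Phi(\bar u,v)}\,a\!\left(\frac{v}{\delta\tau}\right),
\]
with $a\in C_c^\infty(\R)$, $a\ge0$, $\delta>0$ a fixed small constant, $\bar u$ a fixed parameter with $|\bar u|\sim\tau$ (so that $(\bar u,0)$ and its $\delta\tau$-neighbourhood lie in the range of the expansions of Section~\ref{sec:phase}), $\Phi$ the phase \eqref{i:defPhi}, and $g_h$ regarded as a function on $\gamma$. Since $\|A^*g_h\|_{L^2}^2=\inp{AA^*g_h}{g_h}=\inp{Tg_h}{g_h}$ (which is $>0$, else $Tg_h\equiv0$), we get
\[
\|Af(h)\|_{L^q(\gamma)}=\frac{\|Tg_h\|_{L^q(\gamma)}}{\inp{Tg_h}{g_h}^{1/2}}.
\]
The denominator costs nothing: by the already-proven upper bound, $\inp{Tg_h}{g_h}\le\|T\|_{L^2(\gamma)\to L^2(\gamma)}\|g_h\|_{L^2}^2\lesssim h^{-2\rho(2,\sigma)}\delta\tau$ (Proposition~\ref{prop:esttk}). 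Hence everything reduces to the pointwise lower bound
\[
|Tg_h(u)|\gtrsim\delta\,h^{-2\rho(2,\sigma)}\quad\text{for }|u-\bar u|\le\delta\tau,\qquad(\star)
\]
because $(\star)$ yields $\|Tg_h\|_{L^q(\gamma)}\gtrsim\delta\,h^{-2\rho(2,\sigma)}(\delta\tau)^{1/q}$, hence $\|Af(h)\|_{L^q(\gamma)}\gtrsim\delta^{\frac12+\frac1q}h^{-\rho(2,\sigma)}\tau^{\frac1q-\frac12}=\delta^{\frac12+\frac1q}h^{-\rho(q,\sigma)}$ by the identity above, uniformly for $2\le q\le\infty$.

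For $(\star)$ I run the analysis of Sections~\ref{sec:redpropa}--\ref{sec:phase} in reverse. On the region $|v|\le\delta\tau$, $|u-\bar u|\le\delta\tau$ one has $|u-v|\sim\tau\gg h$, so the stationary phase of Proposition~\ref{prop:asym1} in $(s,\eta)$ is non-degenerate (its Hessian determinant is comparable to $|u-v|\sim\tau$) and gives $T(u,v)=h^{-1/2}|u-v|^{-1/2}\beta(u,v)\,e^{\frac ih\Phi(u,v)}+(\text{lower order})$ with $\beta$ bounded below near the contact configuration. The point is that over this box $\Phi$ is flat up to $O(\delta^2 h)$ modulo a function of $u$ alone: by \eqref{i:uvphiexp2} and \eqref{e:bdphase} one has $|\partial_u\partial_v\Phi|\lesssim\tau^{2\sigma-2}$ there, whence
\[
\Phi(u,v)-\Phi(\bar u,v)=\bigl(\Phi(u,0)-\Phi(\bar u,0)\bigr)+O\!\bigl(\tau^{2\sigma-2}(\delta\tau)^2\bigr)=\bigl(\Phi(u,0)-\Phi(\bar u,0)\bigr)+O(\delta^2 h),
\]
using $\tau^{2\sigma+1}=h$. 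Consequently, with the modulation built into $g_h$,
\[
Tg_h(u)=e^{\frac ih(\Phi(u,0)-\Phi(\bar u,0))}\,h^{-1/2}\!\!\int|u-v|^{-1/2}\beta(u,v)\,a\!\left(\tfrac{v}{\delta\tau}\right)\bigl(1+O(\delta^2)\bigr)dv+(\text{error}),
\]
and since $\beta>0$ and $|u-v|\sim\tau$ the integral is $\gtrsim\delta\tau\cdot\tau^{-1/2}$, so $|Tg_h(u)|\gtrsim\delta\,h^{-1/2}\tau^{1/2}=\delta\,h^{-2\rho(2,\sigma)}$; the secondary terms of the stationary phase gain a factor $h/\tau$ and the Proposition~\ref{prop:asym1} remainder is $O(h^\infty)$, so neither disturbs the bound on $|u-\bar u|\le\delta\tau$. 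This establishes $(\star)$, hence the case $\sigma<\infty$ for all $q$, and the case $\sigma=\infty$ as explained above.

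The whole difficulty is $(\star)$: one must verify that the modulation $\Phi(\bar u,\cdot)$ really does align the phase, i.e.\ one uses the Section~\ref{sec:phase} estimates from below. This hinges on (i) the explicit expansion \eqref{i:uvphiexp2} of $\partial_u\partial_v\Phi$ together with the relation $\tau^{2\sigma+1}=h$, which force the residual oscillation over a box of side $\delta\tau$ to be $O(\delta^2)$; and (ii) a careful accounting that every lower-order contribution — the secondary terms of the stationary phase, the Schwartz tails of $\widecheck\varphi$, the $O_{\mathcal S}(h^\infty)$ error of Proposition~\ref{prop:asym1}, and the effect of the microlocal cutoff $\chi^w$ — stays $o\bigl(\delta\,h^{-2\rho(2,\sigma)}\bigr)$ on the arc $|u-\bar u|\le\delta\tau$. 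By contrast, the $TT^*$ packaging makes the denominator free (it is supplied by the upper bound), and the parity dichotomy of Proposition~\ref{prop:phibound} is irrelevant here, since only the upper bound \eqref{e:bdphase} on $\partial_u\partial_v\Phi$ is used.
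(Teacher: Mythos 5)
Your proposal is correct and takes essentially the same route as the paper: a $TT^*$ reduction, the kernel asymptotics of Proposition~\ref{prop:asym1} with stationary phase in $(s,\eta)$ at the critical scale $\tau=h^{1/(2\sigma+1)}$, a modulated bump at that scale to cancel the oscillation of $\Phi$, and the observation that only the \emph{upper} bound \eqref{e:bdphase} on $\partial_u\partial_v\Phi$ is needed (so the parity dichotomy is irrelevant). The main cosmetic difference is packaging: you construct $f(h)=A^*g_h/\|A^*g_h\|_{L^2}$ explicitly and control the denominator by the already-proven upper bound, while the paper lower-bounds $\|AA^*\|_{L^{q'}(\gamma)\to L^q(\gamma)}$ directly by testing against $f_h(v)=\delta^{-1/q'}\chi_{I_2^\circ}(v)e^{-i\Phi(\mathbf c_1^\circ,v)/h}$; you also swap the roles of the $u$- and $v$-windows relative to the contact point, which is harmless. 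Two small points: the estimate you invoke should be $|\partial_u\partial_v\Phi|\lesssim\tau^{2\sigma-1}$, not $\tau^{2\sigma-2}$ --- this is what \eqref{e:bdphase} gives with $2^{-k}\sim\tau$ and $|n|\lesssim1$, and it is precisely what makes the residual phase $O(\tau^{2\sigma-1}(\delta\tau)^2)=O(\delta^2 h)$ as you conclude (your stated bound $\tau^{2\sigma-2}$ would only give $O(\delta^2h/\tau)$, which is not small). For $\sigma=\infty$, your plan to run the $\sigma=N$ argument at the infinite-contact point is sound, but it rests on the (easy but unstated) fact that \eqref{e:bdphase} still holds with $\sigma$ replaced by any $N$, because \eqref{i:diff2} then reads $z_{v-u}(\gamma(u),\xi(u))-\gamma(v)=O((|u|+|u-v|)^{N+2})$ with $\mathfrak b=0$; the paper instead derives the dedicated bound $|\partial_u\partial_v\Phi|\le C_N\delta(|u|+|u-v|)^N$ and works at scale $\delta=h^\eps$ with $\eps N>1$ --- an equivalent device.
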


\begin{proof}[Proof that Proposition \ref{prop:sharp} implies the sharpness of Theorem \ref{thm:main}]
We assume $\sigma < \infty$. The other case $\sigma = \infty$ can be dealt with in the same manner.
For $h\in (0, h_0]$, we define a family of functions $g=g(h)$ by
\[
g(h) = \chi^w(x,hD)\widecheck\varphi(h^{-1}p^w(x,hD)) f(h)
\]
where $f(h)$ is a collection of functions as in Proposition \ref{prop:sharp}. Clearly, $g(h)$ satisfies the localization condition \eqref{i:localized}. Since the operators $\chi^w(x,hD), \widecheck\varphi(h^{-1}p^w(x,hD)): L^2(\R^2)\to L^2(\R^2)$ are bounded, $\|g(h)\|_{L^2}\le C$ with a positive constant $C$ independent of $h$. Combining with \eqref{e:lbfh}, we deduce
%We claim that $g(h)$ satisfies \eqref{p:quasimode} and
\begin{equation*}%\label{e:lbgh}
    \|g(h)\|_{L^q(\gamma)}\ge C h^{-\rho(q, \sigma)}\|g(h)\|_{L^2},\quad h\in(0,h_0]
\end{equation*}
with a constant $C>0$ independent of $h$. Thus, it suffices to show that $g$ is a quasimode, that is, $p^w(x,hD)g = O_{L^2}(h)$. By definition, we write
\begin{align*}
    p^w(x,hD)g &= [p^w(x,hD), \chi^w(x,hD)] \widecheck\varphi(h^{-1}p^w(x,hD)) f\\
    &\qquad\quad + \chi^w(x,hD)p^w(x,hD)\widecheck\varphi(h^{-1}p^w(x,hD)) f,
\end{align*}
where $[ \cdot,\cdot ]$ denotes the commutator. We note that 
\[
[p^w(x,hD), \chi^w(x,hD)] = O_{L^2}(h),\quad h^{-1}p^w(x,hD)\widecheck\varphi(h^{-1}p^w(x,hD)) = O_{L^2}(1).
\]
From this, the desired conclusion follows.
\end{proof}

The proof of Proposition \ref{prop:sharp} hinges on the argument that we have developed when proving the upper bound \eqref{e:main}.
By the discussion at the end of Section \ref{sec:redgloc}, $\gamma$ can be assumed to be a curve parametrized by $\gamma: I\to \R^2$ where $I$ is a small compact interval so that $0\in I$, $\sigma_{\gamma,p}(0) = \sigma$, and $\sigma_{\gamma,p}(s)= 1$ for $s\in I\setminus\{0\}$.

By the $TT^*$ argument, \eqref{e:lbfh} is equivalent to
\begin{align*}%\label{e:lbtt*}
    \|\chi^w(x,hD)\big(\widecheck{\varphi}\big)^2(h^{-1}p^w(x,hD))\chi^w(x,hD)\|_{L^{q'}(\gamma)\to L^q(\gamma)} \ge \begin{cases}
        C h^{-2\rho(q,\sigma)} & \sigma<\infty, \\
        C_\eps h^{-\frac12 + \eps} & \sigma = \infty.
    \end{cases}
\end{align*}
Note that $(\widecheck\varphi)^2 = (\varphi\ast\varphi)^\vee$ and $\varphi\ast\varphi(s) > 0$ for $s$ near $0$. Thus, using \eqref{i:2chivphi} and Proposition \ref{prop:asym1}, one can write
\begin{align}\nonumber
    \chi^w(x,hD)\big(\widecheck{\varphi}\big)^2&(h^{-1}p^w(x,hD))\chi^w(x,hD)(x,y) \\\label{i:lbdec}
    &= \frac{1}{h^2}\int (\varphi\ast\varphi)(s) e^{\frac ih(\phi(s,x,\eta)-\inp{y}{\eta})}b(s,x,\eta;h)d\eta ds + E(x,y)
\end{align}
where $\phi, b$ are as in Proposition \ref{prop:asym1} and $E(x,y)$ is a smoothing error such that the operator $E: f\mapsto \int E(x,y) f(y) dy$ satisfies $E\in O_{\mathcal S'\to \mathcal S}(h^N)$ for a sufficiently large $N>0$.

Now substitute $x=\gamma(u)$, $y=\gamma(v)$, $u,v\in I$. Let
\[
I_1^\circ:= [-\eps_0 \delta, \eps_0 \delta],\quad I_2^\circ:= [(1-\eps_0) \delta, (1+\eps_0) \delta]
\]
for a small constant $\eps_0>0$, where $\delta = h^{1/(2\sigma+1)}$ if $\sigma<\infty$, and $\delta = h^\eps$ with small $\eps>0$ if $\sigma = \infty$. Let $\gamma_1^\circ$, $\gamma_2^\circ$ be images of $I_1^\circ$, $I_2^\circ$ under the map $\gamma$. Now we define the function $f_h^\circ: \gamma\to \R$ so that $f_h^\circ(\gamma(v)) = \chi_{I_2^\circ}(v)$. From now on, we assume that $u\in I_1^\circ$, $v\in I_2^\circ$.

Recall the cutoff function $\psi$ defined in Section \ref{sec:redpropa}. For large $K$, we define $\psi_K$ by
\[
\psi_K(s) := \sum_{k:\,K^{-1}\delta\le 2^{-k}\le K \delta} \psi(2^k s).
\]
We then split the first term in \eqref{i:lbdec} into two terms
\begin{align}\label{t:2terms}
    \frac{1}{h^2}\int (\varphi\ast\varphi)(s)\psi_K(s) e^{\frac ih(\phi(s,\gamma(u),\eta)-\inp{\gamma(v)}{\eta})}b(s,\gamma(u),\eta)d\eta ds + \mathcal E_1(u,v)
\end{align}
where
\begin{align*}
    \mathcal E_1(u,v) := \frac{1}{h^2}\int (\varphi\ast\varphi)(s)\big(1-\psi_K(s)\big) e^{\frac ih(\phi(s,\gamma(u),\eta)-\inp{\gamma(v)}{\eta})}b(s,\gamma(u),\eta)d\eta ds.
\end{align*}
Since $|\gamma(u)-\gamma(v)|\sim \delta$, by \eqref{b:lbphi} and \eqref{r:dseta} the function $(s,\eta)\mapsto \phi(s,\gamma(u),\eta)-\inp{\gamma(v)}{\eta}$ would not have a stationary point on the support of the integrand of $\mathcal E_1$ if we choose $K$ to be sufficiently large. Performing the same calculation as in Section \ref{sec:redpropa}, we deduce that
\[
|\mathcal E_1(u,v)|\lesssim h^{-1} \big(h\delta^{-1}\big)^N,\quad N\gg 1,
\]
implying that the contribution of $\mathcal E_1$ is negligible. %This is guaranteed by the assumption $\sigma\ge 1$.

To estimate the first term in \eqref{t:2terms}, we make use of the stationary phase method as before. Using \cite[Theorem 7.7.5]{Hor83}, we write
\begin{align*}
    &\frac{1}{h^2}\int (\varphi\ast\varphi)(s)\psi_K(s) e^{\frac ih(\phi(s,\gamma(u),\eta)-\inp{\gamma(v)}{\eta})}b(s,\gamma(u),\eta)d\eta ds \\
    &\qquad = h^{-\frac12}\delta^{-\frac12}\mathbf B(u,v) e^{\frac{i}{h}\Phi(u,v)} + \mathcal E_2(u,v)
\end{align*}
where $\Phi$ is as in \eqref{i:defPhi} and
\begin{align*}
    \mathbf B(u,v) := c(\varphi\ast\varphi)&(s(u,v))\psi_K(s(u,v)) \\
    &\times b(s(u,v),\gamma(u),\eta(u,v))\big|\!\det(\delta^{-1}\partial_{(s,\eta)}^2\wt\phi(u,v,0,0))\big|^{-\frac12}
\end{align*}
with $c\in \C\setminus\{0\}$ and $s(u,v), \eta(u,v), \wt\phi(u,v,s,\eta)$ defined in Section \ref{sec:redpropa}.
Also, $\mathcal E_2$ denotes the error term bounded by $h^{\frac12}\delta^{\frac12}$.
By Proposition \ref{prop:asym1}, $|\mathbf B(u,v)|\ge C$ if $(u,v)\in I_1^\circ\times I_2^\circ$ for some constant $C>0$.

Therefore, for $f\in L^{q'}(\gamma)$, $\chi^w(x,hD)\big(\widecheck{\varphi}\big)^2(h^{-1}p^w(x,hD))\chi^w(x,hD)f$ is expressed by
\begin{align}\label{t:Bf}
    h^{-\frac12}\delta^{-\frac12}\int \mathbf B(u,v) e^{\frac{i}{h}\Phi(u,v)} f(v) dv
\end{align}
modulo the aforementioned error terms. Let $\mathbf c_1^\circ, \mathbf c_2^\circ$ be the centers of $I_1^\circ$, $I_2^\circ$, respectively.
Now we define
\[
f_h(v):= \delta^{-\frac{1}{q'}} f_h^\circ(v)e^{-\frac{i}{h}\Phi(\mathbf c_1^\circ, v)}.
\]
Note that $\|f_h\|_{L^{q'}(\gamma)}\lesssim 1$ because $\|f_h^\circ\|_{L^{q'}(\gamma)}\lesssim \delta^{\frac{1}{q'}}$.
By the Taylor theorem and \eqref{e:bdphase}, the function
\begin{align}\label{t:difphase}
    \Phi(u,v) + \Phi(\mathbf c_1^\circ, \mathbf c_2^\circ) - \Phi(u,\mathbf c_2^\circ) - \Phi(\mathbf c_1^\circ,v)
\end{align}
is bounded by $10^{-2}\delta^{2\sigma+1}$ on $I_1^\circ\times I_2^\circ$ provided that $\sigma<\infty$ and $\eps_0$ in the definition of $I_1^\circ$, $I_2^\circ$ is taken to be sufficiently small. If $\sigma = \infty$, one can obtain
\[
\big|\partial_u\partial_v\Phi(u,v)\big|\le C_N \delta(|u|+|u-v|)^N,\quad N\in\N.
\]
This can be shown by using \eqref{ineq:diff4} and the same argument as what we have employed to derive \eqref{i:diff2}. Combining the above estimate with the Taylor theorem implies that \eqref{t:difphase} is bounded by $C_N \delta^N$. We choose $N$ so large that $\eps N > 1$.

Therefore, the absolute value of \eqref{t:Bf} with $f = f_h$ is comparable to $h^{-1/2}\delta^{1/2-1/q'}$ if $u\in I_1^\circ$. 
Clearly $|I_1^\circ|\sim \delta$, thus we get
\begin{align*}
    \big\|\chi^w(x,hD)\big(\widecheck{\varphi}\big)^2(h^{-1}p^w(x,hD))\chi^w(x,hD)f_h\big\|_{L^{q}(\gamma_1^\circ)}\ge \begin{cases}
        C h^{-2\rho(q,\sigma)} & \sigma<\infty, \\
        C_\eps h^{-\frac12 + \eps} & \sigma = \infty,
    \end{cases}
\end{align*}
with constants $C, C_\eps >0$. This proves the desired lower bound.

\subsection{Discussion on the optimality of Corollary \ref{cor:eigencpt}, \ref{cor:eigensch}}
In this subsection, we prove that the estimates \eqref{e:eigencpt}, \eqref{e:eigensch} in Corollary \ref{cor:eigencpt}, \ref{cor:eigensch} are sharp when $f$ is the Laplace-Beltrami eigenfunctions on $\mathbb S^2$ and the Hermite functions on $\R^2$, respectively. By the scaling argument, these sharpness results follow once the following is shown.

\begin{prop}\label{prop:sharpeign}
    Let $M$ be a smooth $2$-dimensional compact Riemannian manifold and $P(h)$ be either $-h^2\Delta_g-1$ on $M$ or $-h^2\Delta + |x|^2 - 1$ on $\R^2$. Suppose that $\gamma: [a,b]\to \R^2$ is a smooth curve in the underlying manifold. Assume further that $\gamma\subset \{|x| < 1\}$ if $P(h) = -h^2\Delta + |x|^2 - 1$ on $\R^2$. Then there exist constants $R\in \N$, $C = C(p, \gamma)>0$ such that there exists a function $n(h): (0,h_0]\to \{k\in \Z: -R\le k\le R\}$ satisfying
    \begin{align}\label{e:lbeign}
        \|\mathds 1_{[n,n+1]}(h^{-1}P(h))\|_{L^2\to L^q(\gamma)}\ge C h^{-\rho(q,\sigma)},\quad 2\le q\le \infty,
    \end{align}
    provided that $\sigma < \infty$. If $\sigma = \infty$, we have \eqref{e:lbeign} with the bound $C h^{-\rho(q,\sigma)}$ replaced by $C_\eps h^{-1/4+\eps}$ for $\eps > 0$.
\end{prop}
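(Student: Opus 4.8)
The plan is to deduce \eqref{e:lbeign} from Proposition \ref{prop:sharp} by peeling off a single dyadic spectral block of $h^{-1}P(h)$; the new point is that the rapid decay of $\widecheck\varphi$ off the characteristic set, together with the \emph{already established} upper bound of Theorem \ref{thm:main} (equivalently Corollary \ref{cor:eigencpt}), confines the essential part of the relevant quasimode to a bounded number of adjacent spectral windows, after which one pigeonholes. The case $\sigma=\infty$ will have to be treated by an explicit construction instead.

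Write $\tilde P=P(h)$ and let $p$ be the compactly supported admissible symbol furnished by Proposition \ref{prop:sharp}, agreeing with the principal symbol of $\tilde P$ near the relevant compact set; let $\lambda$ be the frequency attached to $h$ by the scaling of Corollaries \ref{cor:eigencpt}, \ref{cor:eigensch}. Applying Proposition \ref{prop:sharp} with $\varphi$ chosen even, and extracting from its $TT^{*}$ reduction the $L^{2}$-normalized maximizer $f(h)$ (which is proportional to $\widecheck\varphi(h^{-1}p^w(x,hD))\chi^w(x,hD)$ applied to the measure $f_{h}\,ds$ on $\gamma$), we obtain — choosing $\chi\equiv 1$ on a neighborhood of $\{p=0\}$ and supported in a fixed compact set — a family $g(h):=\chi^w(x,hD)\widecheck\varphi(h^{-1}p^w(x,hD))f(h)$ with $\|g(h)\|_{L^{2}}\lesssim 1$ and $\|g(h)\|_{L^{q}(\gamma)}\gtrsim h^{-\rho(q,\sigma)}$ (resp. $\gtrsim_{\varepsilon}h^{-1/4+\varepsilon}$ if $\sigma=\infty$). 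Since $\varphi$ is supported near $0$, $\widecheck\varphi(h^{-1}p^w)$ only involves short-time propagation, on which $p^w$ may be replaced by $\tilde P$ modulo $O_{\mathcal S\to\mathcal S}(h^{\infty})$ on microlocalized inputs, so $g(h)=\chi^w\widecheck\varphi(h^{-1}\tilde P)f(h)+O_{\mathcal S}(h^{\infty})$.

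Next, fix a large integer $R$ and partition the spectrum: $g(h)=\sum_{|n|\le R}g_{n}(h)+g_{\mathrm{far}}(h)+O_{\mathcal S}(h^{\infty})$, where $g_{n}(h):=\chi^w\widecheck\varphi(h^{-1}\tilde P)\mathds 1_{[n,n+1)}(h^{-1}\tilde P)f(h)$ and $g_{\mathrm{far}}(h)$ carries the eigenvalues with $|h^{-1}E|\ge R+1$. For $|n|\le R$ the associated $\sqrt{-\Delta_g}$-frequencies lie in an $O(1)$-window about the characteristic surface, where $\chi^w$ is the identity modulo $O(h^{\infty})$; hence $g_{n}(h)$ lies, mod $O(h^{\infty})$, in the range of $\mathds 1_{[n,n+1)}(h^{-1}\tilde P)$ and has $L^{2}$-norm $\lesssim 1$. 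To estimate $g_{\mathrm{far}}(h)$, decompose it dyadically by the parameter $m\gtrsim R$ with $|\sqrt{-\Delta_g}\text{-eigenvalue}-\lambda|\in[m,m+1)$: the block with parameter $m$ has $L^{2}$-norm $\lesssim_{N}m^{-N}\|f_{m}\|_{2}$ by the rapid off-resonance decay of $\widecheck\varphi$, with $\sum_{m}\|f_{m}\|_{2}^{2}\le 1$, and for $m\lesssim\lambda$ — where $\chi^w$ again acts trivially — it is a unit spectral cluster about $\lambda+m\sim\lambda$, so Theorem \ref{thm:main}/Corollary \ref{cor:eigencpt} bounds its $L^{q}(\gamma)$-norm by $\lesssim_{N}\lambda^{\rho(q,\sigma)}m^{-N}\|f_{m}\|_{2}$ (the blocks with $m\gtrsim\lambda$ are $O(h^{\infty})$ or crudely negligible). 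A Cauchy--Schwarz summation in $m$ then gives $\|g_{\mathrm{far}}(h)\|_{L^{q}(\gamma)}\lesssim_{N}R^{1/2-N}\lambda^{\rho(q,\sigma)}$.

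When $\sigma<\infty$ one has $\lambda^{\rho(q,\sigma)}\sim h^{-\rho(q,\sigma)}$, so fixing $R$ large forces $\|g_{\mathrm{far}}(h)\|_{L^{q}(\gamma)}\le\tfrac12\|g(h)\|_{L^{q}(\gamma)}$, whence $\big\|\sum_{|n|\le R}g_{n}(h)\big\|_{L^{q}(\gamma)}\gtrsim h^{-\rho(q,\sigma)}$; by the triangle inequality some $n=n(h)\in\{-R,\dots,R\}$ has $\|g_{n(h)}(h)\|_{L^{q}(\gamma)}\gtrsim_{R}h^{-\rho(q,\sigma)}$, and since $g_{n(h)}(h)$ lies mod $O(h^{\infty})$ in the range of $\mathds 1_{[n(h),n(h)+1)}(h^{-1}P(h))$ with $L^{2}$-norm $\lesssim 1$, and $\|\mathds 1_{[n,n+1]}(h^{-1}P)\|_{L^{2}\to L^{q}(\gamma)}\ge\|\mathds 1_{[n,n+1)}(h^{-1}P)\|_{L^{2}\to L^{q}(\gamma)}$, this yields \eqref{e:lbeign}. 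For $\sigma=\infty$ the tail bound $R^{1/2-N}\lambda^{1/4}$ no longer beats the main term $\lambda^{1/4-\varepsilon}$, so one argues directly: by Lemma \ref{lem:finite}, $\sigma=\infty$ forces a point $\gamma(t_{0})$ at which the bicharacteristic through $(\gamma(t_{0}),\xi(t_{0}))$ — a great circle on $\mathbb S^{2}$, resp. a closed harmonic-oscillator orbit inside $\{|x|<1\}$ — osculates $\gamma$ to infinite order, and one uses the corresponding highest-weight eigenfunction (the rotated spherical harmonic $Y_{k}^{k}$ with $\lambda=\sqrt{k(k+1)}$, resp. the suitably placed angular-momentum eigenstate of the two-dimensional harmonic oscillator): it is $L^{2}$-normalized, lies in a spectral window of $h^{-1}P(h)$ of bounded index, and concentrates in a $\lambda^{-1/2}$-tube about that orbit with amplitude $\sim\lambda^{1/4}$, and since $\gamma$ lies in this tube over an arc of length $\gtrsim_{\varepsilon}\lambda^{-\varepsilon}$ its $L^{q}(\gamma)$-norm is $\gtrsim\lambda^{1/4-\varepsilon}\sim h^{-1/4+\varepsilon}$. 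The central difficulty is precisely the control of $g_{\mathrm{far}}(h)$ — a priori the off-resonance tail of the quasimode could contribute to the $L^{q}(\gamma)$-norm as much as its resonant part, and it is the interplay of the decay of $\widecheck\varphi$ with the already proven sharp unit-cluster estimate that rescues the scheme; $\sigma=\infty$ is genuinely exceptional because there the quasimode matches the unit-cluster bound only up to an $h^{\varepsilon}$-loss, which is why the explicit highest-weight construction is unavoidable.
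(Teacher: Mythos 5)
Your handling of the case $\sigma<\infty$ is, in substance, the paper's argument: one decomposes into unit spectral windows of $h^{-1}P(h)$, uses the rapid off-resonance decay $\|\mathds 1_{[n,n+1]}(h^{-1}P)\widecheck\varphi(h^{-1}P)\|_{L^2\to L^2}\lesssim(1+|n|)^{-N}$ together with the already-proven sharp upper bound \eqref{e:eigencpt} to absorb the far tail, and then pigeonholes over the $O(R)$ remaining blocks. The paper packages this slightly differently: it first strips off the microlocal cutoff $\chi^w$ entirely (using that $\chi^w_{P,\circ}$ is a multiplier, a Sobolev estimate for the symbolic remainder, and the ellipticity of $p$ at high frequencies to kill $(1-\chi^w_{F,\circ})\widecheck\varphi(h^{-1}P)$), which reduces \eqref{e:lbfh} to the operator lower bound $\|\widecheck\varphi(h^{-1}P)\|_{L^2\to L^q(\gamma)}\gtrsim h^{-\rho(q,\sigma)}$; then it runs the spectral decomposition at the level of the operator, following the scheme of \cite[Section 5.2]{BGT04}. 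You instead decompose the extremizing function $g(h)$ itself. The two routes are equivalent, and either gives the result for $\sigma<\infty$.

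On $\sigma=\infty$ you flag a real subtlety: since the main term $C_\varepsilon h^{-1/4+\varepsilon}$ is \emph{smaller} by $h^\varepsilon$ than the unit-cluster upper bound $h^{-1/4}$, no $h$-independent choice of $R$ makes the far tail $R^{1/2-N}h^{-1/4}$ subordinate to the main term, so the pigeonhole with fixed $R$ does not close. The paper waves this away with the claim that ``the other cases can be addressed by using the same argument,'' which does not literally hold. However, your remedy — the highest-weight spherical harmonic $Y_k^k$ (resp.\ an angular-momentum eigenstate of the $2$D harmonic oscillator) concentrating on a periodic bicharacteristic — is only available for $M=\mathbb S^2$ and for $-h^2\Delta+|x|^2-1$, where bicharacteristics are closed; Proposition \ref{prop:sharpeign} is stated for an \emph{arbitrary} smooth compact $2$-dimensional $M$, whose geodesics are generically not closed, so in that generality there is no highest-weight eigenfunction to invoke. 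Your argument therefore proves the proposition only in the special cases actually needed for the optimality of Corollaries \ref{cor:eigencpt} and \ref{cor:eigensch} (which do concern $\mathbb S^2$ and the Hermite operator), but not as stated. You should either weaken the statement for $\sigma=\infty$ (e.g.\ allow $R$ to grow like $h^{-c\varepsilon}$, which the pigeonhole does yield) or restrict to the two concrete model cases.
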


\begin{proof}[Proof that Proposition \ref{prop:sharpeign} implies sharpness of Corollary \ref{cor:eigencpt}, \ref{cor:eigensch}]
    We assume $\sigma<\infty$ because the proof for $\sigma = \infty$ follows the same argument.
    We first suppose that $P(h) = -h^2\Delta_g-1$ on $\mathbb S^2$. Note that \eqref{e:lbeign} is equivalent to
    \[
    \|\mathds 1_{[h^{-2}+nh^{-1}, h^{-2}+(n+1)h^{-1}]}(-\Delta_g)\|_{L^2\to L^q(\gamma)}\ge C h^{-\rho(q,\sigma)},\quad 2\le q\le \infty.
    \]
    It is known that the eigenvalues of $-\Delta_g$ are $\{k(k+1): k\in\N_0\}$. From this, it follows that the interval $[h^{-2}+nh^{-1}, h^{-2}+(n+1)h^{-1}]$ in the above contains only one eigenvalue. Thus there exists a sequence $(\lambda_n)_{n=1}^\infty$ of eigenvalues of $\sqrt{-\Delta_g}$ such that
    \[
    \|\mathds 1_{[\lambda_n,\lambda_n+1]}(\sqrt{-\Delta_g})\|_{L^2\to L^q(\gamma)}\ge C \lambda_n^{\rho(q,\sigma)},\quad 2\le q\le \infty,
    \]
    which establishes optimality in the case of $\mathbb S^2$.

    Now we assume $P(h) = -h^2\Delta + |x|^2 - 1$ on $\R^2$. By the standard rescaling $x= h^{1/2}x'$, $\xi= h^{-1/2}\xi'$, \eqref{e:lbeign} is equivalent to
    \[
    \|\mathds 1_{[h^{-1}+n, h^{-1}+n+1]}(H)\|_{L^2\to L^q(\lambda^{1/2}\gamma)}\ge C h^{-\rho(q,\sigma)},\quad 2\le q\le \infty.
    \]
    Since the spectrum of $H$ consists of $2k+2$, $k\in \N_0$, the interval $[h^{-1}+n, h^{-1}+n+1]$ has at most one eigenvalue. Hence the assertion for optimality follows.
\end{proof}

\begin{proof}[Proof of Proposition \ref{prop:sharpeign}]
We only consider the case where $\sigma < \infty$ and $P(h) = -h^2\Delta_g-1$ on $M$. The other cases can be addressed by using the same argument as below. 

Note that Proposition \ref{prop:sharp} gives that, for $2\le q\le \infty$ and $\varphi\in C_c^\infty(\R)$ as in Proposition \ref{prop:sharp},
\begin{align}\label{e:lb2qest}
    \|\chi^w(x,hD)\widecheck\varphi(h^{-1}P(h))\|_{L^2\to L^q(\gamma)}\ge C h^{-\rho(q,\sigma)},\quad h\in (0,h_0].
\end{align}

We now take $\chi(x,\xi) = \chi_{P,\circ}(x)\chi_{F,\circ}(\xi)$, where $\chi_{P,\circ}\in C_c^\infty(M)$ and $\chi_{F,\circ}\in C_c^\infty(\R^2)$ such that $\chi_{F,\circ}\equiv 1$ on $\{\xi: |\xi|\le K\}$ for a sufficiently large $K>0$. By the composition rule, we have $\big(\chi_{P,\circ}\chi_{F,\circ}\big)^w(x,hD) = \chi^w_{P,\circ}(x,hD)\chi^w_{F,\circ}(x,hD) + h r^w(x,hD)$, where $r\in C_c^\infty(T^*M)$. Using the Sobolev inequality, we deduce that the term $h r^w(x,hD)$ on the right-hand side is an error. In fact, making a change of variables, we may assume that $\gamma = \{x_1 = 0\}$. Then by the Sobolev inequality,
\[
\|r^w(x,hD)f\|_{L^q(\gamma)}\le \|r^w(x,hD)f\|_{L^\infty_{x_1}L^q_{x_2}}\lesssim h^{\frac1q-1} \|f\|_{L^2(M)}.
\]
Hence $\|h r^w(x,hD)f\|_{L^2\to L^q(\gamma)}$ is bounded by $C h^{1/q}$, which is much smaller than $h^{-\rho(q,\sigma)}$. 

Note that 
\[
\chi^w_{P,\circ}(x,hD)f = \chi_{P,\circ}(x)f(x),\quad \chi^w_{F,\circ}(x,hD) f(x) = \chi_{F,\circ}(hD)f(x),
\]
thus from \eqref{e:lb2qest}, it follows that
\[
\|\chi_{F,\circ}(hD)\widecheck\varphi(h^{-1}P(h))\|_{L^2\to L^q(\gamma)}\ge C h^{-\rho(q,\sigma)},\quad h\in (0,h_0].
\]
To proceed, for $j\ge \log_2 K$, define $\chi_{F,j}$ to be a dyadic cutoff functions on $\R^2$ such that $\supp(\chi_{F,j})\subset\{2^{j-1}\le |\xi|\le 2^{j+1}\}$ and $\sum_{j\ge \log_2 K} \chi_{F,j}(\xi) = 1$ for $|\xi|\ge K$. Using the triangle inequality, we estimate
\begin{align*}
    \|\chi^w_{F,\circ}(x,hD)&\widecheck\varphi(h^{-1}P(h))\|_{L^2\to L^q(\gamma)} \le \|\widecheck\varphi(h^{-1}P(h)\|_{L^2\to L^q(\gamma)}\\
    &\qquad+ \sum_{j\ge \log_2 K} \big\|\chi^w_{F,j}(x,hD)(1-\chi^w_{F,\circ}(x,hD))\widecheck\varphi(h^{-1}P(h))\big\|_{L^2\to L^q(\gamma)}.
\end{align*}
We prove that the sum on the right-hand side is an error. For that, observe that for $j\ge \log_2 K$, the symbol $|\xi|_g^2-1$ of $-h^2\Delta_g-1$ satisfies the bound $|p(x,\xi)|\gtrsim 2^{2j}$ for $(x,\xi)\in \{2^{j-1}\le |\xi|\le 2^{j+1}\}$ if $K>0$ is sufficiently large. Thus we have
\begin{align*}
    \big\|\chi^w_{F,j}(x,hD)(1-\chi^w_{F,\circ}(x,hD))\widecheck\varphi(h^{-1}P(h)) f\big\|_{L^2}&\lesssim 2^{-2j}\|P(h) \widecheck\varphi(h^{-1}P(h)) f\|_{L^2} \\
    &\lesssim 2^{-2j}h\|f\|_{L^2}.
\end{align*}
Combining with the Sobolev inequality, we get
\begin{align*}
    \big\|\chi^w_{F,j}(x,hD)(1-\chi^w_{F,\circ}(x,hD))\widecheck\varphi(h^{-1}P(h)) f\big\|_{L^q(\gamma)}&\lesssim h2^{-2j} (h^{-1}2^j)^{1-\frac1q}\|f\|_{L^2},
\end{align*}
implying
\begin{align*}
    \sum_{j\ge \log_2 K} \big\|\chi^w_{F,j}(x,hD)(1-\chi^w_{F,\circ}(x,hD))\widecheck\varphi(h^{-1}P(h))\big\|_{L^2\to L^q(\gamma)} \lesssim h^{\frac1q},
\end{align*}
where the resulting bound is sufficiently small compared to $h^{-\rho(q,\sigma)}$.
Consequently, we have
\begin{align}\label{e:lb2qvarp}
    \|\widecheck\varphi(h^{-1}P(h))\|_{L^2\to L^q(\gamma)}\ge C h^{-\rho(q,\sigma)},\quad h\in (0,h_0].
\end{align}
The smoothed projection operator $\widecheck\varphi(h^{-1}P(h))$ in the above can be replaced by $\mathds 1_{[n,n+1]}(h^{-1}P(h))$ for some $n\in\N$. Indeed, we can prove that for $h\in (0,h_0]$, there exists $n = n(h)\in \N$ such that $-R\le n\le R$ for $R>0$ independent of $h$ and
\begin{align}\label{e:lb2qproj}
    \|\mathds 1_{[n,n+1]}(h^{-1}P(h))\|_{L^2\to L^q(\gamma)}\ge C h^{-\rho(q,\sigma)},\quad 2\le q\le \infty,
\end{align}
which completes the proof of the proposition. The implication from \eqref{e:lb2qvarp} to \eqref{e:lb2qproj} can be shown by following an argument in \cite[Section 5.2]{BGT04}, but we provide its proof here for completeness.

We write
\[
\widecheck\varphi(h^{-1}P(h)) = \sum_{n\in \Z} \mathds 1_{[n,n+1]}(h^{-1}P(h))\mathds 1_{[n,n+1]}(h^{-1}P(h))\widecheck\varphi(h^{-1}P(h)).
\]
Note that
\[
\big\|\mathds 1_{[n,n+1]}(h^{-1}P(h))\widecheck\varphi(h^{-1}P(h))\big\|_{L^2\to L^2}\lesssim (1+|n|)^{-N},\quad n\in \Z
\]
for a large natural number $N$. Combining with the triangle inequality and \eqref{e:lb2qvarp}, we obtain
\[
\sum_{n\in \Z}\,(1+|n|)^{-N} \big\|\mathds 1_{[n,n+1]}(h^{-1}P(h))\big\|_{L^2\to L^q(\gamma)}\ge C h^{-\rho(q,\sigma)},\quad 2\le q\le \infty.
\]
From this and the upper bound \eqref{e:eigencpt}, it follows that there exists a constant $R>0$ independent of $h$ such that
\[
\sum_{|n|\le R}\, \big\|\mathds 1_{[n,n+1]}(h^{-1}P(h))\big\|_{L^2\to L^q(\gamma)}\ge C h^{-\rho(q,\sigma)},\quad 2\le q\le \infty.
\]
The desired assertion \eqref{e:lb2qproj} now follows from the Pigeonhole principle.
\end{proof}

\end{document}